\newtheorem{thm}{Theorem}[section]
\newtheorem{rem}{Remark}
\newtheorem{exa}{Example}
\newcommand{\bR}{{\mathbb R}}
\newtheorem{myDef}{Definition}[section]
\newcommand{\bI}{\mathbb{I}}
\newcommand{\nn}{\nonumber}
\newcommand{\mS}{\mathcal{S}}
\def\epsilon{\varepsilon} 
\newcommand{\mat}[1]{\boldsymbol{#1}}
\begin{document}
\begin{frontmatter}
\title{
{Error analysis for temporal second-order finite element approximations of axisymmetric mean curvature flow of genus-1 surfaces}
}

\author[1]{Meng Li}
\address[1]{School of Mathematics and Statistics, Zhengzhou University,
Zhengzhou 450001, China}
\ead{limeng@zzu.edu.cn}
\author[1]{Lining Wang}
\author[1]{Yiming Wang}

\begin{abstract}
Existing studies on the convergence of numerical methods for curvature flows primarily focus on first-order temporal schemes. In this paper, we establish a novel error analysis for parametric finite element approximations of genus-1 axisymmetric mean curvature flow, formulated using two classical second-order time-stepping methods: the Crank-Nicolson method and the BDF2 method.
Our results establish optimal error bounds in both the \(L^2\)-norm and \(H^1\)-norm, along with a superconvergence result in the \(H^1\)-norm for each fully discrete approximation. Finally, we perform convergence experiments to validate the theoretical findings and present numerical simulations for various genus-1 surfaces. Through a series of comparative experiments, we also demonstrate that the methods proposed in this paper exhibit significant mesh advantages.
\end{abstract} 

\begin{keyword} 
Mean curvature flow, parametric finite element method, Crank-Nicolson method, BDF2 method, convergence
\end{keyword}

\end{frontmatter}

\section{Introduction}\setcounter{equation}{0}
Mean curvature flow is one of the most fundamental and widely studied geometric evolution equations, governing the motion of surfaces driven by their mean curvature. 
This flow naturally arises in various physical and geometric contexts, including the evolution of soap films, surface smoothing in image processing, and phase transition modeling in materials science. 
Let $\{\mS(t)\}_{t\in[0,T]}\subset\bR^3$ be a family of smooth, oriented, and closed hypersurfaces. The motion by mean curvature flow is given by
\begin{equation}\label{eq:model1}
	\mathcal{V}_{\mS} = \kappa_m, 
\end{equation}
where $\mathcal{V}_{\mS}$ denotes the velocity of the surface $\mS(t)$ in the direction of the unit normal $\mat n_{_\mS}$, and $\kappa_m$ is the mean curvature of $\mS(t)$, defined as the sum of its principal curvatures. For a comprehensive introduction to mean curvature flow and its key results, we refer to \cite{mantegazza2011lecture}.
Over the past few decades, a wide range of numerical methods have been developed for approximating mean curvature flow. The use of parametric finite element methods (FEMs) for two-dimensional surfaces traces back to the pioneering work of Dziuk \cite{Dziuk90}. 
Since then, various alternative approaches have been proposed, including those in \cite{Barrett08JCP, m2017approximations} and the references therein.

Despite significant advancements in numerical methods for mean curvature flow and related flows, convergence analysis remains highly challenging. 
The convergence of certain semidiscrete and fully discrete parametric FEMs for the mean curvature flow and Willmore flow of curves has been established by Dziuk \cite{dziuk1994convergence}, Deckelnick and Dziuk \cite{deckelnick1995approximation,deckelnick2009error}, Bartels \cite{bartels2013simple}, Li \cite{li2020convergence,bai2024convergence}, and Ye and Cui \cite{ye2021convergence}, among others.
Furthermore, the convergence of numerical schemes for the mean curvature flow and Willmore flow of closed surfaces has been investigated by Dziuk and Elliott \cite{dziuk2007finite}, Kov{'a}cs et al. \cite{kovacs2017convergence,Kovacs2019,kovacs2021convergent}, Li \cite{li2021convergence}, Barrett et al. \cite{barrett2021finite}, Deckelnick and N{\"u}rnberg \cite{deckelnick2021error}, Elliott et al. \cite{elliott2022numerical}, Hu and Li \cite{Hu22evolving}, Bai and Li \cite{bai2023erratum}, and Li \cite{li2025error}, among others.
We note that existing convergence analyses for the fully discrete schemes are largely limited to first-order time-stepping methods. 
Very recently, the first second-order error analysis for curve shortening flow and curve diffusion was presented in \cite{deckelnick2025second}. 
In this work, we focus on constructing and analyzing the convergence of second-order parametric FEMs, including the Crank-Nicolson (CN) method and the two-step backward differentiation formula (BDF2) method, for a special type of three-dimensional mean curvature flow. This represents a significant innovation and contribution to the field. It is worth emphasizing that our work appeared about one month after the recent study \cite{deckelnick2025second}; however, the numerical methods and the problems addressed in the two works are fundamentally different.

\begin{figure}[!htp]
	\centering
	\includegraphics[width=0.9\textwidth]{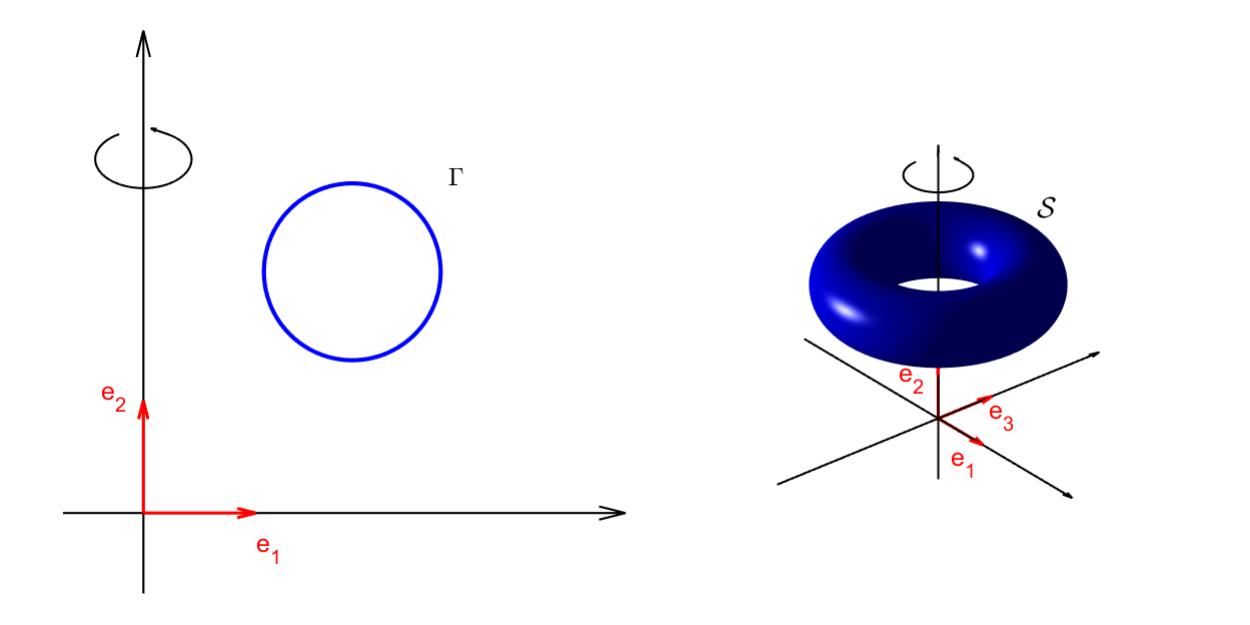}
	\caption{Sketch of $\Gamma$ and $\mS$, as well as the unit vectors $\mat e_1$, $\mat e_2$ and $\mat e_3$.}
	\label{figure:torus}
\end{figure}

In many practical scenarios, evolving three-dimensional surfaces often exhibit rotational symmetry (see Fig. \ref{figure:torus}). 
This symmetry property enables a significant simplification of geometric flow by reducing the problem to a one-dimensional setting, as demonstrated in several studies \cite{Bernoff1998, Deckelnick03, Zhao19, Barrett19, Barrett2019variational, Barrett2021stable, BGNZ21volume}. Such a reduction not only drastically reduces computational complexity but also eliminates the need for sophisticated mesh control techniques, as the focus shifts to the one-dimensional generating curve of the axisymmetric surfaces. In this work, due to theoretical limitations, we only consider the mean curvature flow with genus-1 axisymmetric structure. Specifically, we denote $\bI=\mathbb R/\mathbb Z$ with $\partial \bI=\emptyset$. Let $\mat x(t): \overline{\bI}\rightarrow \mathbb R_{>0}\times \mathbb R$
 parameterize $\Gamma(t)$, which is a generating curve of a torus surface $\mS(t)$ that is axisymmetric with respect to the $x_2$-axis. The mean curvature flow  \eqref{eq:model1} with genus-1 axisymmetry can be formulated as 
 \begin{align}\label{eqn:model2} \mat x_t\cdot\mat{\nu}=\varkappa-\frac{\mat{\nu}\cdot\mat e_1}{\mat x\cdot\mat e_1},\quad \varkappa=\frac{1}{|\mat x_\rho|}\left(\frac{\mat x_\rho}{|\mat x_\rho|}\right)_\rho\cdot\mat{\nu}\qquad \text{in}\quad \bI\times (0, T],
 \end{align}
where $\mat{\nu}$ denotes the outer unit normal vector of the curve $\Gamma(t)$. It is obvious that the system 
\begin{align}\label{eqn:model3}
	\mat x_t=\frac{1}{|\mat x_\rho|}\left(\frac{\mat x_\rho}{|\mat x_\rho|}\right)_\rho-\frac{\mat{\nu}\cdot\mat e_1}{\mat x\cdot\mat e_1}\mat{\nu}
	=\frac{\mat x_{\rho\rho}-\left(\mat x_{\rho\rho}\cdot\mat{\tau}\right)\mat{\tau}}{|\mat x_\rho|^2}-\frac{\mat{\nu}\cdot\mat e_1}{\mat x\cdot\mat e_1}\mat{\nu}
\end{align}
satisfies \eqref{eqn:model2}, where $\mat{\tau}$ is the tangential vector of $\Gamma(t)$. Furthermore, since $\mat x_t\cdot\mat{\tau}=0$, the system \eqref{eqn:model3} is degenerate in the tangential direction, and 
the mesh quality of the corresponding discretization may deteriorate over time. 
One way to address this problem is to design a scheme based on the 
DeTurck’s trick \cite{deckelnick1995approximation,m2017approximations}. 
Specifically, we introduce an additional tangential motion to remove the degeneracy. The updated system is 
\begin{align}\label{eqn:model4}
	\mat x_t-\frac{\mat x_{\rho\rho}}{|\mat x_\rho|^2}+\frac{\mat{\nu}\cdot\mat e_1}{\mat x\cdot\mat e_1}\mat{\nu}=\mat 0, 
\end{align}
which is equivalent to the following system: 
\begin{align}\label{eqn:model}
	\mat x\cdot \mat e_1|\mat x_\rho|^2\mat x_t-\left(\mat x\cdot\mat e_1\mat x_\rho\right)_\rho+\left|\mat x_\rho\right|^2\mat e_1 =\mat 0. 
\end{align}
In a previous study by Barrett et al. \cite{barrett2021finite}, the authors established detailed convergence analysis for the temporal first-order numerical approximation of the axisymmetric system \eqref{eqn:model}.
 Building upon their work, we in this  study develop two types of temporal second-order approximations for the system \eqref{eqn:model} and rigorously analyze the convergence of the fully discretized schemes. 
 We obtain optimal convergence results in both the $L^2$-norm and the 
 $H^1$-norm, as well as establish a superconvergence result in the sense of $H^1$-norm. 
 Recently, several second-order time-stepping methods have been proposed for solving curvature flow problems \cite{jiang2024second,Jiang24,li2024efficient,guo2025structure}; however, none of these works have provided a convergence analysis.

The outline of the paper is as follows. In Section \ref{sec2}, we build the temporal second-order CN method and BDF2 method, and also present the main convergence results of this paper. 
In Section \ref{sec3}, we provide a detailed proof of the error estimate for the CN method. In Section \ref{sec4}, we supply the error estimate for the BDF2 method. 
In Section \ref{sec5}, we conduct numerical experiments to validate the robustness and accuracy of the proposed numerical schemes, as well as to explore interesting phenomena in differential geometry. 
To check the mesh quality of the CN method and the BDF2 method, we also conduct a series of comparative experiments in this section.
Finally, in Section \ref{sec6}, we summarize our findings and draw conclusions.
In the following sections, we denote the \( L^2 \)-inner product on \( \bI \) by \( (\cdot, \cdot) \). For \( l \in \mathbb{N}_0 \) and \( p \in [1, \infty] \), we use \( W^{l, p}(\bI) \) to represent the Sobolev space equipped with the norm \( \|\cdot\|_{W^{l, p}} \) and the seminorm \( |\cdot|_{W^{l, p}} \). In the special case of \( p = 2 \), we adopt the simplified notation \( H^l(\bI) := W^{l, 2}(\bI) \), with the corresponding norm and seminorm given by \( \|\cdot\|_{H^l} := \|\cdot\|_{W^{l, 2}} \) and \( |\cdot|_{H^l} := |\cdot|_{W^{l, 2}} \), respectively. For convenience, we also define vector-valued Sobolev spaces as \( \mat W^{l, p}(\bI) := \left[W^{l, p}(\bI)\right]^2 \) and \( \mat H^l(\bI) := \left[H^l(\bI)\right]^2 \). During the theoretical analysis, \( C > 0 \) is a bounded constant independent of the time step \( \Delta t \) and the spatial step \( h \), and it may have different values in different places.

\section{Temporal second-order schemes and main results}
\label{sec2}
\setcounter{equation}{0}

A weak formulation for \eqref{eqn:model} is given as follows: for $\mat x(0)\in \mat H^1(\bI)$, to find $\mat x(t)\in \mat H^1(\bI)$, such that 
\begin{align}\label{eqn:wf}
	\left(\mat x\cdot \mat e_1\mat x_t, \mat \eta\left|\mat x_\rho\right|^2\right)
	+\left(\mat x\cdot\mat e_1\mat x_\rho, \mat \eta_\rho\right)
	+\left(\mat \eta\cdot\mat e_1, \left|\mat x_\rho\right|^2\right) = 0,\qquad \forall \mat\eta\in \mat H^1(\bI). 
\end{align}

Let $\bI=\bigcup_{j=1}^J\bI_j$, $J\geq 3$, with $\bI_j=\left[q_{j-1}, q_j\right]$, $q_j=jh=j/J$ for $j=0, \ldots, J$, and we identity $0=q_0=q_J=1$. 
Then, we define the finite element spaces 
\begin{align*}
	V^h:=\left\{\chi\in C(\bar{\bI})\cap H^1(\bI): \chi\bigg|_{\mathbb I_j} \text{is affine,} j = 1, \ldots, J\right\},\qquad \mat V^h:=V^h\times V^h. 
\end{align*}
For temporal discretization, we define \( t_m = m \Delta t \) for \( m = 0, \ldots, M \), where \(\Delta t = T / M\) is the uniform time step size. 

To derive the following CN method, for a sequence of vector functions $\mat f^m:=\mat f(t_m)$, we denote
\begin{align*}
	D_t\mat f^{m+\frac12}:=\frac{\mat f^{m+1}-\mat f^m}{\Delta t},\qquad \widehat{\mat f}^{m+\frac12}:=\frac{\mat f^{m+1}+\mat f^m}{2},\qquad \widetilde{\mat f}^{m+\frac12}:=\frac{3\mat f^{m}-\mat f^{m-1}}{2}. 
\end{align*}
Then, we define the following CN method of the weak formulation \eqref{eqn:wf}. 
\begin{myDef}\label{def:cn}
	(\textbf{CN method}) For given $\mat X^0, \mat X^1\in \mat V^h$, find $\mat X^{m+1}$, $m=1, \ldots, M-1$, such that 
	\begin{align}\label{eqn:cn}
		\left(\widetilde{\mat X}^{m+\frac12} \cdot \mat e_1D_t\mat X^{m+\frac12}, \mat \eta^h\left|\widetilde{\mat X}^{m+\frac12}_\rho\right|^2\right)
		+\left(\widetilde{\mat X}^{m+\frac12} \cdot \mat e_1\widehat{\mat X}^{m+\frac12}_\rho, \mat \eta^h_\rho\right)
		+\left(\mat \eta^h\cdot\mat e_1, \left|\widetilde{\mat X}^{m+\frac12}_\rho\right|^2\right) = 0,\qquad \forall \mat\eta^h\in \mat V^h. 
	\end{align}
\end{myDef}

Next, to derive the following BDF2 method, we denote 
\begin{align*}
	\mathbb D_t\mat f^{m+1}:=\frac{3\mat f^{m+1}-4\mat f^m+\mat f^{m-1}}{2\Delta t},\qquad 
	\overline{\mat f}^{m+1} := 2\mat f^m-\mat f^{m-1}. 
\end{align*}
The following definition gives the BDF2 method of the weak formulation \eqref{eqn:wf}. 
\begin{myDef}\label{def:bdf}
	(\textbf{BDF2 method}) For given $\mat X^0, \mat X^1\in \mat V^h$, find $\mat X^{m+1}$, $m=1, \ldots, M-1$, such that 
	\begin{align}\label{eqn:bdf}
		\left(\overline{\mat X}^{m+1} \cdot \mat e_1\mathbb D_t\mat X^{m+1}, \mat \eta^h\left|\overline{\mat X}^{m+1}_\rho\right|^2\right)
		+\left(\overline{\mat X}^{m+1} \cdot \mat e_1\mat X^{m+1}_\rho, \mat \eta^h_\rho\right)
		+\left(\mat \eta^h\cdot\mat e_1, \left|\overline{\mat X}^{m+1}_\rho\right|^2\right) = 0,\qquad \forall \mat\eta^h\in \mat V^h. 
	\end{align}
\end{myDef}

We define the standard interpolation operator $\Pi^h: C(\bar{\bI})\rightarrow V^h$, such that 
\begin{align}\label{eqn:inter_error}
	\left\|f-\Pi^hf\right\|_{W^{k, p}}\leq Ch^{l-k}\left|f\right|_{W^{l, p}},\qquad \forall f\in W^{l, p}(\bI),
\end{align}
where $k\in\{0, 1\}$, $l\in \{1, 2\}$ and $p\in [2, \infty]$. 
For $f\in L^1(\bI)$, we further define 
\begin{align}\label{eqn:ph}
	\left(P^hf\right)\bigg|_{\bI_j}:=\frac{1}{h}\int_{\bI_j}fd\rho,\qquad j=1,\ldots, J, 
\end{align}
which satisfies that for $p\in [2, \infty]$,  
\begin{align}\label{eqn:ph_error}
	\left\|f-P^hf\right\|_{W^{0, p}}\leq Ch\left|f\right|_{W^{1, p}} ,\qquad \forall f\in W^{1, p}(\bI). 
\end{align}

\begin{rem}

We notice that in order to solve the CN method and the BDF2 method, the values of \(\mat{X}^0\) and \(\mat{X}^1\) must be known in advance. We can compute \(\mat{X}^0 = \Pi^h \mat{x}^0\), and \(\mat{X}^1\) can be obtained using the BDF1 method in \cite{barrett2021finite}. Although BDF1 is a first-order method, it is only used for the first step. Since theoretical analysis does not rely on Gronwall’s inequality at this stage, there is no error accumulation or order reduction, ensuring that \(\mat{X}^1\) still achieves second-order accuracy. Consequently, this does not affect the convergence results of subsequent time steps. Of course, other methods could also be used to ensure second-order accuracy at the first step.
For simplicity, we only consider the convergence order of the CN and BDF2 methods in our theoretical analysis.  
\end{rem}

The following theorem presents the main convergence results of this paper. 
\begin{thm}\label{thm:main}
	Suppose that \eqref{eqn:model} has a solution $\mat x(\rho, t): \bI\times [0, T]\rightarrow \mathbb R^2$, satisfying that 
	\begin{align}\label{eqn:assum2}
		\mat x\in C\left(\left[0, T\right]; \left[W^{2, \infty}(\bI)\right]^2\right),~ 
		\mat x_t\in C\left(\left[0, T\right]; \left[H^2(\bI)\right]^2\right),~
		\mat x_{tt}\in C\left(\left[0, T\right]; \left[H^2(\bI)\right]^2\right),~ \mat x_{ttt}\in C\left(\left[0, T\right]; \left[L^2(\bI)\right]^2\right), 
	\end{align}
	as well as 
	\begin{align}\label{eqn:assum1x}
		\left|\mat x_\rho\right|>0,\qquad \mat x\cdot\mat e_1>0\qquad \text{in}\quad \bar{\bI}\times [0, T]. 
	\end{align}
	Then there exist $\Delta t_0$, $h_0$, $\gamma_1$ and $\gamma_2$, such that when $0<h\leq h_0$, $0<\Delta t\leq \Delta t_0$, $\Delta t\leq \gamma_1\sqrt[4]{h}$ and $h\leq \gamma_2\sqrt{\Delta t}$, the CN method \eqref{eqn:cn} and the BDF2 method \eqref{eqn:bdf} have a unique solution $\mat X^{m+1}$, $m=1, \ldots, M-1$, such that 
	\begin{align}\label{eqn:err1}
		\max_{m=0,\ldots, M}\left\|\mat x^m-\mat X^m\right\|_{L^2}\leq C\left(\Delta t^2+h^2\right),\qquad 
		\max_{m=0,\ldots, M}\left|\mat x^m-\mat X^m\right|_{H^1}\leq C\left(\Delta t^2+h\right). 
	\end{align}
	In addition, we have the following superconvergence result:
	\begin{align}\label{eqn:err2}
		\max_{m=0,\ldots, M}\left\|\mat \Pi^hx^m-\mat X^m\right\|_{H^1}\leq C\left(\Delta t^2+h^2\right). 
	\end{align}
\end{thm}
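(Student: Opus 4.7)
The plan is to separate the total error $\mat x^m-\mat X^m$ into an interpolation part $\boldsymbol{\rho}^m := \mat x^m-\Pi^h\mat x^m$ and a finite element residual $\mat e^m := \Pi^h\mat x^m-\mat X^m\in\mat V^h$. The interpolation component is controlled directly by \eqref{eqn:inter_error}, so the substance of the argument is to establish the superconvergence bound $\|\mat e^m\|_{L^2}+|\mat e^m|_{H^1}\le C(\Delta t^2+h^2)$, which simultaneously yields \eqref{eqn:err2} and, combined with $\|\boldsymbol{\rho}^m\|_{L^2}=O(h^2)$, $|\boldsymbol{\rho}^m|_{H^1}=O(h)$, delivers \eqref{eqn:err1}. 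The whole estimate would be proved by induction on $m$, with the induction hypothesis providing both the smallness of $\mat e^k$ for $k\le m$ and the admissibility of the extrapolated coefficients in the scheme at step $m{+}1$.

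For the consistency step, I would insert $\Pi^h\mat x$ into \eqref{eqn:cn} or \eqref{eqn:bdf} and subtract the scheme itself to generate an error equation whose right-hand side collects the defect terms. Standard Taylor expansion centered at $t_{m+1/2}$ (for CN) or $t_{m+1}$ (for BDF2), combined with the regularity \eqref{eqn:assum2}, gives the $O(\Delta t^2)$ temporal defects, including the crucial identity $|\widetilde{\mat x}^{m+1/2}_\rho|^2=|\mat x^{m+1/2}_\rho|^2+O(\Delta t^2)$ for the extrapolated weight, and similarly $\overline{\mat x}^{m+1}=\mat x^{m+1}+O(\Delta t^2)$ for BDF2. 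Spatial defects carry the weighted piecewise constant $P^h$ projection of \eqref{eqn:ph}, for which \eqref{eqn:ph_error} and the orthogonality of $P^h$ against piecewise constants would be used to upgrade an apparent $O(h)$ contribution to $O(h^2)$ on the terms that multiply derivatives of finite element functions — this is precisely where superconvergence is earned.

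The stability step tests the error equation with a suitable multiplier. For CN I would test with $D_t\mat e^{m+1/2}$: the first bilinear form produces a coercive weighted $\|D_t\mat e^{m+1/2}\|^2_{L^2}$ provided $\widetilde{\mat X}^{m+1/2}\cdot\mat e_1$ stays positive, while the elliptic term, by the algebraic identity $(\widehat{\mat e}^{m+1/2}_\rho,\mat e^{m+1}_\rho-\mat e^m_\rho)=\tfrac12(|\mat e^{m+1}_\rho|^2-|\mat e^m_\rho|^2)$, telescopes a weighted $H^1$-seminorm. For BDF2 I would use the standard $G$-norm (or Nevanlinna–Odeh multiplier) identity to convert $(\mathbb D_t\mat e^{m+1},\mat e^{m+1})$ into a discrete derivative of a positive quadratic form. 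Summing in $m$, absorbing the defect contributions via Young's inequality and the consistency bounds, and applying a discrete Gronwall inequality would close the recursion. The $L^2$-bound on $\mat e^m$ follows either from a separate duality argument or, more simply, from the discrete Poincaré inequality once the $H^1$-seminorm is controlled.

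The hardest part, and where the mesh restrictions enter, will be closing the induction by verifying that the extrapolated quantities $\widetilde{\mat X}^{m+1/2}\cdot\mat e_1$, $|\widetilde{\mat X}^{m+1/2}_\rho|$ (resp.\ $\overline{\mat X}^{m+1}\cdot\mat e_1$, $|\overline{\mat X}^{m+1}_\rho|$) remain bounded above and below by constants depending only on the continuous solution. I would use the finite element inverse estimate $\|\chi^h\|_{W^{1,\infty}}\le Ch^{-1/2}\|\chi^h\|_{H^1}$ on $\mat e^k$; the inductive bound then yields
\begin{equation*}
\|\mat e^k\|_{W^{1,\infty}}\le Ch^{-1/2}(\Delta t^2+h^2)\le C(\Delta t^2 h^{-1/2}+h^{3/2}),
\end{equation*}
so the restriction $\Delta t\le\gamma_1\sqrt[4]{h}$ forces this quantity to be arbitrarily small, guaranteeing that $\mat X^k$ inherits the positivity \eqref{eqn:assum1x} from $\mat x^k$. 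The complementary constraint $h\le\gamma_2\sqrt{\Delta t}$ is needed when balancing spatial defects of size $h^2/\Delta t$ against the Gronwall constant, i.e.\ ensuring $h^2/\Delta t = O(\Delta t)$ so the spatial consistency can be absorbed. Once coefficient admissibility is secured, coercivity of the bilinear form in \eqref{eqn:cn}–\eqref{eqn:bdf} also supplies uniqueness of $\mat X^{m+1}$.
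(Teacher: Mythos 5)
Your proposal follows essentially the same route as the paper: the identical splitting into interpolation error and finite element residual, a simultaneous induction that both propagates the superconvergence bound and secures admissibility (positivity of $\widetilde{\mat X}^{m+\frac12}\cdot\mat e_1$, $\overline{\mat X}^{m+1}\cdot\mat e_1$ and two-sided bounds on the extrapolated tangent lengths via the inverse estimate under $\Delta t\leq\gamma_1\sqrt[4]{h}$), testing with $D_t\mat E^{m+\frac12}$ resp.\ $\mathbb D_t\mat E^{m+1}$, the telescoping/G-stability identities for the elliptic term, and — crucially — the $P^h$ projection together with the orthogonality $\int_{\bI_j}[f-\Pi^hf]_\rho\eta_\rho\,d\rho=0$ to upgrade the $O(h)$ spatial defect to $O(h^2)$. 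The role you assign to $h\leq\gamma_2\sqrt{\Delta t}$ is also the right one (it keeps quantities like $\|D_t\widetilde{\mat E}^{m+\frac12}\|_{H^1}\leq C\Delta t^{-1}(\Delta t^2+h^2)$ bounded), though it only needs $h^2/\Delta t=O(1)$, not $O(\Delta t)$ as you wrote.

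The one step that would fail as stated is your closing of the $L^2$-bound ``from the discrete Poincar\'e inequality once the $H^1$-seminorm is controlled.'' The domain is $\bI=\mathbb R/\mathbb Z$ with no boundary and no mean-zero constraint on $\mat E^m$, so the $H^1$-seminorm cannot control the $L^2$-norm: constants lie in the kernel. This matters because the right-hand side of the energy inequality genuinely contains full $H^1$-norms of $\mat E^m$ (e.g.\ from the $\mathbb T_{1,1}$- and $\mathbb T_{2,3}$-type terms), so Gronwall cannot be applied to the seminorm alone. The paper closes this by exploiting the dissipation already won on the left-hand side: writing $\|\mat E^{n+1}\|_{L^2}^2=2\Delta t\sum_{m}\bigl(D_t\mat E^{m+\frac12},\widehat{\mat E}^{m+\frac12}\bigr)$ (since $\mat E^0=0$), bounding it by $\epsilon\,\Delta t\sum_m\|D_t\mat E^{m+\frac12}\|_{L^2}^2+C\Delta t\sum_m\|\mat E^m\|_{L^2}^2$, and adding this to the seminorm inequality before invoking discrete Gronwall on the full $H^1$-norm (with the analogous $\mathbb F^m$ bookkeeping in the BDF2 case). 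You should replace the Poincar\'e step by this argument; a duality argument is unnecessary.
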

\begin{proof}
	From \eqref{eqn:assum2} and \eqref{eqn:assum1x}, we observe that there exist positive constants $c_0$, $c_1$ and $C_0$, such that 
	\begin{align}\label{eqn:assum1}
		\left\|\mat x(\cdot, t)\right\|_{H^1}\leq C_0 
		\quad \text{in}\quad [0, T];\quad 
		c_0\leq \left|\mat x_\rho\right|\leq C_0,\quad 
		\mat x\cdot \mat e_1\geq c_1\quad \text{in} \quad \mathbb I\times [0, T]. 
	\end{align}
	We split
	\begin{align}\label{eqn:error_split}
		\mat x^m-\mat X^m = \bigg[\mat x^m-\Pi^h\mat x^m\bigg]+\bigg[\Pi^h\mat x^m-\mat X^m\bigg]=:\mat d^m+\mat E^m, \qquad m\geq 0. 
	\end{align}
	For convenience, we simply denote $\mat x_\Pi^m:=\Pi^h\mat x^m$. From \eqref{eqn:inter_error}, it suffices to prove \eqref{eqn:err2}.
	Following a similar approach as in \cite{barrett2021finite}, we can obtain the following result:
	\begin{align}\label{eqn:ini}
		\left\|\mat E^1\right\|_{H^1}\leq C\left(\Delta t^2+h^2\right). 
	\end{align}
	The subsequent proof will be given in the following two sections. 
\end{proof}

\begin{rem}
	The conditions $\Delta t\leq \gamma_1\sqrt[4]{h}$ and $h\leq \gamma_2\sqrt{\Delta t}$ are relatively stringent, yet it is indispensable for our proof. To remove the time-space ratio restriction, we can employ the time-space error splitting technique that is given in our recent work \cite{li2025error}. 
\end{rem}

For the theoretical analysis, we will frequently use the well-known Sobolev embedding inequality:
\begin{align}\label{eqn:inequ1}
	\left\| f \right\|_{L^\infty} \leq C\left\| f \right\|_{H^1}, \quad \forall f \in H^1(\bI).
\end{align}
In addition, for $k\in\{0, 1\}$, $l\in \{1, 2\}$ and $p\in [2, \infty]$, there holds 
\begin{align}\label{eqn:inequ2}
	h^{\frac{1}{p}-\frac1r}\left\|\omega\right\|_{W^{0, r}}+h\left|\omega\right|_{W^{1, p}}\leq C\left\|\omega\right\|_{W^{0, p}},\qquad \forall \omega\in V^h,\quad r\in [p, \infty].
\end{align}

\section{Error estimates for the CN method}\label{sec3}
\setcounter{equation}{0}
In this section, we aim to demonstrate the convergence results  \eqref{eqn:err1}-\eqref{eqn:err2} for the CN method. 
From \eqref{eqn:wf} and \eqref{eqn:cn}, we have the following error equation:
\begin{align}\label{eqn:cn_error}
&	\left(\widetilde{\mat X}^{m+\frac12} \cdot \mat e_1D_t\mat E^{m+\frac12}, \mat \eta^h\left|\widetilde{\mat X}^{m+\frac12}_\rho\right|^2\right)
	+\left(\widetilde{\mat X}^{m+\frac12} \cdot \mat e_1\widehat{\mat E}^{m+\frac12}_\rho, \mat \eta^h_\rho\right)\nn\\
& = \left[	\left(\widetilde{\mat X}^{m+\frac12} \cdot \mat e_1D_t\mat x_\Pi^{m+\frac12}, \mat \eta^h\left|\widetilde{\mat X}^{m+\frac12}_\rho\right|^2\right) -	\left(\mat x^{m+\frac12}\cdot \mat e_1\mat x_t^{m+\frac12}, \mat \eta^h\left|\mat x_\rho^{m+\frac12}\right|^2\right)\right]+\Bigg[\left(\widetilde{\mat X}^{m+\frac12} \cdot \mat e_1\widehat{\mat x}^{m+\frac12}_{\Pi, \rho}, \mat \eta^h_\rho\right)-\left(\mat x^{m+\frac12}\cdot\mat e_1\mat x^{m+\frac12}_\rho, \mat \eta^h_\rho\right)\Bigg]\nn\\
&~~~~+\left(\mat \eta^h\cdot\mat e_1, \left|\widetilde{\mat X}^{m+\frac12}_\rho\right|^2-\left|\mat x^{m+\frac12}_\rho\right|^2\right)=:\sum_{i=1}^3\mathbb T_i\left(\mat \eta^h\right).
\end{align}
Taking $\mat \eta^h =\Delta tD_t\mat E^{m+\frac12} $ in \eqref{eqn:cn_error}, and summing over $m=1, \ldots, n$, we have 
\begin{align}\label{eqn:cn_con_pf1}
	\Delta t\sum_{m=1}^n\left(\widetilde{\mat X}^{m+\frac12} \cdot \mat e_1\left|D_t\mat E^{m+\frac12}\right|^2, \left|\widetilde{\mat X}^{m+\frac12}_\rho\right|^2\right)
	+\sum_{m=1}^n\left(\widetilde{\mat X}^{m+\frac12} \cdot \mat e_1\widehat{\mat E}^{m+\frac12}_\rho, \mat E_\rho^{m+1}-\mat E_\rho^{m}\right)=\Delta t\sum_{m=1}^n\sum_{i=1}^3\mathbb T_i\left(D_t\mat E^{m+\frac12}\right).
\end{align}
We split 
\begin{align}\label{eqn:cn_con_pf2}
&	\sum_{m=1}^n\left(\widetilde{\mat X}^{m+\frac12} \cdot \mat e_1\widehat{\mat E}^{m+\frac12}_\rho, \mat E_\rho^{m+1}-\mat E_\rho^{m}\right)
	=\frac{1}{2}	\sum_{m=1}^n\left(\widetilde{\mat X}^{m+\frac12} \cdot \mat e_1, \left|\mat E_\rho^{m+1}\right|^2-\left|\mat E_\rho^{m}\right|^2\right)\nn\\
	&=\frac{3\Delta t}{4}	\sum_{m=1}^n\left( \mat X^m \cdot \mat e_1, D_t\left|\mat E_\rho^{m+\frac12}\right|^2\right)-\frac{\Delta t}{4}	\sum_{m=1}^n\left( \mat X^{m-1} \cdot \mat e_1, D_t\left|\mat E_\rho^{m+\frac12}\right|^2\right).
\end{align}
For the two terms on the right-hand side of \eqref{eqn:cn_con_pf2}, there hold 
\begin{align}
&\frac{3\Delta t}{4}	\sum_{m=1}^n\left( \mat X^m \cdot \mat e_1, D_t\left|\mat E_\rho^{m+\frac12}\right|^2\right)
=\frac{3}{4}\left( \mat X^n \cdot \mat e_1, \left|\mat E_\rho^{n+1}\right|^2\right)
-\frac{3}{4}\sum_{m=2}^n\left(\left[\mat X^{m}-\mat X^{m-1}\right]\cdot \mat e_1, \left|\mat E_\rho^{m}\right|^2\right)
-\frac34\left( \mat X^1 \cdot \mat e_1, \left|\mat E_\rho^{1}\right|^2\right),\label{eqn:cn_con_pf3}\\
&\frac{\Delta t}{4}	\sum_{m=1}^n\left( \mat X^{m-1} \cdot \mat e_1, D_t\left|\mat E_\rho^{m+\frac12}\right|^2\right)
=\frac{1}{4}\left( \mat X^{n-1} \cdot \mat e_1, \left|\mat E_\rho^{n+1}\right|^2\right)
-\frac{1}{4}\sum_{m=2}^n\left( \left[\mat X^{m-1}-\mat X^{m-2}\right] \cdot \mat e_1, \left|\mat E_\rho^{m}\right|^2\right)
-\frac14\left( \mat X^0 \cdot \mat e_1, \left|\mat E_\rho^{1}\right|^2\right).\label{eqn:cn_con_pf4}
\end{align}
Substituting \eqref{eqn:cn_con_pf3} and \eqref{eqn:cn_con_pf4} into \eqref{eqn:cn_con_pf2} gives that 
\begin{align}\label{eqn:cn_con_pf5}
	\sum_{m=1}^n\left(\widetilde{\mat X}^{m+\frac12} \cdot \mat e_1\widehat{\mat E}^{m+\frac12}_\rho, \mat E_\rho^{m+1}-\mat E_\rho^{m}\right)
	=\frac{1}{2}\left( \widetilde{\mat X}^{n+\frac12}  \cdot \mat e_1, \left|\mat E_\rho^{n+1}\right|^2\right)
	-\frac{\Delta t}{2}\sum_{m=2}^n\left( D_t\widetilde{\mat X}^{m+\frac12} \cdot \mat e_1, \left|\mat E_\rho^{m}\right|^2\right)
	-\frac12\left( 	\widetilde{\mat X}^{\frac32} \cdot \mat e_1, \left|\mat E_\rho^{1}\right|^2\right).
\end{align}
Substituting \eqref{eqn:cn_con_pf5} into \eqref{eqn:cn_con_pf1}, we obtain 
\begin{align}\label{eqn:cn_con_pf6}
&	\Delta t\sum_{m=1}^n\left(\widetilde{\mat X}^{m+\frac12} \cdot \mat e_1\left|D_t\mat E^{m+\frac12}\right|^2, \left|\widetilde{\mat X}^{m+\frac12}_\rho\right|^2\right)
	+\frac{1}{2}\left( \widetilde{\mat X}^{n+\frac12}  \cdot \mat e_1, \left|\mat E_\rho^{n+1}\right|^2\right)\nn\\
	&=
	\frac{\Delta t}{2}\sum_{m=2}^n\left( D_t\widetilde{\mat X}^{m+\frac12} \cdot \mat e_1, \left|\mat E_\rho^{m}\right|^2\right)+
	\Delta t\sum_{m=1}^n\sum_{i=1}^3\mathbb T_i\left(D_t\mat E^{m+\frac12}\right)+\frac12\left( 	\widetilde{\mat X}^{\frac32} \cdot \mat e_1, \left|\mat E_\rho^{1}\right|^2\right).
\end{align}

By the mathematical induction, using Taylor's formula, \eqref{eqn:assum2} and \eqref{eqn:inequ1}, we have 
\begin{align}\label{eqn:cn_con_pf7}
&	\left\|\mat x^{m+\frac12}\cdot \mat e_1-\widetilde{\mat X}^{m+\frac12}\cdot \mat e_1\right\|_{L^\infty}\leq \left\|\mat x^{m+\frac12}\cdot \mat e_1-\widetilde{\mat x}^{m+\frac12}\cdot \mat e_1\right\|_{L^\infty}+	\left\|\widetilde{\mat x}^{m+\frac12}\cdot \mat e_1-\widetilde{\mat X}^{m+\frac12}\cdot \mat e_1\right\|_{L^\infty}\nn\\
&
	\leq C\Delta t^2+ C\left\|\widetilde{\mat x}^{m+\frac12}-\widetilde{\mat X}^{m+\frac12}\right\|_{H^1}
	\leq C\left(\Delta t^2+h\right).
\end{align}
From \eqref{eqn:cn_con_pf7} and the assumption \eqref{eqn:assum1}, we have 
\begin{align}\label{eqn:cn_con_pf8}
	\widetilde{\mat X}^{m+\frac12}\cdot \mat e_1\geq \mat x^{m+\frac12}\cdot \mat e_1-C\left(\Delta t^2+h\right)
	\geq C_1-C\left(\Delta t^2+h\right)\geq \frac{C_1}{2},
\end{align}
provided that $\Delta t>0$ and $h>0$ are selected sufficiently small. 
In addition, by using the inverse inequality, mathematical induction, \eqref{eqn:inter_error} and Taylor's formula, we have
\begin{align}\label{eqn:cn_con_pf9}
&	\left|\widetilde{\mat X}^{m+\frac12}_\rho-{\mat x}_\rho^{m+\frac12}\right|
	\leq 	\left\|\widetilde{\mat X}_\rho^{m+\frac12}-\widetilde{\mat x}_\rho^{m+\frac12}\right\|_{L^\infty}
	+\left\|{\mat x}_\rho^{m+\frac12}-\widetilde{\mat x}_\rho^{m+\frac12}\right\|_{L^\infty}\nn\\
&\leq 
	\left\|\widetilde{\mat E}_\rho^{m+\frac12}\right\|_{L^\infty}
	+\left\|\widetilde{\mat d}_\rho^{m+\frac12}\right\|_{L^\infty}
	+C\Delta t^2
	\leq Ch^{-\frac12}\left\|\widetilde{\mat E}_\rho^{m+\frac12}\right\|_{L^2}+C\left(\Delta t^2+h\right)\nn\\
	&\leq Ch^{-\frac12}\left(\Delta t^2+h^2\right)+C\left(\Delta t^2+h\right)\leq 
	C\left(h^{-\frac12}\Delta t^2+h+\Delta t^2\right).
\end{align}
Under the conditions of 
$\Delta t\leq \gamma_1\sqrt[4]{h}$ with suitably selected positive constants $\gamma_1$, and thanks to \eqref{eqn:assum1} and \eqref{eqn:cn_con_pf9}, we derive 
\begin{align}\label{eqn:cn_con_pf10}
\frac{c_0}{2}\leq \left|{\mat x}^{m+\frac12}_\rho\right|-C\left(h^{-\frac12}\Delta t^2+h+\Delta t^2\right)	\leq \left|\widetilde{\mat X}^{m+\frac12}_\rho\right|\leq \left|{\mat x}^{m+\frac12}_\rho\right|+C\left(h^{-\frac12}\Delta t^2+h+\Delta t^2\right)\leq 2C_0.
\end{align}
In addition, we also have 
\begin{align}\label{eqn:cn_con_pf10x}
\left\|\widetilde{\mat X}^{m+\frac12}\right\|_{L^\infty}\leq \left\|\mat x^{m+\frac12}\right\|_{L^\infty}+ \left\|\mat x^{m+\frac12}-\widetilde{\mat x}^{m+\frac12}\right\|_{L^\infty}+\left\|\widetilde{\mat d}^{m+\frac12}\right\|_{L^\infty}+\left\|\widetilde{\mat E}^{m+\frac12}\right\|_{L^\infty}\leq \left\|\mat x^{m+\frac12}\right\|_{L^\infty}+ C(\Delta t^2+h^2)\leq 2C_0.
\end{align}
By applying \eqref{eqn:cn_con_pf8} and \eqref{eqn:cn_con_pf10} to the left-hand side of \eqref{eqn:cn_con_pf6}, we obtain
\begin{align}\label{eqn:cn_con_pf11}
		\Delta t\sum_{m=1}^n\left(\widetilde{\mat X}^{m+\frac12} \cdot \mat e_1\left|D_t\mat E^{m+\frac12}\right|^2, \left|\widetilde{\mat X}^{m+\frac12}_\rho\right|^2\right)
	+\frac{1}{2}\left( \widetilde{\mat X}^{n+\frac12}  \cdot \mat e_1, \left|\mat E_\rho^{n+1}\right|^2\right)
	\geq \frac{C_1c_0^2}{8}	\Delta t\sum_{m=1}^n\left\|D_t\mat E^{m+\frac12}\right\|_{L^2}^2
	+\frac{C_1}{4}\left\|\mat E_\rho^{n+1}\right\|_{L^2}^2.
\end{align}
For the first term on the right-hand side of \eqref{eqn:cn_con_pf6}, using \eqref{eqn:inequ1} and the mathematical induction, we get  
\begin{align}\label{eqn:cn_con_pf12}
	& \frac{\Delta t}{2}\sum_{m=2}^n\left( D_t\widetilde{\mat X}^{m+\frac12} \cdot \mat e_1, \left|\mat E_\rho^{m}\right|^2\right)
	=\frac{\Delta t}{2}\sum_{m=2}^n\left( D_t\widetilde{\mat x}_\Pi^{m+\frac12} \cdot \mat e_1, \left|\mat E_\rho^{m}\right|^2\right)
	-\frac{\Delta t}{2}\sum_{m=2}^n\left( D_t\widetilde{\mat E}^{m+\frac12} \cdot \mat e_1, \left|\mat E_\rho^{m}\right|^2\right)\nn\\
	&\leq \frac{\Delta t}{2}\sum_{m=2}^n
	\left(\left\|D_t\widetilde{\mat x}_\Pi^{m+\frac12} \right\|_{L^\infty}+\left\|D_t\widetilde{\mat E}^{m+\frac12}\right\|_{L^\infty}\right)\left\|\mat E_\rho^{m}\right\|_{L^2}^2\leq C\Delta t\sum_{m=2}^n
	\left(\left\|D_t\widetilde{\mat x}_\Pi^{m+\frac12} \right\|_{L^\infty}+\left\|D_t\widetilde{\mat E}^{m+\frac12}\right\|_{H^1}\right)\left\|\mat E_\rho^{m}\right\|_{L^2}^2\nn\\
	&\leq C\Delta t\sum_{m=2}^n
	\left[1+\Delta t^{-1}\left(\Delta t^2+h^2\right)\right]\left\|\mat E_\rho^{m}\right\|_{L^2}^2\leq C\Delta t\sum_{m=2}^n\left\|\mat E_\rho^{m}\right\|_{L^2}^2,
	\end{align}
	where we assume that $h\leq \gamma_2\sqrt{\Delta t}$ with suitably selected positive constant $\gamma_2$. 
Next, we estimate the second term on the right-hand side of \eqref{eqn:cn_con_pf6}. We split 
\begin{align}\label{eqn:cn_con_pf13}
\mathbb T_1\left(D_t\mat E^{m+\frac12}\right)
&=\left(\widetilde{\mat X}^{m+\frac12} \cdot \mat e_1D_t\mat x_\Pi^{m+\frac12}, D_t\mat E^{m+\frac12}\left|\widetilde{\mat X}^{m+\frac12}_\rho\right|^2\right) -	\left(\mat x^{m+\frac12}\cdot \mat e_1\mat x_t^{m+\frac12}, D_t\mat E^{m+\frac12}\left|\mat x_\rho^{m+\frac12}\right|^2\right)\nn\\
&=\left(\left[\widetilde{\mat X}^{m+\frac12}-\mat x^{m+\frac12}\right] \cdot \mat e_1D_t\mat x_\Pi^{m+\frac12}, D_t\mat E^{m+\frac12}\left|\widetilde{\mat X}^{m+\frac12}_\rho\right|^2\right)
+\left(\mat x^{m+\frac12}\cdot \mat e_1
\left[D_t\mat x_\Pi^{m+\frac12}-\mat x_t^{m+\frac12}\right], D_t\mat E^{m+\frac12}\left|\widetilde{\mat X}^{m+\frac12}_\rho\right|^2\right)\nn\\
&~~~~+\left(\mat x^{m+\frac12}\cdot \mat e_1
\mat x_t^{m+\frac12}, D_t\mat E^{m+\frac12}
\left[\left|\widetilde{\mat X}^{m+\frac12}_\rho\right|^2-\left|\mat x_\rho^{m+\frac12}\right|^2\right]\right)\nn\\
&=:\sum_{i=1}^3\mathbb T_{1, i}\left(D_t\mat E^{m+\frac12}\right). 
\end{align}
Using \eqref{eqn:cn_con_pf10}, \eqref{eqn:inequ1} and \eqref{eqn:assum2}, we have 
\begin{align}\label{eqn:cn_con_pf14}
&\mathbb T_{1, 1}\left(D_t\mat E^{m+\frac12}\right)
=\left(\left[\widetilde{\mat X}^{m+\frac12}-\mat x^{m+\frac12}\right] \cdot \mat e_1D_t\mat x_\Pi^{m+\frac12}, D_t\mat E^{m+\frac12}\left|\widetilde{\mat X}^{m+\frac12}_\rho\right|^2\right)\nn\\
&\leq \left\|\widetilde{\mat X}^{m+\frac12}-\mat x^{m+\frac12}\right\|_{L^2}\left\|D_t\mat x_\Pi^{m+\frac12}\right\|_{L^\infty}
\left\|D_t\mat E^{m+\frac12}\right\|_{L^2}
\left\|\widetilde{\mat X}^{m+\frac12}_\rho\right\|_{L^\infty}^2\nn\\
&\leq \epsilon\left\|D_t\mat E^{m+\frac12}\right\|_{L^2}^2+C_\epsilon\left\|\widetilde{\mat X}^{m+\frac12}-\mat x^{m+\frac12}\right\|_{L^2}^2.
\end{align}
We observe that 
\begin{align}\label{eqn:cn_con_pf15}
\widetilde{\mat X}^{m+\frac12}-\mat x^{m+\frac12}
=\bigg[\widetilde{\mat X}^{m+\frac12}-\widetilde{\mat x}^{m+\frac12}\bigg]+\bigg[\widetilde{\mat x}^{m+\frac12}-\mat x^{m+\frac12}\bigg]
=-\frac32\left(\mat E^m+\mat d^m\right)+\frac12\left(\mat E^{m-1}+\mat d^{m-1}\right)+\bigg[\widetilde{\mat x}^{m+\frac12}-\mat x^{m+\frac12}\bigg]. 
\end{align}
Substituting \eqref{eqn:cn_con_pf15} into \eqref{eqn:cn_con_pf14}, from \eqref{eqn:inter_error} and by using Taylor's formula, we can obtain 
\begin{align}\label{eqn:cn_con_pf16}
	\Delta t\sum_{m=1}^n \mathbb T_{1, 1}\left(D_t\mat E^{m+\frac12}\right)\leq 
	\epsilon\,\Delta t\sum_{m=1}^n\left\|D_t\mat E^{m+\frac12}\right\|_{L^2}^2
	+C_\epsilon\Delta t\sum_{m=0}^n\left\|\mat E^m\right\|_{L^2}^2
	+C_\epsilon\left(\Delta t^4+h^4\right). 
\end{align}
In addition, from \eqref{eqn:assum2} and \eqref{eqn:cn_con_pf10}, we have 
\begin{align}\label{eqn:cn_con_pf17}
	&	\Delta t\sum_{m=1}^n \mathbb T_{1, 2}\left(D_t\mat E^{m+\frac12}\right)\leq 
		\Delta t\sum_{m=1}^n \left\|\mat x^{m+\frac12}\right\|_{L^\infty}
		\left\|D_t\mat x_\Pi^{m+\frac12}-\mat x_t^{m+\frac12}\right\|_{L^2}\left\|D_t\mat E^{m+\frac12}\right\|_{L^2}\left\|\widetilde{\mat X}^{m+\frac12}_\rho\right\|_{L^\infty}^2\nn\\
		&\leq \epsilon\,\Delta t\sum_{m=1}^n\left\|D_t\mat E^{m+\frac12}\right\|_{L^2}^2
		+C_\epsilon\Delta t\sum_{m=1}^n\left\|D_t\mat x_\Pi^{m+\frac12}-\mat x_t^{m+\frac12}\right\|_{L^2}^2.
\end{align}
Thanks to 
\begin{align*}
	&D_t\mat x_\Pi^{m+\frac12}-\mat x_t^{m+\frac12} = \frac{1}{\Delta t}\int_{t_m}^{t_{m+1}}\bigg[\mat x_{\Pi, t}-\mat x_t^{m+\frac12}\bigg]dt
	 = \frac{1}{\Delta t}\int_{t_m}^{t_{m+1}}\bigg[\mat x_{\Pi, t}-\mat x_t\bigg]dt
	 + \frac{1}{\Delta t}\int_{t_m}^{t_{m+1}}\bigg[\mat x_{t}-\mat x_t^{m+\frac12}\bigg]dt\nn\\
	 &=\frac{1}{\Delta t}\int_{t_m}^{t_{m+1}}\bigg[\mat x_{\Pi, t}-\mat x_t\bigg]dt
	 + \frac{1}{\Delta t}\int_{t_m}^{t_{m+1}}\left[\left(t-t_{m+\frac12}\right)\mat x_{tt}^{m+\frac12}+\frac{\left(t-t_{m+\frac12}\right)^2}{2}\mat x_{ttt}
	 \left(\cdot, \xi\right)\right]dt\nn\\
	 &=\frac{1}{\Delta t}\int_{t_m}^{t_{m+1}}\bigg[\mat x_{\Pi, t}-\mat x_t\bigg]dt
	 + \frac{1}{2\Delta t}\int_{t_m}^{t_{m+1}}\left(t-t_{m+\frac12}\right)^2\mat x_{ttt}
	 \left(\cdot, \xi\right)dt,
\end{align*}
and using \eqref{eqn:inter_error} and \eqref{eqn:assum2}, we have 
\begin{align}\label{eqn:cn_con_pf18}
&	\left\|D_t\mat x_\Pi^{m+\frac12}-\mat x_t^{m+\frac12}\right\|_{L^2}
	\leq \frac{1}{\Delta t}\int_{t_m}^{t_{m+1}}\left\|\mat x_{\Pi, t}-\mat x_t\right\|_{L^2}dt
	+ \frac{1}{2\Delta t}\int_{t_m}^{t_{m+1}}\left(t-t_{m+\frac12}\right)^2dt\max_{t\in [t_m, t_{m+1}]}\left\|\mat x_{ttt}\right\|_{L^2}\nn\\
	&\leq Ch^2\max_{t\in [t_m, t_{m+1}]}\left|\mat x_{t}\right|_{H^2}
	+\frac{\Delta t^2}{24}\max_{t\in [t_m, t_{m+1}]}\left\|\mat x_{ttt}\right\|_{L^2}
	\leq C\left(\Delta t^2+h^2\right). 
\end{align}
Hence, from \eqref{eqn:cn_con_pf17} and \eqref{eqn:cn_con_pf18}, there holds 
\begin{align}\label{eqn:cn_con_pf18add}
	\Delta t\sum_{m=1}^n \mathbb T_{1, 2}\left(D_t\mat E^{m+\frac12}\right)\leq \epsilon\,\Delta t\sum_{m=1}^n\left\|D_t\mat E^{m+\frac12}\right\|_{L^2}^2
	+ C_\epsilon\left(\Delta t^4+h^4\right). 
\end{align}
We next estimate the term $\mathbb T_{1, 3}\left(D_t\mat E^{m+\frac12}\right)$. To this end, we first split 
\begin{align}\label{eqn:cn_con_pf19}
&	\left|\widetilde{\mat X}^{m+\frac12}_\rho\right|^2-\left|\mat x_\rho^{m+\frac12}\right|^2
	=\left(\widetilde{\mat X}^{m+\frac12}_\rho-\mat x_\rho^{m+\frac12}\right)^2
	+2\left(\widetilde{\mat X}^{m+\frac12}_\rho-\widetilde{\mat x}^{m+\frac12}_\rho\right)\cdot \mat x_\rho^{m+\frac12}+2\left(\widetilde{\mat x}^{m+\frac12}_\rho-\mat x_\rho^{m+\frac12}\right)\cdot \mat x_\rho^{m+\frac12}\nn\\
	&=
	\left(\mat x_\rho^{m+\frac12}-\widetilde{\mat x}^{m+\frac12}_\rho
	+\widetilde{\mat E}^{m+\frac12}_\rho+\widetilde{\mat d}^{m+\frac12}_\rho
	\right)^2
	-2\left(\widetilde{\mat E}^{m+\frac12}_\rho+\widetilde{\mat d}^{m+\frac12}_\rho\right)\cdot \mat x_\rho^{m+\frac12}
	+2\left(\widetilde{\mat x}^{m+\frac12}_\rho-\mat x_\rho^{m+\frac12}\right)\cdot \mat x_\rho^{m+\frac12}\nn\\
	&=:-2\widetilde{\mat d}^{m+\frac12}_\rho\cdot \mat x_\rho^{m+\frac12}
	+u^{m+\frac12}.
\end{align}
Using \eqref{eqn:assum1}, \eqref{eqn:cn_con_pf10}, \eqref{eqn:inequ1}, \eqref{eqn:assum2}, \eqref{eqn:inter_error} and Taylor's formula, we have 
\begin{align}\label{eqn:cn_con_pf20}
	\left\|u^{m+\frac12}\right\|_{L^2}\leq &
	\left(\left\|\mat x_\rho^{m+\frac12}-\widetilde{\mat x}^{m+\frac12}_\rho\right\|_{L^\infty}\left\|\mat x_\rho^{m+\frac12}-\widetilde{\mat x}^{m+\frac12}_\rho\right\|_{L^2}
	+\left\|\widetilde{\mat E}^{m+\frac12}_\rho\right\|_{L^\infty}\left\|\widetilde{\mat E}^{m+\frac12}_\rho\right\|_{L^2}+\left\|\widetilde{\mat d}^{m+\frac12}_\rho\right\|_{L^\infty}\left\|\widetilde{\mat d}^{m+\frac12}_\rho\right\|_{L^2}\right)\nn\\
	&+2\left(\left\|\mat x_\rho^{m+\frac12}-\widetilde{\mat x}^{m+\frac12}_\rho\right\|_{L^\infty}\left\|\widetilde{\mat E}^{m+\frac12}_\rho\right\|_{L^2} 
	+\left\|\mat x_\rho^{m+\frac12}-\widetilde{\mat x}^{m+\frac12}_\rho\right\|_{L^\infty}\left\|\widetilde{\mat d}^{m+\frac12}_\rho\right\|_{L^2} 
	+\left\|\widetilde{\mat d}^{m+\frac12}_\rho\right\|_{L^\infty} \left\|\widetilde{\mat E}^{m+\frac12}_\rho\right\|_{L^2} 
	\right)\nn\\
	&+2\left\|\widetilde{\mat E}^{m+\frac12}_\rho\right\|_{L^2}\left\|\mat x_\rho^{m+\frac12}\right\|_{L^\infty}+2\left\|\widetilde{\mat x}^{m+\frac12}_\rho-\mat x_\rho^{m+\frac12}\right\|_{L^2}\left\|\mat x_\rho^{m+\frac12}\right\|_{L^\infty}\nn\\
	\leq & C\left(\left\|\widetilde{\mat E}^{m+\frac12}_\rho\right\|_{L^2}+\Delta t^2+h^2\right). 
\end{align}
From \eqref{eqn:cn_con_pf19} and \eqref{eqn:cn_con_pf20}, using integration by parts, there holds 
\begin{align}\label{eqn:cn_con_pf21}
	&\mathbb T_{1, 3}\left(D_t\mat E^{m+\frac12}\right) = -2\left(\mat x^{m+\frac12}\cdot \mat e_1
	\mat x_t^{m+\frac12}, D_t\mat E^{m+\frac12}
	\widetilde{\mat d}^{m+\frac12}_\rho\cdot \mat x_\rho^{m+\frac12}
	\right)+\left(\mat x^{m+\frac12}\cdot \mat e_1
	\mat x_t^{m+\frac12}, D_t\mat E^{m+\frac12}
	u^{m+\frac12}\right)\nn\\
	& = 2\left(\mat x_\rho^{m+\frac12}\cdot \mat e_1
	\mat x_t^{m+\frac12}, D_t\mat E^{m+\frac12}
	\widetilde{\mat d}^{m+\frac12}\cdot \mat x_\rho^{m+\frac12}
	\right)+
	2\left(\mat x^{m+\frac12}\cdot \mat e_1
	\mat x_{t\rho}^{m+\frac12}, D_t\mat E^{m+\frac12}
	\widetilde{\mat d}^{m+\frac12}\cdot \mat x_\rho^{m+\frac12}
	\right)\nn\\
& ~~~~+	2\left(\mat x^{m+\frac12}\cdot \mat e_1
\mat x_t^{m+\frac12}, D_t\mat E_\rho^{m+\frac12}
\widetilde{\mat d}^{m+\frac12}\cdot \mat x_\rho^{m+\frac12}
\right)
	+2\left(\mat x^{m+\frac12}\cdot \mat e_1
	\mat x_t^{m+\frac12}, D_t\mat E^{m+\frac12}
	\widetilde{\mat d}^{m+\frac12}\cdot \mat x_{\rho\rho}^{m+\frac12}
	\right)\nn\\
&~~~~	+
	\left(\mat x^{m+\frac12}\cdot \mat e_1
	\mat x_t^{m+\frac12}, D_t\mat E^{m+\frac12}
	u^{m+\frac12}\right)\nn\\
	&=: \left(D_t\mat E^{m+\frac12}, \mat v^{m+\frac12}\right)+	\left(D_t\mat E_\rho^{m+\frac12}, \mat w^{m+\frac12}
	\right),
\end{align}
where 
\begin{align*}
&\mat v^{m+\frac12}:=2\mat x_\rho^{m+\frac12}\cdot \mat e_1
\widetilde{\mat d}^{m+\frac12}\cdot \mat x_\rho^{m+\frac12}\mat x_t^{m+\frac12}
+2\mat x^{m+\frac12}\cdot \mat e_1	\widetilde{\mat d}^{m+\frac12}\cdot \mat x_\rho^{m+\frac12}
\mat x_{t\rho}^{m+\frac12}
+2\mat x^{m+\frac12}\cdot \mat e_1	\widetilde{\mat d}^{m+\frac12}\cdot \mat x_{\rho\rho}^{m+\frac12}
\mat x_t^{m+\frac12}+\mat x^{m+\frac12}\cdot \mat e_1u^{m+\frac12}
\mat x_t^{m+\frac12},\\
&\mat w^{m+\frac12}:=2\mat x^{m+\frac12}\cdot \mat e_1	\widetilde{\mat d}^{m+\frac12}\cdot \mat x_\rho^{m+\frac12}
\mat x_t^{m+\frac12}. 
\end{align*}
By using \eqref{eqn:inter_error}, \eqref{eqn:assum2} and \eqref{eqn:cn_con_pf20}, we easily obtain 
\begin{align}\label{eqn:cn_con_pf22}
	\left\|\mat v^{m+\frac12}\right\|_{L^2}\leq C\left(\left\|\widetilde{\mat E}^{m+\frac12}_\rho\right\|_{L^2}+\Delta t^2+h^2\right), 
\end{align}
which further implies that 
\begin{align}\label{eqn:cn_con_pf22x}
	\Delta t\sum_{m=1}^n\left(D_t\mat E^{m+\frac12}, \mat v^{m+\frac12}
	\right)\leq \epsilon \Delta t\sum_{m=1}^n\left\|D_t\mat E^{m+\frac12}\right\|_{L^2}^2+C_\epsilon\Delta t\sum_{m=1}^n\left\|\widetilde{\mat E}^{m+\frac12}_\rho\right\|_{L^2}^2+C_\epsilon\left(\Delta t^4+h^4\right).
\end{align}
Additionally, it is obvious that 
\begin{align}\label{eqn:cn_con_pf23}
\Delta t\sum_{m=1}^n\left(D_t\mat E_\rho^{m+\frac12}, \mat w^{m+\frac12}
\right)=\left(\mat E_\rho^{n+1}, \mat w^{n+\frac12}\right)
-\Delta t\sum_{m=2}^n\left(\mat E_\rho^{m}, \frac{\mat w^{m+\frac12}-\mat w^{m-\frac12}}{\Delta t}\right)
-\left(\mat E_\rho^1, \mat w^{\frac32}\right).
\end{align}
Obviously, from \eqref{eqn:inter_error} and because of \eqref{eqn:assum2}, we have 
\begin{align}\label{eqn:cn_con_pf24}
	\left\|\mat w^{n+\frac12}\right\|_{L^2}\leq Ch^2,\qquad 	\left\|\mat w^{\frac32}\right\|_{L^2}\leq Ch^2. 
\end{align}
Thanks to \eqref{eqn:inter_error}, Taylor's formula and \eqref{eqn:assum2}, we have 
\begin{align}\label{eqn:cn_con_pf25}
	\left\|\frac{\mat w^{m+\frac12}-\mat w^{m-\frac12}}{\Delta t}\right\|_{L^2}
&	\leq 2\left\|\frac{\mat x^{m+\frac12}\cdot\mat e_1-\mat x^{m-\frac12}\cdot\mat e_1}{\Delta t}\right\|_{L^\infty}\left\|\mat x_\rho^{m+\frac12}\right\|_{L^\infty}\left\|\mat x_t^{m+\frac12}\right\|_{L^\infty}\left\|\widetilde{\mat d}^{m+\frac12}\right\|_{L^2}\nn\\
&~~~~+2\left\|\mat x^{m-\frac12}\cdot \mat e_1\right\|_{L^\infty}\left\|\mat x_\rho^{m+\frac12}\right\|_{L^\infty}\left\|\mat x_t^{m+\frac12}\right\|_{L^\infty}
\left\|\frac{\widetilde{\mat d}^{m+\frac12}-\widetilde{\mat d}^{m-\frac12}}{\Delta t}\right\|_{L^2}\nn\\
&~~~~+2\left\|\mat x^{m-\frac12}\cdot \mat e_1\right\|_{L^\infty}\left\|\frac{\mat x_\rho^{m+\frac12}-\mat x_\rho^{m-\frac12}}{\Delta t}\right\|_{L^\infty}\left\|\mat x_t^{m+\frac12}\right\|_{L^\infty}\left\|\widetilde{\mat d}^{m-\frac12}\right\|_{L^2}\nn\\
&~~~~+2\left\|\mat x^{m-\frac12}\cdot \mat e_1\right\|_{L^\infty}\left\|\mat x_\rho^{m-\frac12}\right\|_{L^\infty}\left\|\frac{\mat x_t^{m+\frac12}-\mat x_t^{m-\frac12}}{\Delta t}\right\|_{L^\infty}\left\|\widetilde{\mat d}^{m-\frac12}\right\|_{L^2}\nn\\
&\leq C\left(\left\|\widetilde{\mat d}^{m+\frac12}\right\|_{L^2}+\left\|\widetilde{\mat d}^{m-\frac12}\right\|_{L^2}+\left\|\frac{\widetilde{\mat d}^{m+\frac12}-\widetilde{\mat d}^{m-\frac12}}{\Delta t}\right\|_{L^2}\right)\leq Ch^2. 
\end{align}
Using \eqref{eqn:cn_con_pf24} and \eqref{eqn:cn_con_pf25} in \eqref{eqn:cn_con_pf23} gives that 
\begin{align}\label{eqn:cn_con_pf26}
	\Delta t\sum_{m=1}^n\left(D_t\mat E_\rho^{m+\frac12}, \mat w^{m+\frac12}
	\right)\leq \epsilon\left\|\mat E_\rho^{n+1}\right\|_{L^2}^2
	+C\Delta t\sum_{m=2}^n\left\|\mat E_\rho^{m}\right\|_{L^2}^2
	+C\left\|\mat E_\rho^{1}\right\|_{L^2}^2 +C_\epsilon h^4. 
\end{align}
Then, combining \eqref{eqn:cn_con_pf22x} and \eqref{eqn:cn_con_pf26}, we derive that 
\begin{align}\label{eqn:cn_con_pf27}
\Delta t\sum_{m=1}^n \mathbb T_{1, 3}\left(D_t\mat E^{m+\frac12}\right)\leq 
\epsilon\left\|\mat E_\rho^{n+1}\right\|_{L^2}^2+
\epsilon \Delta t\sum_{m=1}^n\left\|D_t\mat E^{m+\frac12}\right\|_{L^2}^2
+C_\epsilon\Delta t\sum_{m=1}^n\left\|\mat E_\rho^{m}\right\|_{L^2}^2
+C\left\|\mat E_\rho^{1}\right\|_{L^2}^2 +C_\epsilon \left(\Delta t^4+h^4\right). 
\end{align}
Combining \eqref{eqn:cn_con_pf16}, \eqref{eqn:cn_con_pf18add} and \eqref{eqn:cn_con_pf27} together, we arrive at 
\begin{align}\label{eqn:cn_con_pf28}
	\Delta t\sum_{m=1}^n \mathbb T_{1}\left(D_t\mat E^{m+\frac12}\right)\leq \epsilon\left\|\mat E_\rho^{n+1}\right\|_{L^2}^2+
	\epsilon \Delta t\sum_{m=1}^n\left\|D_t\mat E^{m+\frac12}\right\|_{L^2}^2
	+C_\epsilon\Delta t\sum_{m=1}^n\left\|\mat E^{m}\right\|_{H^1}^2
	+C\left\|\mat E_\rho^{1}\right\|_{L^2}^2 +C_\epsilon \left(\Delta t^4+h^4\right). 
\end{align}
Let us next investigate the terms involving $\mathbb T_2(D_t\mat E^{m+\frac12})$ in \eqref{eqn:cn_error}. Since 
\begin{align}\label{eqn:cn_con_pf29}
	\int_{\mathbb I_j}\left[f-\Pi^hf\right]_\rho\eta_\rho d\rho = 0,\qquad \eta\in V^h,\quad j=1,\ldots, J, 
\end{align}
we can write 
\begin{align}\label{eqn:cn_con_pf30}
\mathbb T_2\left(D_t\mat E^{m+\frac12}\right)&=	\left(\widetilde{\mat X}^{m+\frac12} \cdot \mat e_1\widehat{\mat x}^{m+\frac12}_{\Pi, \rho}, D_t\mat E^{m+\frac12}_\rho\right)-\left(\mat x^{m+\frac12}\cdot\mat e_1\mat x^{m+\frac12}_\rho, D_t\mat E^{m+\frac12}_\rho\right)\nn\\
&=-\left(\widetilde{\mat X}^{m+\frac12} \cdot \mat e_1\widehat{\mat d}^{m+\frac12}_{ \rho}, D_t\mat E^{m+\frac12}_\rho\right)
+\left(\widetilde{\mat X}^{m+\frac12} \cdot \mat e_1\left[\widehat{\mat x}^{m+\frac12}_{\rho}-\mat x^{m+\frac12}_\rho\right], D_t\mat E^{m+\frac12}_\rho\right)\nn\\
&~~~~
+\left(\left[\widetilde{\mat X}^{m+\frac12} \cdot \mat e_1-\mat x^{m+\frac12}\cdot\mat e_1\right]\mat x^{m+\frac12}_\rho, D_t\mat E^{m+\frac12}_\rho\right)\nn\\
&=-\left(\left[\widetilde{\mat X}^{m+\frac12} \cdot \mat e_1-P^h\left(\widetilde{\mat X}^{m+\frac12} \cdot \mat e_1\right)\right]\widehat{\mat d}^{m+\frac12}_{\rho}, D_t\mat E^{m+\frac12}_\rho\right)
+\left(\widetilde{\mat X}^{m+\frac12} \cdot \mat e_1\left[\widehat{\mat x}^{m+\frac12}_{\rho}-\mat x^{m+\frac12}_\rho\right], D_t\mat E^{m+\frac12}_\rho\right)\nn\\
&~~~~
+\left(\left[\widetilde{\mat X}^{m+\frac12} \cdot \mat e_1-\mat x^{m+\frac12}\cdot\mat e_1\right]\mat x^{m+\frac12}_\rho, D_t\mat E^{m+\frac12}_\rho\right)\nn\\
&=: \sum_{i=1}^3\mathbb T_{2, i}\left(D_t\mat E^{m+\frac12}\right). 
\end{align}
For the term involving $\mathbb T_{2, 1}\left(D_t\mat E^{m+\frac12}\right)$, we denote  
\begin{align}\label{eqn:cn_con_pf31}
	\Delta t\sum_{m=1}^n\mathbb T_{2, 1}\left(D_t\mat E^{m+\frac12}\right)
	=
	-\Delta t\sum_{m=1}^n\left(\left[\widetilde{\mat X}^{m+\frac12} \cdot \mat e_1-P^h\left(\widetilde{\mat X}^{m+\frac12} \cdot \mat e_1\right)\right]\widehat{\mat d}^{m+\frac12}_{\rho}, D_t\mat E^{m+\frac12}_\rho\right)
	=:-\Delta t\sum_{m=1}^n\left(\mat g^{m+\frac12}, D_t\mat E^{m+\frac12}_\rho\right). 
\end{align}
In addition, we have 
\begin{align}\label{eqn:cn_con_pf32}
	-\Delta t\sum_{m=1}^n\left(\mat g^{m+\frac12}, D_t\mat E^{m+\frac12}_\rho\right)
	=-\left(\mat g^{n+\frac12}, \mat E^{n+1}_\rho\right)+\Delta t \sum_{m=2}^n\left(\frac{\mat g^{m+\frac12}-\mat g^{m-\frac12}}{\Delta t}, \mat E^{m}_\rho\right)
	+\left(\mat g^{\frac32}, \mat E^{1}_\rho\right).
\end{align}
Obviously, from \eqref{eqn:ph_error}, \eqref{eqn:inter_error}, \eqref{eqn:cn_con_pf10} and \eqref{eqn:assum2}, there holds 
\begin{align}\label{eqn:cn_con_pf33}
	\left\|\mat g^{n+\frac12}\right\|_{L^2}\leq 
	\left\|\widetilde{\mat X}^{m+\frac12} \cdot \mat e_1-P^h\left(\widetilde{\mat X}^{m+\frac12} \cdot \mat e_1\right)\right\|_{L^\infty}
	\left\|\widehat{{\mat d}}^{m+\frac12}_{\rho}\right\|_{L^2}
	\leq 
	Ch^2\left\|\widetilde{\mat X}_\rho^{n+\frac12} \cdot \mat e_1\right\|_{L^\infty}\left|\widehat{\mat x}^{n+\frac12}\right|_{H^2}\leq Ch^2.
\end{align}
Similarly, we have 
\begin{align}\label{eqn:cn_con_pf34}
	\left\|\mat g^{\frac32}\right\|_{L^2}\leq Ch^2. 
\end{align}
Moreover, we write 
\begin{align}\label{eqn:cn_con_pf35}
	&\frac{\mat g^{m+\frac12}-\mat g^{m-\frac12}}{\Delta t}
	=\frac{\left[\widetilde{\mat X}^{m+\frac12} \cdot \mat e_1-P^h\left(\widetilde{\mat X}^{m+\frac12} \cdot \mat e_1\right)\right]\widehat{\mat d}^{m+\frac12}_{ \rho}-\left[\widetilde{\mat X}^{m-\frac12} \cdot \mat e_1-P^h\left(\widetilde{\mat X}^{m-\frac12} \cdot \mat e_1\right)\right]\widehat{\mat d}^{m-\frac12}_{ \rho}}{\Delta t}\nn\\
	& =\frac{\left[\widetilde{\mat X}^{m+\frac12} \cdot \mat e_1-P^h\left(\widetilde{\mat X}^{m+\frac12} \cdot \mat e_1\right)\right]-\left[\widetilde{\mat X}^{m-\frac12} \cdot \mat e_1-P^h\left(\widetilde{\mat X}^{m-\frac12} \cdot \mat e_1\right)\right]}{\Delta t}\widehat{\mat d}^{m+\frac12}_{ \rho}+\left[\widetilde{\mat X}^{m-\frac12} \cdot \mat e_1-P^h\left(\widetilde{\mat X}^{m-\frac12} \cdot \mat e_1\right)\right]\frac{\widehat{\mat d}^{m+\frac12}_{ \rho}-\widehat{\mat d}^{m-\frac12}_{\rho}}{\Delta t}\nn\\
	&=\left[\frac{\widetilde{\mat X}^{m+\frac12} \cdot \mat e_1-\widetilde{\mat X}^{m-\frac12} \cdot \mat e_1}{\Delta t}-P^h\frac{\widetilde{\mat X}^{m+\frac12} \cdot \mat e_1-\widetilde{\mat X}^{m-\frac12} \cdot \mat e_1}{\Delta t}\right]\widehat{\mat d}^{m+\frac12}_{\rho}+\left[\widetilde{\mat X}^{m-\frac12} \cdot \mat e_1-P^h\left(\widetilde{\mat X}^{m-\frac12} \cdot \mat e_1\right)\right]\frac{\widehat{\mat d}^{m+\frac12}_{\rho}-\widehat{\mat d}^{m-\frac12}_{\rho}}{\Delta t}. 
\end{align}
Then, using \eqref{eqn:inter_error} and \eqref{eqn:ph_error} in \eqref{eqn:cn_con_pf35},  thanks to \eqref{eqn:inter_error}, \eqref{eqn:assum2}, \eqref{eqn:cn_con_pf10} and Taylor's formula, we have 
\begin{align}\label{eqn:cn_con_pf36}
\left\|\frac{\mat g^{m+\frac12}-\mat g^{m-\frac12}}{\Delta t}\right\|_{L^2}
\leq 
&\left\|\frac{\widetilde{\mat X}^{m+\frac12} \cdot \mat e_1-\widetilde{\mat X}^{m-\frac12} \cdot \mat e_1}{\Delta t}-P^h\frac{\widetilde{\mat X}^{m+\frac12} \cdot \mat e_1-\widetilde{\mat X}^{m-\frac12} \cdot \mat e_1}{\Delta t}\right\|_{L^2}
\left\|\widehat{\mat d}^{m+\frac12}_{\rho}\right\|_{L^\infty}\nn\\
&+\left\|\widetilde{\mat X}^{m-\frac12} \cdot \mat e_1-P^h\left(\widetilde{\mat X}^{m-\frac12} \cdot \mat e_1\right)\right\|_{L^\infty}\left\|\frac{\widehat{\mat d}^{m+\frac12}_{\rho}-\widehat{\mat d}^{m-\frac12}_{\rho}}{\Delta t}\right\|_{L^2}\nn\\
\leq & Ch^2\left|\frac{\widetilde{\mat X}^{m+\frac12}-\widetilde{\mat X}^{m-\frac12} }{\Delta t}\right|_{H^1}\left|\widehat{\mat x}^{m+\frac12}\right|_{W^{2,\infty}}
+Ch^2\left|\widetilde{\mat X}^{m-\frac12}\right|_{W^{1, \infty}}\left|\frac{\widehat{\mat x}^{m+\frac12}-\widehat{\mat x}^{m-\frac12}}{\Delta t}\right|_{H^2}\nn\\
\leq & Ch^2\left|\frac{\widetilde{\mat X}^{m+\frac12}-\widetilde{\mat X}^{m-\frac12} }{\Delta t}\right|_{H^1}
+Ch^2
\leq Ch^2\left[\left|\frac{\widetilde{\mat x}_\Pi^{m+\frac12}-\widetilde{\mat x}_\Pi^{m-\frac12} }{\Delta t}\right|_{H^1}+\left|\frac{\widetilde{\mat E}^{m+\frac12}-\widetilde{\mat E}^{m-\frac12} }{\Delta t}\right|_{H^1}\right]+Ch^2\nn\\
\leq &  Ch^2\bigg[\Delta t^{-1}\left(\Delta t^2+h^2\right)+1\bigg]+Ch^2\leq Ch^2,
\end{align}
provided that $h\leq \gamma_3\sqrt{\Delta t}$ with suitably selected positive constant $\gamma_3$. 
Using \eqref{eqn:cn_con_pf33}, \eqref{eqn:cn_con_pf34} and \eqref{eqn:cn_con_pf36} in 
\eqref{eqn:cn_con_pf31}, we get 
\begin{align}\label{eqn:cn_con_pf37}
	\Delta t\sum_{m=1}^n\mathbb T_{2, 1}\left(D_t\mat E^{m+\frac12}\right)
	\leq \epsilon\left\|\mat E_\rho^{n+1}\right\|_{L^2}^2
	+C_\epsilon\Delta t\sum_{m=2}^n\left\|\mat E_\rho^{m}\right\|_{L^2}^2
	+C\left\|\mat E_\rho^{1}\right\|_{L^2}^2 +C_\epsilon h^4. 
\end{align}
For the term involving $\mathbb T_{2, 2}\left(D_t\mat E^{m+\frac12}\right)$, using integration by parts, Taylor's formula, \eqref{eqn:cn_con_pf10}, \eqref{eqn:cn_con_pf10x} and \eqref{eqn:assum1x}, we obtain 
\begin{align}\label{eqn:cn_con_pf38}
	&\Delta t\sum_{m=1}^n\mathbb T_{2, 2}\left(D_t\mat E^{m+\frac12}\right)=		\Delta t\sum_{m=1}^n\left(\widetilde{\mat X}^{m+\frac12} \cdot \mat e_1\left[\widehat{\mat x}^{m+\frac12}_{\rho}-\mat x^{m+\frac12}_\rho\right], D_t\mat E^{m+\frac12}_\rho\right)\nn\\
	&=-	\Delta t\sum_{m=1}^n\left(\widetilde{\mat X}_\rho^{m+\frac12} \cdot \mat e_1\left[\widehat{\mat x}^{m+\frac12}_{\rho}-\mat x^{m+\frac12}_\rho\right], D_t\mat E^{m+\frac12}\right)-	\Delta t\sum_{m=1}^n\left(\widetilde{\mat X}^{m+\frac12} \cdot \mat e_1\left[\widehat{\mat x}^{m+\frac12}_{\rho\rho}-\mat x^{m+\frac12}_{\rho\rho}\right], D_t\mat E^{m+\frac12}\right)\nn\\
	&\leq \Delta t\sum_{m=1}^n\left\|\widetilde{\mat X}_\rho^{m+\frac12} \cdot \mat e_1\right\|_{L^\infty}\left\|\widehat{\mat x}^{m+\frac12}_{\rho}-\mat x^{m+\frac12}_\rho\right\|_{L^2} \left\|D_t\mat E^{m+\frac12}\right\|_{L^2}
	+\Delta t\sum_{m=1}^n\left\|\widetilde{\mat X}^{m+\frac12} \cdot \mat e_1\right\|_{L^\infty}\left\|\widehat{\mat x}^{m+\frac12}_{\rho\rho}-\mat x^{m+\frac12}_{\rho\rho}\right\|_{L^2} \left\|D_t\mat E^{m+\frac12}\right\|_{L^2}\nn\\
	&\leq C\Delta t^3\sum_{m=1}^n\left\|D_t\mat E^{m+\frac12}\right\|_{L^2}
	\leq \epsilon\Delta t\sum_{m=1}^n\left\|D_t\mat E^{m+\frac12}\right\|_{L^2}^2 +C_\epsilon\Delta t^4. 
\end{align}
Furthermore, for the term involving $\mathbb T_{2, 3}\left(D_t\mat E^{m+\frac12}\right)$, by virtue of integration by parts, we have 
\begin{align}\label{eqn:cn_con_pf39}
	&\Delta t\sum_{m=1}^n\mathbb T_{2, 3}\left(D_t\mat E^{m+\frac12}\right)=		\Delta t\sum_{m=1}^n\left(\left[\widetilde{\mat X}^{m+\frac12} \cdot \mat e_1-\mat x^{m+\frac12}\cdot\mat e_1\right]\mat x^{m+\frac12}_\rho, D_t\mat E^{m+\frac12}_\rho\right)\nn\\
	& =	-\Delta t\sum_{m=1}^n\left(\left[\widetilde{\mat E}^{m+\frac12}+\widetilde{\mat d}^{m+\frac12}+\left(\mat x^{m+\frac12}-\widetilde{\mat x}^{m+\frac12}\right)\right] \cdot \mat e_1\mat x^{m+\frac12}_\rho, D_t\mat E^{m+\frac12}_\rho\right)\nn\\
	& =	\Delta t\sum_{m=1}^n\left(\mat\psi^{m+\frac12}, D_t\mat E^{m+\frac12}\right)
	-\Delta t\sum_{m=1}^n\left(\widetilde{\mat d}^{m+\frac12}\cdot \mat e_1\mat x^{m+\frac12}_\rho, D_t\mat E^{m+\frac12}_\rho\right),
\end{align}
where 
\begin{align*}
\mat \psi^{m+\frac12}:=	\left[\widetilde{\mat E}_\rho^{m+\frac12}+\left(\mat x_\rho^{m+\frac12}-\widetilde{\mat x}_\rho^{m+\frac12}\right)\right] \cdot \mat e_1\mat x^{m+\frac12}_\rho+	\left[\widetilde{\mat E}^{m+\frac12}+\left(\mat x^{m+\frac12}-\widetilde{\mat x}^{m+\frac12}\right)\right] \cdot \mat e_1\mat x^{m+\frac12}_{\rho\rho}.
\end{align*}
Obviously, using \eqref{eqn:assum2} and Taylor's formula, we have 
\begin{align}\label{eqn:cn_con_pf40}
&	\Delta t\sum_{m=1}^n\left(\mat\psi^{m+\frac12}, D_t\mat E^{m+\frac12}\right)\leq 
	\Delta t\sum_{m=1}^n\left\|\mat\psi^{m+\frac12}\right\|_{L^2} \left\|D_t\mat E^{m+\frac12}\right\|_{L^2}\nn\\
&	\leq C
	\Delta t\sum_{m=1}^n\left(\left\|\widetilde{\mat E}_\rho^{m+\frac12}\right\|_{L^2}+\left\|\widetilde{\mat E}^{m+\frac12}\right\|_{L^2}+\Delta t^2\right) \left\|D_t\mat E^{m+\frac12}\right\|_{L^2}\nn\\
	&\leq \epsilon	\Delta t\sum_{m=1}^n \left\|D_t\mat E^{m+\frac12}\right\|_{L^2}^2+C_\epsilon\Delta t\sum_{m=1}^n\left\|\mat E_\rho^{m}\right\|_{L^2}^2+C_\epsilon\Delta t\sum_{m=1}^n\left\|\mat E^{m}\right\|_{L^2}^2+C_\epsilon\Delta t^4. 
\end{align}
For the second term on the right-hand side of \eqref{eqn:cn_con_pf39}, we have 
\begin{align}\label{eqn:cn_con_pf41}
	-\Delta t\sum_{m=1}^n\left(\widetilde{\mat d}^{m+\frac12}\cdot \mat e_1\mat x^{m+\frac12}_\rho, D_t\mat E^{m+\frac12}_\rho\right)
	=&-\left(\widetilde{\mat d}^{n+\frac12}\cdot \mat e_1\mat x^{n+\frac12}_\rho, \mat E^{n+1}_\rho\right)
	+\Delta t\sum_{m=2}^n\left(\frac{\widetilde{\mat d}^{m+\frac12}\cdot \mat e_1\mat x^{m+\frac12}_\rho-\widetilde{\mat d}^{m-\frac12}\cdot \mat e_1\mat x^{m-\frac12}_\rho}{\Delta t}, \mat E_\rho^m\right)\nn\\
	&+\left(\widetilde{\mat d}^{\frac32}\cdot \mat e_1\mat x^{\frac32}_\rho, \mat E^{1}_\rho\right). 
\end{align}
Thanks to \eqref{eqn:inter_error}, \eqref{eqn:assum2} and Taylor's formula, we obtain 
\begin{align}\label{eqn:cn_con_pf42}
&	\left\|\frac{\widetilde{\mat d}^{m+\frac12}\cdot \mat e_1\mat x^{m+\frac12}_\rho-\widetilde{\mat d}^{m-\frac12}\cdot \mat e_1\mat x^{m-\frac12}_\rho}{\Delta t}\right\|_{L^2}
	=	\left\|\frac{\widetilde{\mat d}^{m+\frac12}\cdot \mat e_1-\widetilde{\mat d}^{m-\frac12}\cdot \mat e_1}{\Delta t}\mat x^{m+\frac12}_\rho
	+\widetilde{\mat d}^{m-\frac12}\cdot \mat e_1\frac{\mat x^{m+\frac12}_\rho-\mat x^{m-\frac12}_\rho}{\Delta t}\right\|_{L^2}\nn\\
	&\leq \left\|\frac{\widetilde{\mat d}^{m+\frac12}-\widetilde{\mat d}^{m-\frac12}}{\Delta t}\right\|_{L^2}\left\|\mat x^{m+\frac12}_\rho\right\|_{L^\infty}+\left\|\widetilde{\mat d}^{m-\frac12}\right\|_{L^2}\left\|\frac{\mat x^{m+\frac12}_\rho-\mat x^{m-\frac12}_\rho}{\Delta t}\right\|_{L^\infty}\nn\\
&	\leq Ch^2\left|\frac{\widetilde{\mat x}^{m+\frac12}-\widetilde{\mat x}^{m-\frac12}}{\Delta t}\right|_{H^2}\left|\mat x^{m+\frac12}\right|_{H^2}
+Ch^2\left|\widetilde{\mat x}^{m-\frac12}\right|_{H^2}\left|\frac{{\mat x}^{m+\frac12}-{\mat x}^{m-\frac12}}{\Delta t}\right|_{H^2}\leq Ch^2. 
\end{align}
Using \eqref{eqn:cn_con_pf42} in \eqref{eqn:cn_con_pf41}, there obviously holds that 
\begin{align}\label{eqn:cn_con_pf43}
	-\Delta t\sum_{m=1}^n\left(\widetilde{\mat d}^{m+\frac12}\cdot \mat e_1\mat x^{m+\frac12}_\rho, D_t\mat E^{m+\frac12}_\rho\right)\leq \epsilon\left\|\mat E_\rho^{n+1}\right\|_{L^2}^2
	+C_\epsilon\Delta t\sum_{m=2}^n\left\|\mat E_\rho^{m}\right\|_{L^2}^2
	+C\left\|\mat E_\rho^{1}\right\|_{L^2}^2 +C_\epsilon h^4. 
\end{align}
Taking \eqref{eqn:cn_con_pf40} and \eqref{eqn:cn_con_pf43} in \eqref{eqn:cn_con_pf39} gives that 
\begin{align}\label{eqn:cn_con_pf44}
	&\Delta t\sum_{m=1}^n\mathbb T_{2, 3}\left(D_t\mat E^{m+\frac12}\right)\leq 
	\epsilon\left\|\mat E_\rho^{n+1}\right\|_{L^2}^2 +\epsilon	\Delta t\sum_{m=1}^n \left\|D_t\mat E^{m+\frac12}\right\|_{L^2}^2+C_\epsilon\Delta t\sum_{m=1}^n\left\|\mat E^{m}\right\|_{H^1}^2
	+C\left\|\mat E_\rho^{1}\right\|_{L^2}^2 +C_\epsilon h^4. 
\end{align}
Combining \eqref{eqn:cn_con_pf37}, \eqref{eqn:cn_con_pf38} and \eqref{eqn:cn_con_pf44}, we derive 
\begin{align}\label{eqn:cn_con_pf45}
	\Delta t\sum_{m=1}^n \mathbb T_{2}\left(D_t\mat E^{m+\frac12}\right)\leq \epsilon\left\|\mat E_\rho^{n+1}\right\|_{L^2}^2+
	\epsilon \Delta t\sum_{m=1}^n\left\|D_t\mat E^{m+\frac12}\right\|_{L^2}^2
	+C_\epsilon\Delta t\sum_{m=1}^n\left\|\mat E^{m}\right\|_{H^1}^2
	+C\left\|\mat E_\rho^{1}\right\|_{L^2}^2 +C_\epsilon \left(\Delta t^4+h^4\right). 
\end{align}
In what follows, we estimate the term involving $\mathbb T_{3}\left(D_t\mat E^{m+\frac12}\right)$ in \eqref{eqn:cn_con_pf1}. From \eqref{eqn:cn_con_pf19}, we have 
\begin{align}\label{eqn:cn_con_pf46}
&\Delta t\sum_{m=1}^n \mathbb T_{3}\left(D_t\mat E^{m+\frac12}\right)=	\Delta t\sum_{m=1}^n\left(D_t\mat E^{m+\frac12}\cdot\mat e_1, -2\widetilde{\mat d}^{m+\frac12}_\rho\cdot \mat x_\rho^{m+\frac12}
+u^{m+\frac12}\right).
\end{align}
Following a similar derivation process as that for $\mathbb{T}_{1, 3}$, we can obtain 
\begin{align}\label{eqn:cn_con_pf47}
	\Delta t\sum_{m=1}^n \mathbb T_{3}\left(D_t\mat E^{m+\frac12}\right)\leq 
	\epsilon\left\|\mat E_\rho^{n+1}\right\|_{L^2}^2+
	\epsilon \Delta t\sum_{m=1}^n\left\|D_t\mat E^{m+\frac12}\right\|_{L^2}^2
	+C_\epsilon\Delta t\sum_{m=1}^n\left\|\mat E_\rho^{m}\right\|_{L^2}^2
	+C\left\|\mat E_\rho^{1}\right\|_{L^2}^2 +C_\epsilon \left(\Delta t^4+h^4\right). 
\end{align}
For the last term on the right-hand side of \eqref{eqn:cn_con_pf6}, from \eqref{eqn:cn_con_pf10x}, we can easily derive 
\begin{align}\label{eqn:cn_con_pf48}
	\frac12\left( 	\widetilde{\mat X}^{\frac32} \cdot \mat e_1, \left|\mat E_\rho^{1}\right|^2\right)\leq \frac12\left\|\widetilde{\mat X}^{\frac32}\right\|_{L^\infty}\left\|\mat E_\rho^{1}\right\|^2_{L^2}\leq C_0\left\|\mat E_\rho^{1}\right\|^2_{L^2}. 
\end{align}

Using \eqref{eqn:cn_con_pf11}, \eqref{eqn:cn_con_pf12}, \eqref{eqn:cn_con_pf28}, \eqref{eqn:cn_con_pf45}, \eqref{eqn:cn_con_pf47} and \eqref{eqn:cn_con_pf48} in  \eqref{eqn:cn_con_pf6}, by virtue of \eqref{eqn:ini}, and selecting sufficiently small $\epsilon$, we can conclude that 
\begin{align}\label{eqn:cn_con_pf49}
	\Delta t\sum_{m=1}^n\left\|D_t\mat E^{m+\frac12}\right\|_{L^2}^2
	+\left\|\mat E_\rho^{n+1}\right\|_{L^2}^2\leq C\Delta t\sum_{m=1}^n\left\|\mat E^{m}\right\|_{H^1}^2+
	C \left(\Delta t^4+h^4\right). 
\end{align}
Noting that the second term on the left-hand side of \eqref{eqn:cn_con_pf49} represents an \( H^1 \)-seminorm while the first term on the right-hand side represents an \( H^1 \)-norm, the Gronwall inequality cannot be directly applied.  
Since $\mat E^0=0$, we write 
\begin{align}\label{eqn:cn_con_pf50}
	\left\|\mat E^{n+1}\right\|_{L^2}^2
	=\sum_{m=0}^n\left(\left\|\mat E^{m+1}\right\|_{L^2}^2-\left\|\mat E^{m}\right\|_{L^2}^2\right)
	=2\Delta t\sum_{m=0}^n\left(D_t\mat E^{m+\frac12}, \widehat{\mat E}^{m+\frac12}\right)
	\leq \epsilon \Delta t\sum_{m=1}^n\left\|D_t\mat E^{m+\frac12}\right\|_{L^2}^2
	+C\Delta t \sum_{m=1}^{n+1}\left\|\mat E^{m}\right\|_{L^2}^2. 
\end{align}
Adding \eqref{eqn:cn_con_pf49} and \eqref{eqn:cn_con_pf50}, choosing a small $\epsilon$, we have 
\begin{align}\label{eqn:cn_con_pf51}
	\left\|\mat E^{n+1}\right\|_{H^1}^2\leq C\Delta t\sum_{m=1}^{n+1}\left\|\mat E^{m}\right\|_{H^1}^2 +C \left(\Delta t^4+h^4\right). 
\end{align}
Using the discrete Gronwall inequality in \eqref{eqn:cn_con_pf51}, we finally conclude that  
\begin{align}\label{eqn:cn_con_pf52}
		\left\|\mat E^{n+1}\right\|_{H^1}\leq C\left(\Delta t^2+h^2\right), 
\end{align}
provided that $\Delta t$ is sufficiently small. Therefore, we have completed the proof. 

\section{Error estimate for the BDF2 method}\label{sec4}
\setcounter{equation}{0}
In this section, we aim to demonstrate the convergence results \eqref{eqn:err1}-\eqref{eqn:err2} for the BDF2 method.
From \eqref{eqn:wf} and \eqref{eqn:cn}, we have the following error equation:
\begin{align}\label{eq:bdf_error}
	&	\left(\overline{\mat X}^{m+1} \cdot \mat e_1\mathbb D_t\mat E^{m+1}, \mat \eta^h\left|\overline{\mat X}^{m+1}_\rho\right|^2\right)
	+\left(\overline{\mat X}^{m+1} \cdot \mat e_1\mat E^{m+1}_\rho, \mat \eta^h_\rho\right)\nn\\
	& = \left[	\left(\overline{\mat X}^{m+1} \cdot \mat e_1\mathbb D_t\mat x_\Pi^{m+1}, \mat \eta^h\left|\overline{\mat X}^{m+1}_\rho\right|^2\right) -	\left(\mat x^{m+1}\cdot \mat e_1\mat x_t^{m+1}, \mat \eta^h\left|\mat x_\rho^{m+1}\right|^2\right)\right]
	+\left[\left(\overline{\mat X}^{m+1} \cdot \mat e_1\mat x^{m+1}_{\Pi, \rho}, \mat \eta^h_\rho\right)-\left(\mat x^{m+1}\cdot\mat e_1\mat x^{m+1}_\rho, \mat \eta^h_\rho\right)\right]\nn\\
	&~~~~+\left(\mat \eta^h\cdot\mat e_1, \left|\overline{\mat X}^{m+1}_\rho\right|^2-\left|\mat x^{m+1}_\rho\right|^2\right)=:\sum_{i=1}^3\mathscr T_i\left(\mat \eta^h\right).
\end{align}
Taking $\mat \eta^h =\Delta t\mathbb D_t\mat E^{m+1} $ in \eqref{eq:bdf_error}, and summing over $m=1, \ldots, n$, we have 
\begin{align}\label{eq:bdf_con_pf1}
	\Delta t\sum_{m=1}^n\left(\overline{\mat X}^{m+1} \cdot \mat e_1\left|\mathbb D_t\mat E^{m+1}\right|^2, \left|\overline{\mat X}^{m+1}_\rho\right|^2\right)
+\Delta t\sum_{m=1}^n\left(\overline{\mat X}^{m+1} \cdot \mat e_1\mat E^{m+1}_\rho, \mathbb D_t\mat E^{m+1}_\rho\right)
=\Delta t\sum_{m=1}^n\sum_{i=1}^3\mathscr T_i\left(\mathbb D_t\mat E^{m+1}\right).
\end{align}
By simple calculation, we can obtain 
\begin{align}\label{eq:bdf_con_pf2}
	\mat E_\rho^{m+1}\cdot \mathbb D_t\mat E_\rho^{m+1}
	&=\frac{1}{4\Delta t}\left[\left(\left|\mat E_\rho^{m+1}\right|^2-\left|\mat E_\rho^{m}\right|^2\right)+\left(\left|2\mat E_\rho^{m+1}-\mat E_\rho^m\right|^2-\left|2\mat E_\rho^{m}-\mat E_\rho^{m-1}\right|^2\right)+\left|\mat E_\rho^{m+1}-2\mat E_\rho^m+\mat E_\rho^{m-1}\right|^2\right]\nn\\
	&=:\frac14D_tF^{m+\frac12}+\frac{\left|\mat E_\rho^{m+1}-2\mat E_\rho^m+\mat E_\rho^{m-1}\right|^2}{4\Delta t},
\end{align}
where $F^m=\left|\mat E_\rho^{m}\right|^2+\left|2\mat E_\rho^{m}-\mat E_\rho^{m-1}\right|^2$. 
Similar as \eqref{eqn:cn_con_pf8}, we have 
\begin{align}\label{eq:bdf_con_pf3}
	\overline{\mat X}^{m+1} \cdot \mat e_1\geq \frac{C_1}{2}. 
\end{align}
From \eqref{eq:bdf_con_pf2} and \eqref{eq:bdf_con_pf3}, we obtain 
\begin{align}\label{eq:bdf_con_pf4}
	\Delta t\sum_{m=1}^n\left(\overline{\mat X}^{m+1} \cdot \mat e_1\mat E^{m+1}_\rho, \mathbb D_t\mat E^{m+1}_\rho\right)
&	=	\frac{\Delta t}{4}\sum_{m=1}^n\left(\overline{\mat X}^{m+1} \cdot \mat e_1, D_tF^{m+\frac12}\right)
	+\frac14\sum_{m=1}^n\left(\overline{\mat X}^{m+1} \cdot \mat e_1, \left|\mat E_\rho^{m+1}-2\mat E_\rho^m+\mat E_\rho^{m-1}\right|^2\right). 
\end{align}
For the first term on the right-hand side of \eqref{eq:bdf_con_pf4}, there holds 
\begin{align}\label{eq:bdf_con_pf5}
	\frac{\Delta t}{4}\sum_{m=1}^n\left(\overline{\mat X}^{m+1} \cdot \mat e_1, D_tF^{m+\frac12}\right)
	= \frac14\left(\overline{\mat X}^{n+1}\cdot\mat e_1, F^{n+1}\right)
	-\frac{\Delta t}{4}\sum_{m=2}^n\left(D_t\overline{\mat X}^{m+\frac12}\cdot\mat e_1, F^m\right)
	-\frac{1}{4}\left(\overline{\mat X}^2\cdot\mat e_1, F^1\right).
\end{align} 
Substituting \eqref{eq:bdf_con_pf4} and \eqref{eq:bdf_con_pf5} into \eqref{eq:bdf_con_pf1} gives that 
\begin{align}\label{eq:bdf_con_pf6}
	&	\Delta t\sum_{m=1}^n\left(\overline{\mat X}^{m+1} \cdot \mat e_1\left|\mathbb D_t\mat E^{m+1}\right|^2, \left|\overline{\mat X}^{m+1}_\rho\right|^2\right)
	+ \frac14\left(\overline{\mat X}^{n+1}\cdot\mat e_1, F^{n+1}\right)+\frac14\sum_{m=1}^n\left(\overline{\mat X}^{m+1} \cdot \mat e_1, \left|\mat E_\rho^{m+1}-2\mat E_\rho^m+\mat E_\rho^{m-1}\right|^2\right)\nn\\
	&=\frac{\Delta t}{4}\sum_{m=2}^n\left(D_t\overline{\mat X}^{m+\frac12}\cdot\mat e_1, F^m\right)+\Delta t\sum_{m=1}^n\sum_{i=1}^3\mathscr T_i\left(\mathbb D_t\mat E^{m+1}\right)+\frac{1}{4}\left(\overline{\mat X}^2\cdot\mat e_1, F^1\right).
\end{align}
Similar as \eqref{eqn:cn_con_pf10} and \eqref{eqn:cn_con_pf10x}, we have 
\begin{align}\label{eq:bdf_con_pf7}
	\frac{c_0}{2}\leq \left|\overline{\mat X}^{m+1}_\rho\right|\leq 2C_0,\qquad 
	\left\|\overline{\mat X}^{m+1}\right\|_{L^\infty}\leq 2C_0,\qquad m\geq 1. 
\end{align}
Using \eqref{eq:bdf_con_pf3} and \eqref{eq:bdf_con_pf7} on the left-hand side of \eqref{eq:bdf_con_pf6}, we have 
\begin{align}\label{eq:bdf_con_pf8}
&	\Delta t\sum_{m=1}^n\left(\overline{\mat X}^{m+1} \cdot \mat e_1\left|\mathbb D_t\mat E^{m+1}\right|^2, \left|\overline{\mat X}^{m+1}_\rho\right|^2\right)
	+ \frac14\left(\overline{\mat X}^{n+1}\cdot\mat e_1, F^{n+1}\right)+\frac14\sum_{m=1}^n\left(\overline{\mat X}^{m+1} \cdot \mat e_1, \left|\mat E_\rho^{m+1}-2\mat E_\rho^m+\mat E_\rho^{m-1}\right|^2\right)\nn\\
	& \geq \frac{C_1c_0^2}{8}\Delta t\sum_{m=1}^n\left\|\mathbb D_t\mat E^{m+1}\right\|_{L^2}^2+\frac{C_1}{8}\left(1, F^{n+1}\right)
	+\frac{C_1}{8}\sum_{m=1}^n\left\|\mat E_\rho^{m+1}-2\mat E_\rho^m+\mat E_\rho^{m-1}\right\|_{L^2}^2. 
\end{align}
For the first term on the right-hand side of \eqref{eq:bdf_con_pf6}, by using \eqref{eqn:inequ1} and the mathematical induction, we have 
\begin{align}\label{eq:bdf_con_pf9}
	&\frac{\Delta t}{4}\sum_{m=2}^n\left(D_t\overline{\mat X}^{m+\frac12}\cdot\mat e_1, F^m\right)=
		\frac{\Delta t}{4}\sum_{m=2}^n\left(D_t\overline{\mat x}_\Pi^{m+\frac12}\cdot\mat e_1, F^m\right)
		-
		\frac{\Delta t}{4}\sum_{m=2}^n\left(D_t\overline{\mat E}^{m+\frac12}\cdot\mat e_1, F^m\right)\nn\\
		&\leq \frac{\Delta t}{4}\sum_{m=2}^n\left(\left\|D_t\overline{\mat x}_\Pi^{m+\frac12}\right\|_{L^\infty}+\left\|D_t\overline{\mat E}^{m+\frac12}\right\|_{L^\infty}\right)\left(1, F^m\right)\leq C\Delta t\sum_{m=2}^n\left(\left\|D_t\overline{\mat x}_\Pi^{m+\frac12}\right\|_{L^\infty}+\left\|D_t\overline{\mat E}^{m+\frac12}\right\|_{H^1}\right)\left(1, F^m\right)\nn\\
		&\leq C\Delta t\sum_{m=2}^n
		\left[1+\Delta t^{-1}\left(\Delta t^2+h^2\right)\right]\left(1, F^m\right)\leq C\Delta t\sum_{m=2}^n\left(1, F^m\right),
\end{align}
where we have assumed that $h\leq \gamma_2\sqrt{\Delta t}$ with suitably selected positive constant $\gamma_2$. We then estimate the second term on the right-hand side of \eqref{eq:bdf_con_pf6}. We split 
\begin{align}\label{eq:bdf_con_pf10}
\mathscr T_1\left(\mathbb D_t\mat E^{m+1}\right)& = \left(\overline{\mat X}^{m+1} \cdot \mat e_1\mathbb D_t\mat x_\Pi^{m+1}, \mathbb D_t\mat E^{m+1} \left|\overline{\mat X}^{m+1}_\rho\right|^2\right) -	\left(\mat x^{m+1}\cdot \mat e_1\mat x_t^{m+1},  \mathbb D_t\mat E^{m+1} \left|\mat x_\rho^{m+1}\right|^2\right)\nn\\
& = \left(\left[\overline{\mat X}^{m+1}-\mat x^{m+1}\right] \cdot \mat e_1\mathbb D_t\mat x_\Pi^{m+1}, \mathbb D_t\mat E^{m+1} \left|\overline{\mat X}^{m+1}_\rho\right|^2\right) 
+\left(\mat x^{m+1} \cdot \mat e_1\left[\mathbb D_t\mat x_\Pi^{m+1}-\mat x_t^{m+1}\right], \mathbb D_t\mat E^{m+1} \left|\overline{\mat X}^{m+1}_\rho\right|^2\right) \nn\\
&~~~~+\left(\mat x^{m+1}\cdot \mat e_1\mat x_t^{m+1},  \mathbb D_t\mat E^{m+1} \left[\left|\overline{\mat X}^{m+1}_\rho\right|^2-\left|\mat x_\rho^{m+1}\right|^2\right]\right)\nn\\
&=:\sum_{i=1}^3\mathscr T_{1, i}\left(\mathbb D_t\mat E^{m+1}\right). 
\end{align}
By virtue of \eqref{eq:bdf_con_pf7}, \eqref{eqn:inequ1} and \eqref{eqn:assum2}, we have 
\begin{align}\label{eq:bdf_con_pf11}
	&\mathscr T_{1, 1}\left(\mathbb D_t\mat E^{m+1}\right)
	=\left(\left[\overline{\mat X}^{m+1}-\mat x^{m+1}\right] \cdot \mat e_1\mathbb D_t\mat x_\Pi^{m+1}, \mathbb D_t\mat E^{m+1} \left|\overline{\mat X}^{m+1}_\rho\right|^2\right) \nn\\
	&\leq \left\|\overline{\mat X}^{m+1}-\mat x^{m+1}\right\|_{L^2}\left\|\mathbb D_t\mat x_\Pi^{m+1}\right\|_{L^\infty}
	\left\|\mathbb D_t\mat E^{m+1}\right\|_{L^2}
	\left\|\overline{\mat X}^{m+1}_\rho\right\|_{L^\infty}^2\nn\\
	&\leq \epsilon\left\|\mathbb D_t\mat E^{m+1}\right\|_{L^2}^2+C_\epsilon\left\|\overline{\mat X}^{m+1}-\mat x^{m+1}\right\|_{L^2}^2.
\end{align}
To facilitate subsequent analysis, we define $\mathbb F^m=\left|\mat E^{m}\right|^2+\left|2\mat E^{m}-\mat E^{m-1}\right|^2$. Thanks to 
\begin{align}\label{eq:bdf_con_pf12}
\overline{\mat X}^{m+1}-\mat x^{m+1}
	=\bigg[\overline{\mat X}^{m+1}-\overline{\mat x}^{m+1}\bigg]+\bigg[\overline{\mat x}^{m+1}-\mat x^{m+1}\bigg]
	=-2\left(\mat E^m+\mat d^m\right)+\left(\mat E^{m-1}+\mat d^{m-1}\right)+\bigg[\overline{\mat x}^{m+1}-\mat x^{m+1}\bigg], 
\end{align}
and by using \eqref{eqn:inter_error} and Taylor's formula, we obtain
\begin{align}\label{eq:bdf_con_pf13}
	\Delta t\sum_{m=1}^n \mathscr T_{1, 1}\left(\mathbb D_t\mat E^{m+1}\right)\leq 
	\epsilon\,\Delta t\sum_{m=1}^n\left\|\mathbb D_t\mat E^{m+1}\right\|_{L^2}^2
	+C_\epsilon\Delta t\sum_{m=1}^n\left(1, \mathbb F^m\right)
	+C_\epsilon\left(\Delta t^4+h^4\right). 
\end{align}
Using \eqref{eqn:assum2} and \eqref{eq:bdf_con_pf7}, we have 
\begin{align}\label{eq:bdf_con_pf14}
&\Delta t\sum_{m=1}^n\mathscr T_{1, 2}\left(\mathbb D_t\mat E^{m+1}\right) =	\Delta t\sum_{m=1}^n\left(\mat x^{m+1} \cdot \mat e_1\left[\mathbb D_t\mat x_\Pi^{m+1}-\mat x_t^{m+1}\right], \mathbb D_t\mat E^{m+1} \left|\overline{\mat X}^{m+1}_\rho\right|^2\right) \nn\\
&\leq \Delta t \sum_{m=1}^n\left\|\mat x^{m+1}\right\|_{L^\infty}\left\|\mathbb D_t\mat x_\Pi^{m+1}-\mat x_t^{m+1}\right\|_{L^2}
\left\|\mathbb D_t\mat E^{m+1}\right\|_{L^2}\left\|\overline{\mat X}^{m+1}_\rho\right\|_{L^\infty}^2\nn\\
&\leq \epsilon\Delta t \sum_{m=1}^n\left\|\mathbb D_t\mat E^{m+1}\right\|_{L^2}^2+C_\epsilon\Delta t \sum_{m=1}^n
\left\|\mathbb D_t\mat x_\Pi^{m+1}-\mat x_t^{m+1}\right\|_{L^2}^2. 
\end{align}
By using Taylor's formula, we have 
\begin{align}\label{eq:bdf_con_pf15}
&	\mathbb D_t\mat x_\Pi^{m+1}-\mat x_t^{m+1}=\frac{3}{2\Delta t}\int_{t_m}^{t_{m+1}}\left[\mat x_{\Pi, t}-\mat x_t^{m+1}\right]dt
	-\frac{1}{2\Delta t}\int_{t_{m-1}}^{t_{m}}\left[\mat x_{\Pi, t}-\mat x_t^{m+1}\right]dt\nn\\
	& =\frac{3}{2\Delta t}\int_{t_m}^{t_{m+1}}\left[\mat x_{\Pi, t}-\mat x_t\right]dt
	+\frac{3}{2\Delta t}\int_{t_m}^{t_{m+1}}\left[\mat x_{t}-\mat x_t^{m+1}\right]dt
	-\frac{1}{2\Delta t}\int_{t_{m-1}}^{t_{m}}\left[\mat x_{\Pi, t}-\mat x_t\right]dt-\frac{1}{2\Delta t}\int_{t_{m-1}}^{t_{m}}\left[\mat x_{t}-\mat x_t^{m+1}\right]dt\nn\\
	& = \frac{3}{2\Delta t}\int_{t_m}^{t_{m+1}}\left[\mat x_{\Pi, t}-\mat x_t\right]dt
	+\frac{3}{2\Delta t}\int_{t_m}^{t_{m+1}}\left[\left(t-t_{m+1}\right)\mat x_{tt}^{m+1}+\frac{\left(t-t_{m+1}\right)^2}{2}\mat x_{ttt}\left(\cdot, \mat \xi_1\right)\right]dt\nn\\
	&~~~~-\frac{1}{2\Delta t}\int_{t_{m-1}}^{t_{m}}\left[\mat x_{\Pi, t}-\mat x_t\right]dt-\frac{1}{2\Delta t}\int_{t_{m-1}}^{t_{m}}\left[\left(t-t_{m+1}\right)\mat x_{tt}^{m+1}+\frac{\left(t-t_{m+1}\right)^2}{2}\mat x_{ttt}\left(\cdot, \mat \xi_2\right)\right]dt\nn\\
	& = \frac{3}{2\Delta t}\int_{t_m}^{t_{m+1}}\left[\mat x_{\Pi, t}-\mat x_t\right]dt-\frac{1}{2\Delta t}\int_{t_{m-1}}^{t_{m}}\left[\mat x_{\Pi, t}-\mat x_t\right]dt+\frac{3}{4\Delta t}\int_{t_m}^{t_{m+1}}\left(t-t_{m+1}\right)^2\mat x_{ttt}\left(\cdot, \mat \xi_1\right)dt\nn\\
	&~~~~-\frac{1}{4\Delta t}\int_{t_{m-1}}^{t_{m}}\left(t-t_{m+1}\right)^2\mat x_{ttt}\left(\cdot, \mat \xi_2\right)dt. 
\end{align}
Therefore, from \eqref{eqn:inter_error} and \eqref{eqn:assum2}, we get 
\begin{align}\label{eq:bdf_con_pf16}
	\left\|\mathbb D_t\mat x_\Pi^{m+1}-\mat x_t^{m+1}\right\|_{L^2}\leq  &\frac{3}{2\Delta t}\int_{t_m}^{t_{m+1}}\left\|\mat x_{\Pi, t}-\mat x_t\right\|_{L^2}dt+\frac{1}{2\Delta t}\int_{t_{m-1}}^{t_{m}}\left\|\mat x_{\Pi, t}-\mat x_t\right\|_{L^2}dt+\frac{3}{4\Delta t}\int_{t_m}^{t_{m+1}}\left(t-t_{m+1}\right)^2dt\max_{t\in [t_m, t_{m+1}]}\left\|\mat x_{ttt}\right\|_{L^2}\nn\\
	&+\frac{1}{4\Delta t}\int_{t_{m-1}}^{t_{m}}\left(t-t_{m+1}\right)^2dt\max_{t\in [t_{m-1}, t_{m}]}\left\|\mat x_{ttt}\right\|_{L^2}\nn\\
	\leq & Ch^2\max_{t\in [t_m, t_{m+1}]}\left|\mat x_t\right|_{H^2}+Ch^2\max_{t\in [t_{m-1}, t_{m}]}\left|\mat x_t\right|_{H^2}
	+\frac{\Delta t^2}{4}\max_{t\in [t_m, t_{m+1}]}\left\|\mat x_{ttt}\right\|_{L^2}+\frac{7\Delta t^2}{12}\max_{t\in [t_{m-1}, t_{m}]}\left\|\mat x_{ttt}\right\|_{L^2}\nn\\
	\leq & C\left(\Delta t^2+h^2\right). 
\end{align}
Using \eqref{eq:bdf_con_pf16} in \eqref{eq:bdf_con_pf14} gives that 
\begin{align}\label{eq:bdf_con_pf17}
	\Delta t\sum_{m=1}^n\mathscr T_{1, 2}\left(\mathbb D_t\mat E^{m+1}\right)\leq \epsilon\Delta t \sum_{m=1}^n\left\|\mathbb D_t\mat E^{m+1}\right\|_{L^2}^2+C_\epsilon\left(\Delta t^4+h^4\right).
\end{align}
To estimate the term $\mathscr T_{1, 3}\left(\mathbb D_t\mat E^{m+1}\right)$, we write 
\begin{align}\label{eq:bdf_con_pf18}
	\left|\overline{\mat X}^{m+1}_\rho\right|^2-\left|\mat x_\rho^{m+1}\right|^2
	=-2\overline{\mat d}_\rho^{m+1}\cdot\mat x_\rho^{m+1}+\mathcal U^{m+1}, 
\end{align}
where 
\begin{align*}
	\mathcal U^{m+1}:=\left(\mat x_\rho^{m+1}-\overline{\mat x}^{m+1}_\rho
	+\overline{\mat E}^{m+1}_\rho+\overline{\mat d}^{m+1}_\rho
	\right)^2
	-2\overline{\mat E}^{m+1}_\rho\cdot \mat x_\rho^{m+1}
	+2\left(\overline{\mat x}^{m+1}_\rho-\mat x_\rho^{m+1}\right)\cdot \mat x_\rho^{m+1}.
\end{align*}
Similar as \eqref{eqn:cn_con_pf20}, there holds 
\begin{align}\label{eq:bdf_con_pf19}
		\left\|\mathcal U^{m+1}\right\|_{L^2}\leq C\left(\left\|\overline{\mat E}_\rho^{m+1}\right\|_{L^2}+\Delta t^2+h^2\right). 
\end{align}
Using integration by parts, we have 
\begin{align}\label{eq:bdf_con_pf20}
&\mathscr T_{1, 3}\left(\mathbb D_t\mat E^{m+1}\right) = -2\left(\mat x^{m+1}\cdot \mat e_1\mat x_t^{m+1},  \mathbb D_t\mat E^{m+1} \overline{\mat d}_\rho^{m+1}\cdot\mat x_\rho^{m+1}\right)+\left(\mat x^{m+1}\cdot \mat e_1\mat x_t^{m+1},  \mathbb D_t\mat E^{m+1}\mathcal U^{m+1}\right)\nn\\
& = 2\left(\mat x_\rho^{m+1}\cdot \mat e_1\mat x_t^{m+1},  \mathbb D_t\mat E^{m+1} \overline{\mat d}^{m+1}\cdot\mat x_{\rho}^{m+1}\right)
+
2\left(\mat x^{m+1}\cdot \mat e_1\mat x_{t\rho}^{m+1},  \mathbb D_t\mat E^{m+1} \overline{\mat d}^{m+1}\cdot\mat x_{\rho}^{m+1}\right)\nn\\
&~~~~+2\left(\mat x^{m+1}\cdot \mat e_1\mat x_t^{m+1},  \mathbb D_t\mat E_\rho^{m+1} \overline{\mat d}^{m+1}\cdot\mat x_{\rho}^{m+1}\right)
+2\left(\mat x^{m+1}\cdot \mat e_1\mat x_t^{m+1},  \mathbb D_t\mat E^{m+1} \overline{\mat d}^{m+1}\cdot\mat x_{\rho\rho}^{m+1}\right)\nn\\
&~~~~+\left(\mat x^{m+1}\cdot \mat e_1\mat x_t^{m+1},  \mathbb D_t\mat E^{m+1}\mathcal U^{m+1}\right)\nn\\
&=:\left(\mathbb D_t\mat E^{m+1}, \mat{\mathcal V}^{m+1}\right)+\left(\mathbb D_t\mat E_\rho^{m+1}, \mat{\mathcal W}^{m+1}\right). 
\end{align}
where 
\begin{align*}
	& \mat{\mathcal V}^{m+1}:=2\mat x_\rho^{m+1}\cdot \mat e_1\overline{\mat d}^{m+1}\cdot\mat x_{\rho}^{m+1}\mat x_t^{m+1}+2\mat x^{m+1}\cdot \mat e_1\overline{\mat d}^{m+1}\cdot\mat x_{\rho}^{m+1}\mat x_{t\rho}^{m+1}
	+2\mat x^{m+1}\cdot \mat e_1\overline{\mat d}^{m+1}\cdot\mat x_{\rho\rho}^{m+1}\mat x_t^{m+1}+\mat x^{m+1}\cdot \mat e_1\mathcal U^{m+1}\mat x_t^{m+1},\nn\\
	&  \mat{\mathcal W}^{m+1}:=2\mat x^{m+1}\cdot \mat e_1\overline{\mat d}^{m+1}\cdot\mat x_{\rho}^{m+1}\mat x_t^{m+1}. 
\end{align*}
Obviously, from \eqref{eq:bdf_con_pf19}, and using \eqref{eqn:inter_error} and \eqref{eqn:assum2}, it holds that 
\begin{align}\label{eq:bdf_con_pf21}
	\left\|\mat{\mathcal V}^{m+1}\right\|_{L^2}\leq  C\left(\left\|\overline{\mat E}_\rho^{m+1}\right\|_{L^2}+\Delta t^2+h^2\right)\leq  C\left(\sqrt{\left(1, F^m\right)}+\Delta t^2+h^2\right), 
\end{align}
which further implies that 
\begin{align}\label{eq:bdf_con_pf21x}
\Delta t\sum_{m=1}^n\left(\mathbb D_t\mat E^{m+1}, \mathcal V^{m+1}\right)\leq \epsilon \Delta t\sum_{m=1}^n\left\|\mathbb D_t\mat E^{m+1}\right\|_{L^2}^2+C_\epsilon\Delta t\sum_{m=1}^n\left(1, F^m\right)+C_\epsilon\left(\Delta t^4+h^4\right).
\end{align}
In addition, we have 
\begin{align}\label{eq:bdf_con_pf22}
	&\Delta t\sum_{m=1}^n\left(\mathbb D_t\mat E_\rho^{m+1}, \mathcal W^{m+1}\right)=\frac{3\Delta t}{2}\sum_{m=1}^n\left(D_t\mat E_\rho^{m+\frac12}, \mathcal W^{m+1}\right)
	-\frac{\Delta t}{2}\sum_{m=1}^n\left(D_t\mat E_\rho^{m-\frac12}, \mathcal W^{m+1}\right)\nn\\
	&=\left(\widetilde{\mat E}_\rho^{n+\frac32}, \mat{\mathcal W}^{n+1}\right)
	-\Delta t\sum_{m=2}^n\left(\widetilde{\mat E}_\rho^{m+\frac12}, D_t\mat{\mathcal W}^{m+\frac12}\right)-\frac32\left(\mat E_\rho^1, \mat{\mathcal W}^2\right). 
\end{align}
Similar as \eqref{eqn:cn_con_pf24} and \eqref{eqn:cn_con_pf25}, we also have 
\begin{align}\label{eq:bdf_con_pf23}
	\left\| \mat{\mathcal W}^{n+1}\right\|_{L^2}\leq Ch^2;\qquad 
	\left\| D_t\mat{\mathcal W}^{m+\frac12}\right\|_{L^2}\leq Ch^2,\quad 
	2\leq m\leq n;\qquad \left\| \mat{\mathcal W}^{2}\right\|_{L^2}\leq Ch^2. 
\end{align}
Therefore, from \eqref{eq:bdf_con_pf22} and \eqref{eq:bdf_con_pf23}, and since 
\begin{align}\label{eq:bdf_con_pf23add}
	\left\|\widetilde{\mat E}_\rho^{j+\frac12}\right\|_{L^2}
	=\left\|\frac{\mat E_\rho^{j}}{2}+\frac{2\mat E_\rho^{j}-\mat E_\rho^{j-1}}{2}\right\|_{L^2}
	\leq 
	\frac12\left(\left\|\mat E_\rho^{j}\right\|_{L^2}+\left\|2\mat E_\rho^{j}-\mat E_\rho^{j-1}\right\|_{L^2}\right)\leq 
	\frac{\sqrt{2}}{2}\sqrt{\left(1, F^{j}\right)},\qquad j=2,\ldots,n+1,
\end{align}
we can obtain 
\begin{align}\label{eq:bdf_con_pf24}
	\Delta t\sum_{m=1}^n\left(\mathbb D_t\mat E_\rho^{m+1}, \mathcal W^{m+1}\right)\leq 
	\epsilon\left(1, F^{n+1}\right)+C\Delta t\sum_{m=2}^n\left(1, F^{m}\right)+C\left\|\mat E_\rho^1\right\|_{L^2}^2+
	C_\epsilon\left(\Delta t^4+h^4\right).
\end{align}
By using \eqref{eq:bdf_con_pf21x} and \eqref{eq:bdf_con_pf24}, we derive 
\begin{align}\label{eq:bdf_con_pf25}
\Delta t\sum_{m=1}^n\mathscr T_{1, 3}\left(\mathbb D_t\mat E^{m+1}\right)\leq 	\epsilon\left(1, F^{n+1}\right)+
\epsilon \Delta t\sum_{m=1}^n\left\|\mathbb D_t\mat E^{m+1}\right\|_{L^2}^2+C_\epsilon\Delta t\sum_{m=1}^n\left(1, F^m\right)+C\left\|\mat E_\rho^1\right\|_{L^2}^2+C_\epsilon\left(\Delta t^4+h^4\right).
\end{align}
From \eqref{eq:bdf_con_pf13}, \eqref{eq:bdf_con_pf17} and \eqref{eq:bdf_con_pf25}, we conclude that 
\begin{align}\label{eq:bdf_con_pf26}
	\Delta t\sum_{m=1}^n	\mathscr T_{1}\left(\mathbb D_t\mat E^{m+1}\right)\leq& 
	\epsilon\left(1, F^{n+1}\right)+
	\epsilon \Delta t\sum_{m=1}^n\left\|\mathbb D_t\mat E^{m+1}\right\|_{L^2}^2+C_\epsilon\Delta t\sum_{m=1}^n\left(1, F^m\right)+C_\epsilon\Delta t\sum_{m=1}^n\left(1, \mathbb F^m\right)\nn\\
	&+C\left\|\mat E_\rho^1\right\|_{L^2}^2+C_\epsilon\left(\Delta t^4+h^4\right).
\end{align}
In what follows, we investigate the terms involving $\mathscr T_2\left(\mathbb D_t\mat E^{m+1}\right)$. 
From \eqref{eqn:cn_con_pf29}, we can split 
\begin{align}\label{eq:bdf_con_pf27}
	\mathscr T_2\left(\mathbb D_t\mat E^{m+1}\right)&=\left(\overline{\mat X}^{m+1} \cdot \mat e_1\mat x^{m+1}_{\Pi, \rho}, \mathbb D_t\mat E^{m+1}_\rho\right)-\left(\mat x^{m+1}\cdot\mat e_1\mat x^{m+1}_\rho, \mathbb D_t\mat E_\rho^{m+1}\right)\nn\\
	&=-\left(\left[\overline{\mat X}^{m+1} \cdot \mat e_1-P^h\left(\overline{\mat X}^{m+1} \cdot \mat e_1\right)\right]
	\mat d^{m+1}_{\rho}, \mathbb D_t\mat E^{m+1}_\rho\right)+\left(\left[\overline{\mat X}^{m+1}-\overline{\mat x}^{m+1}\right] \cdot \mat e_1\mat x^{m+1}_{\rho}, \mathbb D_t\mat E^{m+1}_\rho\right)\nn\\
	&~~~~
	+\left(\left[\overline{\mat x}^{m+1}-\mat x^{m+1}\right] \cdot \mat e_1\mat x^{m+1}_{\rho}, \mathbb D_t\mat E^{m+1}_\rho\right)\nn\\
	&=:\sum_{i=1}^3\mathscr T_{2, i}\left(\mathbb D_t\mat E^{m+1}\right). 
\end{align}
For convenience, we denote 
\begin{align}\label{eq:bdf_con_pf28}
	\Delta t\sum_{m=1}^n\mathscr T_{2, 1}\left(\mathbb D_t\mat E^{m+1}\right)=-\Delta t\sum_{m=1}^n\left(\left[\overline{\mat X}^{m+1} \cdot \mat e_1-P^h\left(\overline{\mat X}^{m+1} \cdot \mat e_1\right)\right]
	\mat d^{m+1}_{\rho}, \mathbb D_t\mat E^{m+1}_\rho\right)
	=:-\Delta t\sum_{m=1}^n\left(\mat{\mathcal G}^{m+1}, \mathbb D_t\mat E^{m+1}_\rho\right). 
\end{align}
Using \eqref{eq:bdf_con_pf22}, we further obtain 
\begin{align}\label{eq:bdf_con_pf29}
	\Delta t\sum_{m=1}^n\mathscr T_{2, 1}\left(\mathbb D_t\mat E^{m+1}\right)= -\left(\mat{\mathcal G}^{n+1}, \widetilde{\mat E}_\rho^{n+\frac32}\right)+\Delta t\sum_{m=2}^n\left(D_t\mat {\mathcal G}^{m+\frac12}, \widetilde{\mat E}_\rho^{m+\frac12}\right)
	+\frac32\left(\mat{\mathcal G}^2, \mat E_\rho^1\right). 
\end{align}
Similar as \eqref{eqn:cn_con_pf33}-\eqref{eqn:cn_con_pf36}, we have 
\begin{align}\label{eq:bdf_con_pf30}
	\left\|\mat{\mathcal G}^{n+1}\right\|_{L^2}\leq Ch^2;\quad 
	\left\|D_t\mat {\mathcal G}^{m+\frac12}\right\|_{L^2}\leq Ch^2,\quad 2\leq m\leq n;\quad \left\|\mat{\mathcal G}^2\right\|_{L^2}\leq Ch^2. 
\end{align}
Hence, using \eqref{eq:bdf_con_pf23add} and \eqref{eq:bdf_con_pf30}, we obtain 
\begin{align}\label{eq:bdf_con_pf31}
	&\Delta t\sum_{m=1}^n\mathscr T_{2, 1}\left(\mathbb D_t\mat E^{m+1}\right)\leq \left\|\mat{\mathcal G}^{n+1}\right\|_{L^2}
	\left\|\widetilde{\mat E}_\rho^{n+\frac32}\right\|_{L^2}
	+
	\Delta t\sum_{m=2}^n\left\|D_t\mat {\mathcal G}^{m+\frac12}\right\|_{L^2} \left\|\widetilde{\mat E}_\rho^{m+\frac12}\right\|_{L^2}
	+\frac32\left\|\mat{\mathcal G}^2\right\|_{L^2}\left\|\mat E_\rho^1\right\|_{L^2}\nn\\
	&\leq \frac{\sqrt{2}}{2}\left\|\mat{\mathcal G}^{n+1}\right\|_{L^2}\sqrt{\left(1, F^{n+1}\right)}+
	\frac{\sqrt{2}}{2}\Delta t\sum_{m=2}^n\left\|D_t\mat {\mathcal G}^{m+\frac12}\right\|_{L^2} \sqrt{\left(1, F^{m}\right)}+\frac32\left\|\mat{\mathcal G}^2\right\|_{L^2}\left\|\mat E_\rho^1\right\|_{L^2}\nn\\
	&\leq \epsilon\left(1, F^{n+1}\right)
	+C\Delta t\sum_{m=2}^n\left(1, F^{m}\right)+C\left\|\mat E_\rho^1\right\|_{L^2}^2+C_\epsilon h^4.
\end{align}
Next, for the term involving $\mathscr T_{2, 2}\left(\mathbb D_t\mat E^{m+1}\right)$, using integration by parts, we denote
\begin{align}\label{eq:bdf_con_pf32}
	&\Delta t\sum_{m=1}^n\mathscr T_{2, 2}\left(\mathbb D_t\mat E^{m+1}\right)=\Delta t\sum_{m=1}^n\left(\left[\overline{\mat X}^{m+1}-\overline{\mat x}^{m+1}\right] \cdot \mat e_1\mat x^{m+1}_{\rho}, \mathbb D_t\mat E^{m+1}_\rho\right)\nn\\
	&=-\Delta t\sum_{m=1}^n\left(\overline{\mat d}^{m+1} \cdot \mat e_1\mat x^{m+1}_{\rho}, \mathbb D_t\mat E^{m+1}_\rho\right)+\Delta t\sum_{m=1}^n\left(\overline{\mat E}_\rho^{m+1} \cdot \mat e_1\mat x^{m+1}_{\rho}+\overline{\mat E}^{m+1} \cdot \mat e_1\mat x^{m+1}_{\rho\rho}, \mathbb D_t\mat E^{m+1}\right)\nn\\
	& =: -\Delta t\sum_{m=1}^n\left(\mat{\mathcal L}^{m+1}, \mathbb D_t\mat E^{m+1}_\rho\right)+\Delta t\sum_{m=1}^n\left(\overline{\mat E}_\rho^{m+1} \cdot \mat e_1\mat x^{m+1}_{\rho}+\overline{\mat E}^{m+1} \cdot \mat e_1\mat x^{m+1}_{\rho\rho}, \mathbb D_t\mat E^{m+1}\right). 
\end{align}
For the first term on the right-hand side of \eqref{eq:bdf_con_pf32}, by using \eqref{eq:bdf_con_pf22}, we have 
\begin{align}\label{eq:bdf_con_pf33}
	-\Delta t\sum_{m=1}^n\left(\mat{\mathcal L}^{m+1}, \mathbb D_t\mat E^{m+1}_\rho\right)=-\left(\mat{\mathcal L}^{n+1}, \widetilde{\mat E}_\rho^{n+\frac32}\right)+\Delta t\sum_{m=2}^n\left(D_t\mat {\mathcal L}^{m+\frac12}, \widetilde{\mat E}_\rho^{m+\frac12}\right)
	+\frac32\left(\mat{\mathcal L}^2, \mat E_\rho^1\right). 
\end{align}
Similar as \eqref{eqn:cn_con_pf33}-\eqref{eqn:cn_con_pf36}, there hold  
\begin{align}\label{eq:bdf_con_pf34}
	\left\|\mat{\mathcal L}^{n+1}\right\|_{L^2}\leq Ch^2;\quad 
	\left\|D_t\mat {\mathcal L}^{m+\frac12}\right\|_{L^2}\leq Ch^2,\quad 2\leq m\leq n;\quad \left\|\mat{\mathcal L}^2\right\|_{L^2}\leq Ch^2. 
\end{align}
Therefore, similar as \eqref{eq:bdf_con_pf31}, we have 
\begin{align}\label{eq:bdf_con_pf35}
	-\Delta t\sum_{m=1}^n\left(\mat{\mathcal L}^{m+1}, \mathbb D_t\mat E^{m+1}_\rho\right)\leq \epsilon\left(1, F^{n+1}\right)
	+C\Delta t\sum_{m=2}^n\left(1, F^{m}\right)+C\left\|\mat E_\rho^1\right\|_{L^2}^2+C_\epsilon h^4.
\end{align}
 Thanks to \eqref{eqn:assum2}, the second term on the right-hand side of \eqref{eq:bdf_con_pf32} can be bounded by 
\begin{align}\label{eq:bdf_con_pf36}
	&\Delta t\sum_{m=1}^n\left(\overline{\mat E}_\rho^{m+1} \cdot \mat e_1\mat x^{m+1}_{\rho}+\overline{\mat E}^{m+1} \cdot \mat e_1\mat x^{m+1}_{\rho\rho}, \mathbb D_t\mat E^{m+1}\right)\nn\\
  &	\leq \epsilon \Delta t\sum_{m=1}^n\left\| \mathbb D_t\mat E^{m+1}\right\|_{L^2}^2
	+C_\epsilon\Delta t\sum_{m=1}^n\left\| \overline{\mat E}_\rho^{m+1}\right\|_{L^2}^2+C_\epsilon\Delta t\sum_{m=1}^n\left\| \overline{\mat E}^{m+1}\right\|_{L^2}^2\nn\\
	&\leq \epsilon \Delta t\sum_{m=1}^n\left\| \mathbb D_t\mat E^{m+1}\right\|_{L^2}^2
	+C_\epsilon\Delta t\sum_{m=1}^n\left(1, F^m\right)+C_\epsilon\Delta t\sum_{m=1}^n\left(1, \mathbb F^m\right). 
\end{align}
Using \eqref{eq:bdf_con_pf35} and \eqref{eq:bdf_con_pf36} in \eqref{eq:bdf_con_pf32} gives that 
\begin{align}\label{eq:bdf_con_pf37}
	&\Delta t\sum_{m=1}^n\mathscr T_{2, 2}\left(\mathbb D_t\mat E^{m+1}\right)\leq \epsilon\left(1, F^{n+1}\right)
+ \epsilon \Delta t\sum_{m=1}^n\left\| \mathbb D_t\mat E^{m+1}\right\|_{L^2}^2	+C_\epsilon\Delta t\sum_{m=1}^n\left(1, F^m\right)+C_\epsilon\Delta t\sum_{m=1}^n\left(1, \mathbb F^m\right)+C\left\|\mat E_\rho^1\right\|_{L^2}^2+C_\epsilon h^4.
\end{align}
For the term involving $\mathscr T_{2, 3}\left(\mathbb D_t\mat E^{m+1}\right)$, using integration by parts and Taylor's formula, we have 
\begin{align}\label{eq:bdf_con_pf38}
\Delta t\sum_{m=1}^n\mathscr T_{2, 3}\left(\mathbb D_t\mat E^{m+1}\right)&=	-\Delta t\sum_{m=1}^n\left(\left[\overline{\mat x}_\rho^{m+1}-\mat x_\rho^{m+1}\right] \cdot \mat e_1\mat x^{m+1}_{\rho}, \mathbb D_t\mat E^{m+1}\right)-\Delta t\sum_{m=1}^n\left(\left[\overline{\mat x}^{m+1}-\mat x^{m+1}\right] \cdot \mat e_1\mat x^{m+1}_{\rho\rho}, \mathbb D_t\mat E^{m+1}\right)\nn\\
&\leq \epsilon \Delta t\sum_{m=1}^n\left\| \mathbb D_t\mat E^{m+1}\right\|_{L^2}^2+C_\epsilon\Delta t^4. 
\end{align}
Combining \eqref{eq:bdf_con_pf31}, \eqref{eq:bdf_con_pf37} and \eqref{eq:bdf_con_pf38}, we obtain 
\begin{align}\label{eq:bdf_con_pf39}
	\Delta t\sum_{m=1}^n\mathscr T_{2}\left(\mathbb D_t\mat E^{m+1}\right)\leq &\epsilon\left(1, F^{n+1}\right)
	+ \epsilon \Delta t\sum_{m=1}^n\left\| \mathbb D_t\mat E^{m+1}\right\|_{L^2}^2	+C_\epsilon\Delta t\sum_{m=1}^n\left(1, \mathbb F^m\right)+C_\epsilon\Delta t\sum_{m=1}^n\left(1, F^m\right)\nn\\
&	+C\left\|\mat E_\rho^1\right\|_{L^2}^2+C_\epsilon\left(\Delta t^4+ h^4\right). 
\end{align}
For the term with respect to $\mathscr T_{3}\left(\mathbb D_t\mat E^{m+1}\right)$, it follows from similar process as $\mathscr T_{1, 3}\left(\mathbb D_t\mat E^{m+1}\right)$ that 
\begin{align}\label{eq:bdf_con_pf40}
\Delta t\sum_{m=1}^n\mathscr T_{3}\left(\mathbb D_t\mat E^{m+1}\right)\leq 	\epsilon\left(1, F^{n+1}\right)+
\epsilon \Delta t\sum_{m=1}^n\left\|\mathbb D_t\mat E^{m+1}\right\|_{L^2}^2+C_\epsilon\Delta t\sum_{m=1}^n\left(1, F^m\right)+C\left\|\mat E_\rho^1\right\|_{L^2}^2+C_\epsilon\left(\Delta t^4+h^4\right).
\end{align}
For the last term on the right-hand side of \eqref{eq:bdf_con_pf6}, thanks to \eqref{eq:bdf_con_pf7}, we have 
\begin{align}\label{eq:bdf_con_pf41}
	\frac{1}{4}\left(\overline{\mat X}^2\cdot\mat e_1, F^1\right)\leq 
	\frac{1}{4}\left\|\overline{\mat X}^2\right\|_{L^\infty}\left(1, F^1\right)\leq \frac{5C_0}{2}\left\|\mat E_\rho^1\right\|_{L^2}^2. 
\end{align}

Using \eqref{eq:bdf_con_pf8}, \eqref{eq:bdf_con_pf26}, \eqref{eq:bdf_con_pf39}, \eqref{eq:bdf_con_pf40} and \eqref{eq:bdf_con_pf41} in \eqref{eq:bdf_con_pf7}, thanks to \eqref{eqn:ini}, by selecting sufficiently small $\epsilon$, we conclude that 
\begin{align}\label{eq:bdf_con_pf42}
	\Delta t\sum_{m=1}^n\left\|\mathbb D_t\mat E^{m+1}\right\|_{L^2}^2+
	\left(1, F^{n+1}\right)
	\leq C\Delta t\sum_{m=1}^n\left(1, \mathbb F^m\right)+C_\epsilon\Delta t\sum_{m=1}^n\left(1, F^m\right)
	+C\left(\Delta t^4+h^4\right).
\end{align}
Thanks to 
\begin{align}\label{eq:bdf_con_pf43}
	\left(\mat E^{m+1}, \mathbb D_t\mat E^{m+1}\right)
	&=\frac{1}{4\Delta t}\left[\left(\left\|\mat E^{m+1}\right\|_{L^2}^2-\left\|\mat E^{m}\right\|_{L^2}^2\right)+\left(\left\|2\mat E^{m+1}-\mat E^m\right\|_{L^2}^2-\left\|2\mat E^{m}-\mat E^{m-1}\right\|_{L^2}^2\right)+\left\|\mat E^{m+1}-2\mat E^m+\mat E^{m-1}\right\|_{L^2}^2\right]\nn\\
	&\geq \frac{1}{4\Delta t}\left[\left(\left\|\mat E^{m+1}\right\|_{L^2}^2-\left\|\mat E^{m}\right\|_{L^2}^2\right)+\left(\left\|2\mat E^{m+1}-\mat E^m\right\|_{L^2}^2-\left\|2\mat E^{m}-\mat E^{m-1}\right\|_{L^2}^2\right)\right]\nn\\
	&=\frac{\left(1, \mathbb F^{m+1}\right)-\left(1, \mathbb F^m\right)}{4\Delta t}, 
\end{align}
we have 
\begin{align}\label{eq:bdf_con_pf44}
	\left(1, \mathbb F^{n+1}\right)&=\sum_{m=1}^n\bigg[\left(1, \mathbb F^{m+1}\right)-\left(1, \mathbb F^m\right)\bigg]+\left(1, \mathbb F^1\right)\leq 4\Delta t\sum_{m=1}^n\left(\mat E^{m+1}, \mathbb D_t\mat E^{m+1}\right)+\left(1, \mathbb F^1\right)\nn\\
	&\leq \epsilon \Delta t\sum_{m=1}^n\left\|\mathbb D_t\mat E^{m+1}\right\|_{L^2}^2
	+C_\epsilon \Delta t\sum_{m=1}^n\left\|\mat E^{m+1}\right\|_{L^2}^2+\left(1, \mathbb F^1\right)\nn\\
	&\leq \epsilon \Delta t\sum_{m=1}^n\left\|\mathbb D_t\mat E^{m+1}\right\|_{L^2}^2
	+C_\epsilon \Delta t\sum_{m=1}^n\left(1, \mathbb F^{m+1}\right)+C\left\|\mat E^1\right\|_{L^2}^2. 
\end{align}
By choosing sufficient small $\Delta t$ in \eqref{eq:bdf_con_pf44}, we have
\begin{align}\label{eq:bdf_con_pf45}
	\left(1, \mathbb F^{n+1}\right)\leq \epsilon \Delta t\sum_{m=1}^n\left\|\mathbb D_t\mat E^{m+1}\right\|_{L^2}^2
	+C_\epsilon \Delta t\sum_{m=1}^n\left(1, \mathbb F^{m}\right)+C\left\|\mat E^1\right\|_{L^2}^2. 
\end{align}
Taking the sum of \eqref{eq:bdf_con_pf43} and \eqref{eq:bdf_con_pf45}, and selecting a sufficient small $\epsilon$, we obtain 
\begin{align}\label{eq:bdf_con_pf46}
	\left(1, \mathbb F^{n+1}\right)+\left(1, F^{n+1}\right)
	\leq C\Delta t\sum_{m=1}^n\bigg[\left(1, \mathbb F^m\right)+\left(1, F^m\right)\bigg]+C\left(\Delta t^4+h^4\right).
\end{align}
By using the discrete Gronwall inequality in \eqref{eq:bdf_con_pf46}, if $\Delta t$ is selected sufficiently small, we can obtain  
\begin{align}\label{eq:bdf_con_pf47}
	\left(1, \mathbb F^{n+1}\right)+\left(1, F^{n+1}\right)
	\leq C\left(\Delta t^4+h^4\right),
\end{align}
which immediately implies that 
\begin{align}\label{eq:bdf_con_pf48}
		\left\|\mat E^{n+1}\right\|_{H^1}\leq C\left(\Delta t^2+h^2\right). 
\end{align}
Therefore, we have completed the proof. 

\section{Numerical results}\label{sec5}
In this section we present several numerical experiments to test the CN method and the BDF2 method.

\begin{exa}
We in this example test the convergence order for the CN method and BDF2 method for an evolving torus. We add a right-hand source term $\mat f$ of the system \eqref{eqn:model} by selecting the exact solution 
\begin{equation}\label{eq:xx}
	\mat x(\rho, t) =
	\left(
	\begin{matrix}
		g(t)+\cos(2\pi\rho)\\
		\sin(2\pi\rho)
	\end{matrix}
	\right),\qquad g(t)=2+\sin(\pi t). 
\end{equation}
In the following tests, to check the spatial convergence order, we use a temporally refined discretization with $M=10000$; conversely, for testing temporal convergence order, a spatially refined grid with $J=50000$ is employed.
In this example, we set $T=1$. The
results in Tables \ref{tab1}-\ref{tab4} confirm the optimal convergence rates given in Theorem \ref{thm:main}.

\begin{table}[!h]
	\renewcommand{\arraystretch}{1.2}
	\caption{The errors and spatial convergence order of the CN method.}\label{tab1}
	\begin{center}
		\setlength{\tabcolsep}{5mm}{
			\begin{tabular}{|c|cccc|}\hline
				$h$ & $\max\limits_{m=0,\ldots,M}\left\|\mat x^m-\mat X^m\right\|_{L^2}$ & order & $\max\limits_{m=0,\ldots,M}\left|\mat x^m-\mat X^m\right|_{H^1}$& order \\ \hline
			 32&2.9849e-03&-- &6.1671e-01&--
				
				\\
				64&7.4381e-04 &2.0047 &3.0841e-01&0.9998
				\\
				128& 1.8582e-04& 2.0010&1.5421e-01& 0.9999 \\
				256& 4.6461e-05& 1.9998&7.7106e-02&1.0000\\
			512& 1.1631e-05& 1.9980&3.8553e-02&1.0000
				\\\hline
		\end{tabular}}
	\end{center}
\end{table}

\begin{table}[!h]
	\renewcommand{\arraystretch}{1.2}
	\caption{The errors and temporal convergence order of the CN method.}\label{tab2}
	\begin{center}
		\setlength{\tabcolsep}{5mm}{
			\begin{tabular}{|c|cccc|}\hline
				$h$ & $\max\limits_{m=0,\ldots,M}\left\|\mat x^m-\mat X^m\right\|_{L^2}$ & order & $\max\limits_{m=0,\ldots,M}\left|\mat x^m-\mat X^m\right|_{H^1}$& order \\ \hline
				8&4.8655e-02&-- &1.7852e-01&--
				
				\\
				16&1.3066e-02 &1.8967 &3.7061e-02&2.2681
				\\
				32& 3.2908e-03& 1.9893&9.1971e-03&2.0106 \\
				64& 8.2149e-04& 2.0021&2.2903e-03&2.0056\\
				128& 2.0481e-04& 2.0039&6.8269e-04&1.7463
				\\\hline
		\end{tabular}}
	\end{center}
\end{table}

\begin{table}[!h]
	\renewcommand{\arraystretch}{1.2}
	\caption{The errors and spatial convergence order of the BDF2 method.}\label{tab3}
	\begin{center}
		\setlength{\tabcolsep}{5mm}{
			\begin{tabular}{|c|cccc|}\hline
				$h$ & $\max\limits_{m=0,\ldots,M}\left\|\mat x^m-\mat X^m\right\|_{L^2}$ & order & $\max\limits_{m=0,\ldots,M}\left|\mat x^m-\mat X^m\right|_{H^1}$& order \\ \hline
				32&2.9852e-03&-- &6.6171e-01&--
				
				\\
				64&7.4389e-04 &2.0046 &3.0841e-01&0.9998
				\\
				128& 1.8585e-04& 2.0010&1.5421e-01& 0.9999 \\
				256& 4.6476e-05& 1.9995&7.7106e-02&1.0000\\
				512& 1.1643e-05& 1.9997&3.8553e-02&1.0000
				\\\hline
		\end{tabular}}
	\end{center}
\end{table} 

\begin{table}[!h]
	\renewcommand{\arraystretch}{1.2}
	\caption{The errors and temporal convergence order of the BDF2 method.}\label{tab4}
	\begin{center}
		\setlength{\tabcolsep}{5mm}{
			\begin{tabular}{|c|cccc|}\hline
				$h$ & $\max\limits_{m=0,\ldots,M}\left\|\mat x^m-\mat X^m\right\|_{L^2}$ & order & $\max\limits_{m=0,\ldots,M}\left|\mat x^m-\mat X^m\right|_{H^1}$& order \\ \hline
				8&9.6879e-02&-- &3.5874e-01&--
				
				\\
				16&2.4075e-02 &2.0086 &7.4390e-02&2.2698
				\\
				32& 5.4847e-03&2.1341&1.6369e-02& 2.1842 \\
				64& 1.2957e-03& 2.0817&3.7843e-03&2.1128\\
				128& 3.1887e-04& 2.0454&9.7916e-04&1.9504
				\\\hline
		\end{tabular}}
	\end{center}
\end{table} 
\end{exa}

\begin{exa}
	Motivated by \cite{barrett2021finite}, we revisit the evolution of the initial surface defined by the set  
	\[
	\mS(0) := \left\{ \mat z \in \mathbb{R}^3: \left( 1 - \left| \mat z - (\mat z \cdot \mat e_1) \mat e_2 \right| \right)^2 + (\mat z \cdot \mat e_2)^2 = r^2, \quad 0 < r < 1 \right\}.
	\]  
	We denote \( T_r \) as the time at which the surface \( \mS(t) \) becomes singular. 
	As observed by Soner \& Souganidis \cite{soner1993singularities}, there exists a critical value \( r_0 \in (0, 1) \) such that for \( r \in (0, r_0) \), the solution contracts to a circle at time \( T_r \), whereas for \( r \in (r_0, 1) \), the solution closes the hole at time \( T_r \). We mainly do the following tests:
	\begin{itemize}
		\item Firstly, by setting \( r = 0.7 \), we observe from Fig. \ref{figure:Exa2_1} that the surface gradually closes up, eventually forming a genus-0 surface at \( t = 0.081 \), indicating the disappearance of the hole.  
		Additionally, we conduct the same numerical experiment with a smaller radius of \( r = 0.5 \). Unlike the case of \( r = 0.7 \), Fig. \ref{figure:Exa2_2} shows that the surface evolves by shrinking towards a circular shape, reaching this form at \( t = 0.136 \).
		\begin{figure}[!h]
			\centering
			\includegraphics[width=0.8\textwidth]{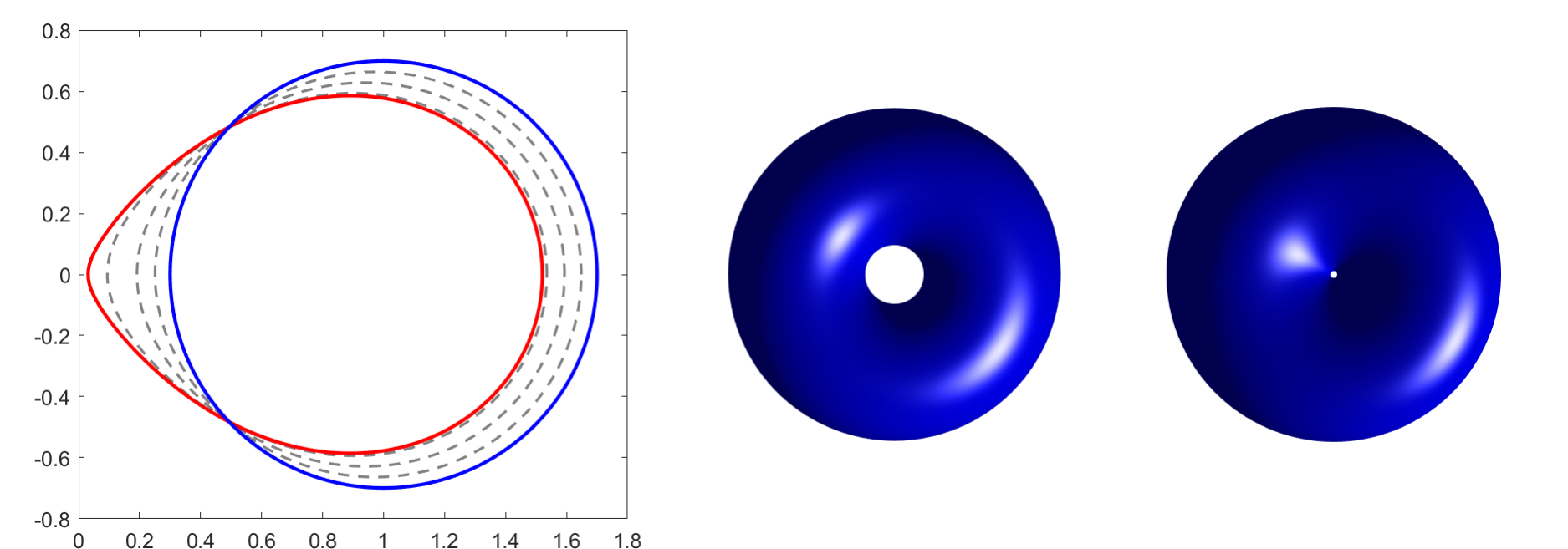}\\
			
			\includegraphics[width=0.8\textwidth]{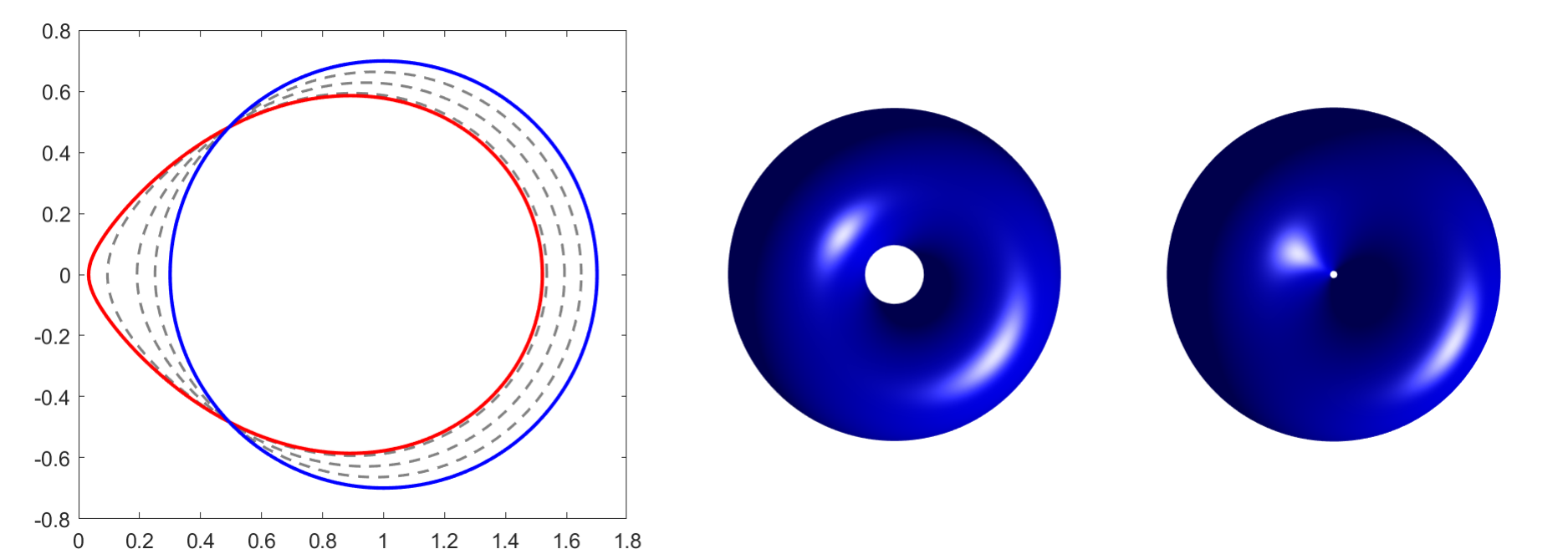}
			\caption{Evolution for a torus using the CN method (first row) and the BDF2 method (second row) with $r=0.7$. Plots at the times $t=0, 0.025, 0.05, 0.075, 0.081$, and visualizations of the axisymmetric surfaces generated by $t=0$ and $t=0.081$. Here, $J=512$, $\Delta t=10^{-4}$.}
			\label{figure:Exa2_1}
		\end{figure}
		
		\begin{figure}[!h]
			\centering
			\includegraphics[width=0.8\textwidth]{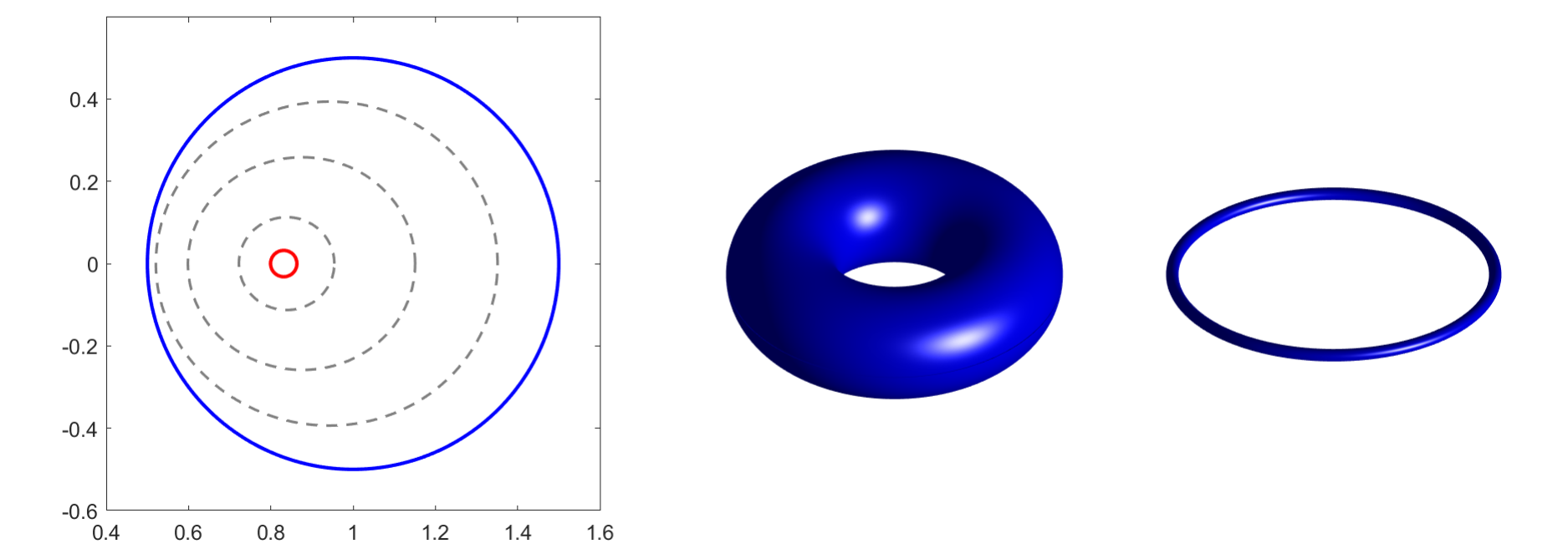}\\
			\includegraphics[width=0.8\textwidth]{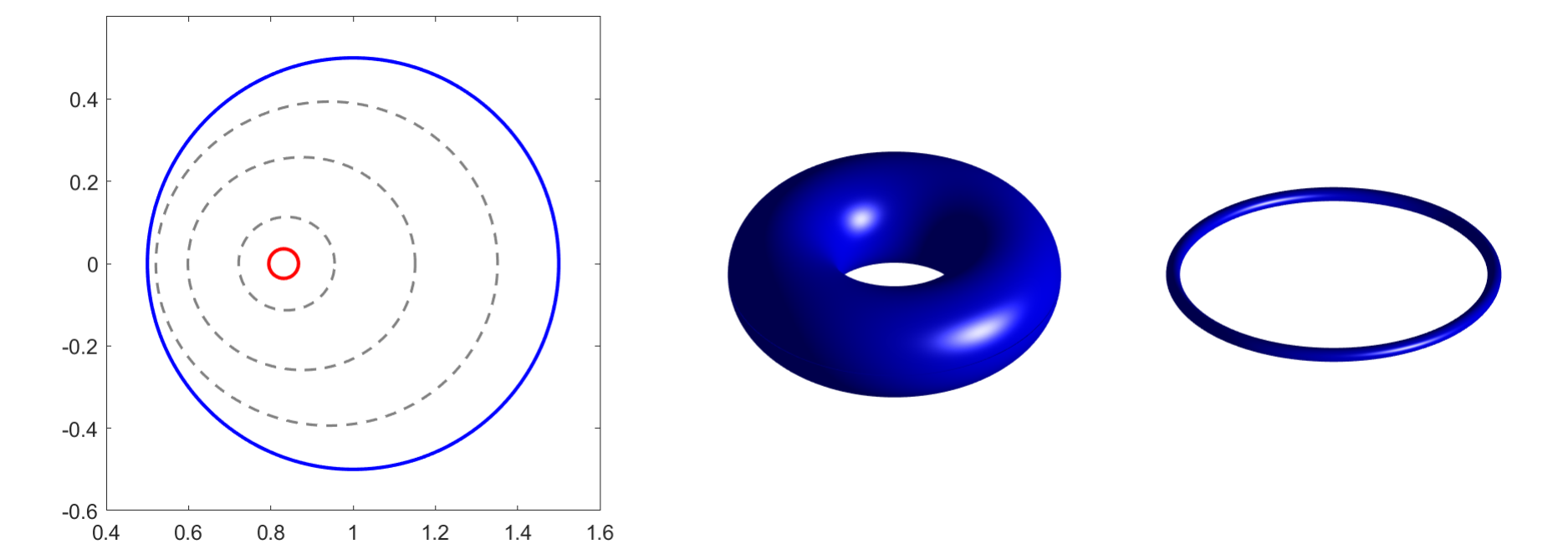}
			\caption{Evolution for a torus using the CN method (first row) and the BDF2 method (second row) with $r=0.7$. Plots at the times $t=0, 0.05, 0.1, 0.13, 0.136$, and visualizations of the axisymmetric surfaces generated by $t=0$ and $t=0.136$. Here, $J=512$, $\Delta t=10^{-4}$.}
			\label{figure:Exa2_2}
		\end{figure}

	    \item 
	   Secondly, we evaluate the mesh quality for the two types of second-order temporal methods. To this end, we define the mesh ratio as  
	   \[
	   \mathcal{R}^m := \frac{\max\limits_{j=1,\ldots,J} \left| \mathbf{X}^m(q_j) - \mathbf{X}^m(q_{j-1}) \right|}{\min\limits_{j=1,\ldots,J} \left| \mathbf{X}^m(q_j) - \mathbf{X}^m(q_{j-1}) \right|}.
	   \]  
	  As shown in Fig. \ref{figure:Exa2_3} for \( r = 0.7 \) and \( r = 0.5 \), the mesh quality of both methods remains relatively good and is consistent with that of the first-order temporal method \cite{barrett2021finite}. However, we emphasize that if a second-order time-stepping method is constructed based on the BGN approach using an extrapolation technique, the mesh quality would deteriorate significantly. Our final example further illustrates this issue.  
	  Among these figures, in the case of \( r = 0.5 \), we also observe a rapid increase in the mesh ratio, which may be attributed to the surface gradually approaching the \( z \)-axis over time.
	   
	    \begin{figure}[!h]
	    	\centering
	    	\includegraphics[width=0.45\textwidth]{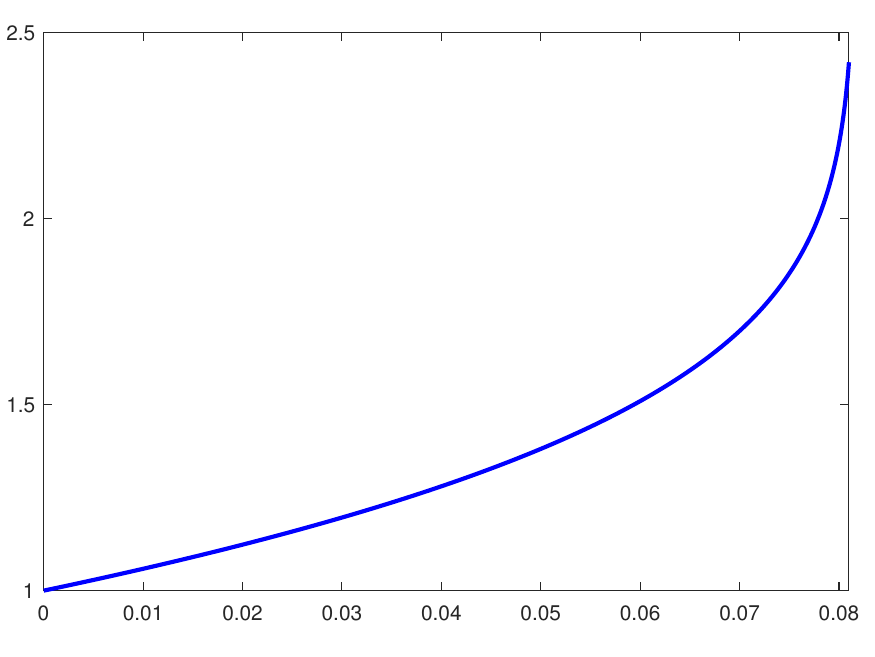}
	    	\includegraphics[width=0.45\textwidth]{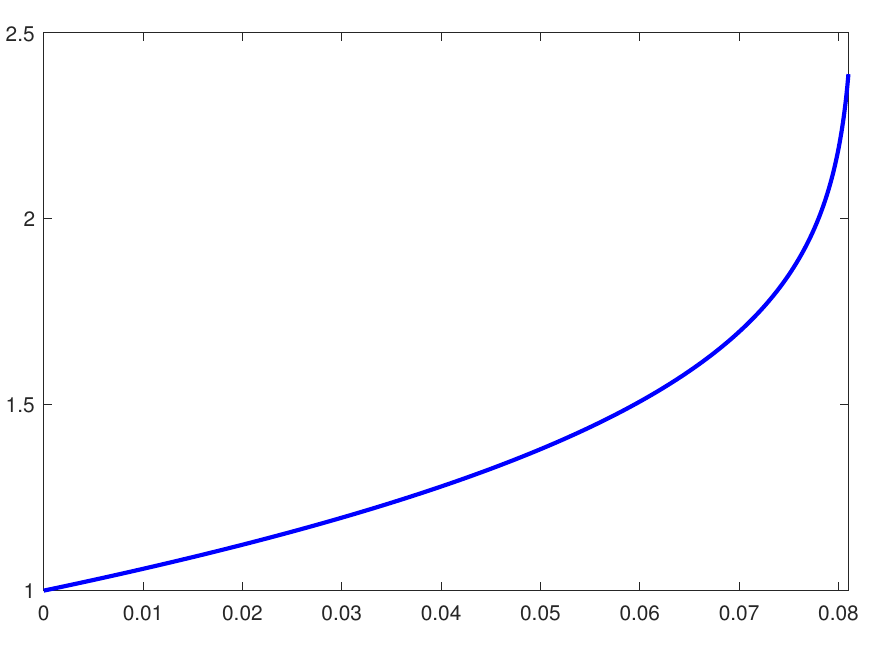}\\
	    	\includegraphics[width=0.45\textwidth]{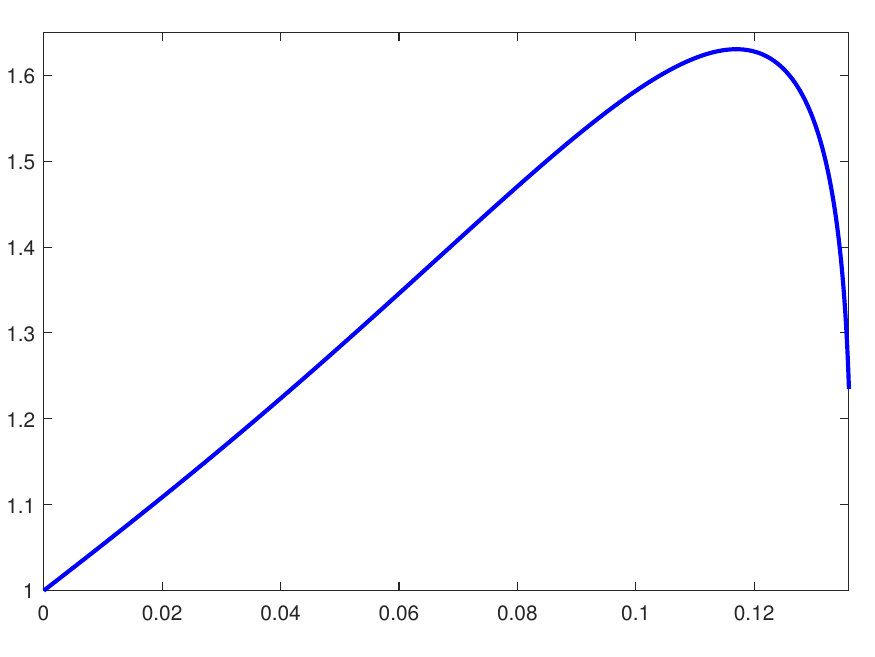}
	    	\includegraphics[width=0.45\textwidth]{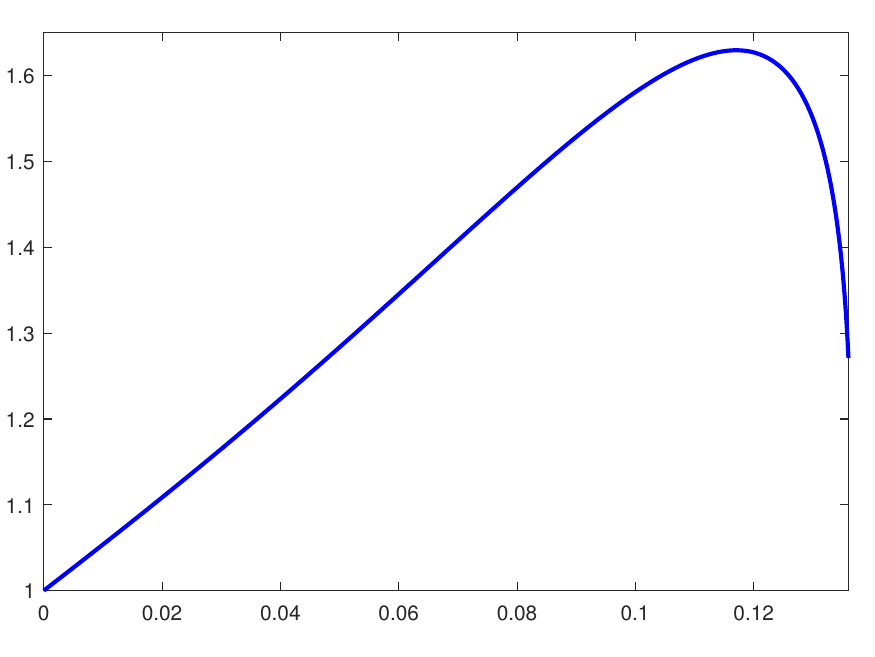}
	    	\caption{The mesh ratio $\mathcal R^m$ over time for the CN method (left column) and the BDF2 method (right column): $r=0.7$ (first row), $r=0.5$ (second row).}
	    	\label{figure:Exa2_3}
	    \end{figure}
	    \item Lastly, we numerically approximate the value of \( r_0 \) using the CN and BDF2 methods. To the best of our knowledge, the exact value of the critical radius \( r_0 \) remains unknown; however, Ishimura \cite{ishimura1993limit} and Ahara \& Ishimura \cite{ahara1991mean} have rigorously proven that \( r_0 \geq \frac{2}{3+\sqrt{5}} \approx 0.38 \).  
	    Subsequently, many researchers have numerically estimated the approximate value of \( r_0 \). For instance, Paolini \& Verdi \cite{paolini1992asymptotic} reported \( r_0 \approx 0.65 \), while Barrett et al. \cite{barrett2021finite} further refined the estimate to \( r_0 \in [0.64151, 0.64152] \). In this test, by setting \( J = 4096 \) and \( \Delta t = 5 \times 10^{-6} \), we aim to numerically compute \( r_0 \) using our proposed CN and BDF2 methods.  
	    Based on the interval \( r_0 \in [0.64151, 0.64152] \) given in \cite{barrett2021finite}, we plot the evolution of a torus with \( r=0.64151 \), \( r=0.6415125 \), \( r=0.641515 \), and \( r=0.64152 \) in Figs. \ref{figure:Exa2_4}–\ref{figure:Exa2_5}. From these results, we conclude that \( r_0 \in [0.64151, 0.6415125] \) for the CN method and \( r_0 \in [0.6415125, 0.641515] \) for the BDF2 method.

\begin{figure}[!htp]
	\centering
	\includegraphics[width=0.45\textwidth]{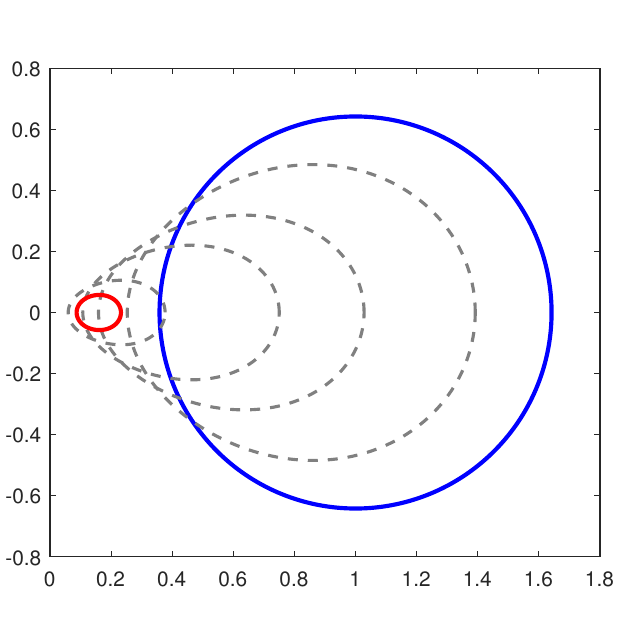}
	\includegraphics[width=0.45\textwidth]{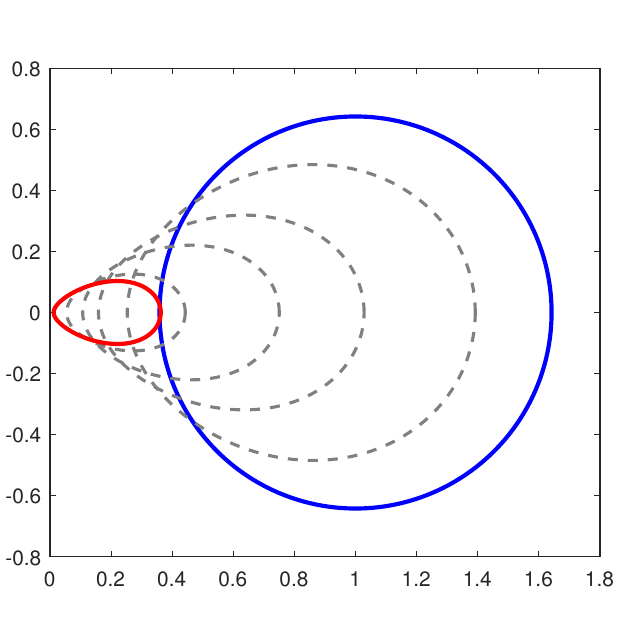}\\
	\includegraphics[width=0.45\textwidth]{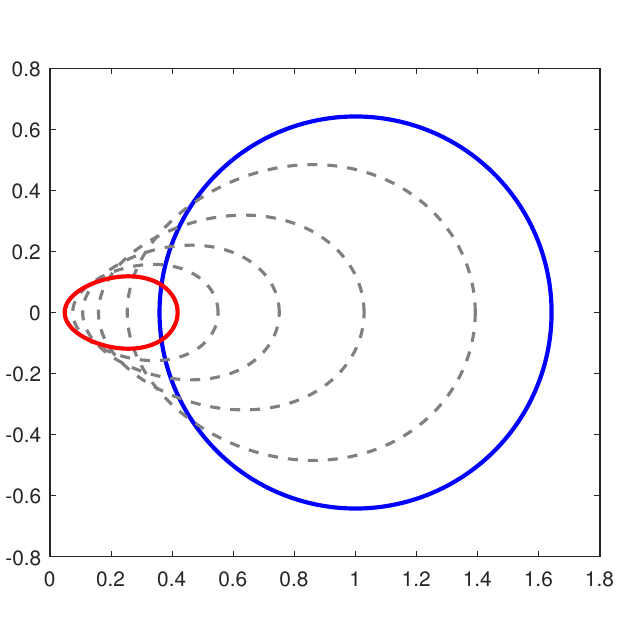}
	\includegraphics[width=0.45\textwidth]{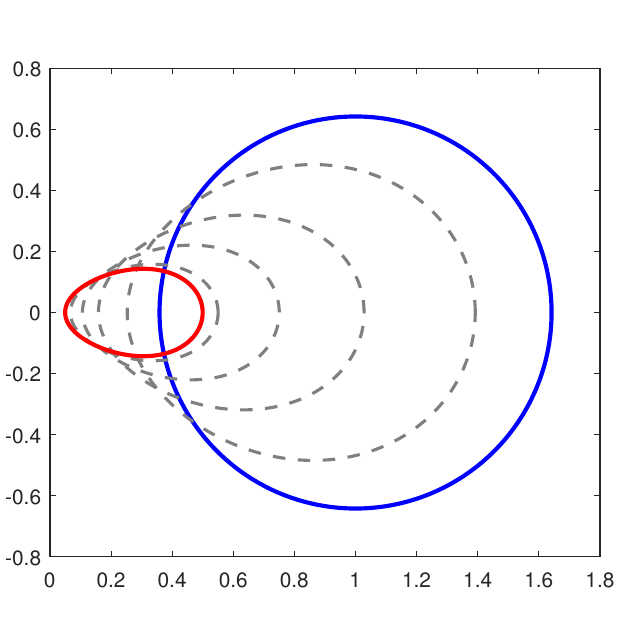}
	\caption{Evolution of a torus using the CN method with \( r=0.64151 \) (top left), \( r=0.6415125 \) (top right), \( r=0.641515 \) (bottom left), and \( r=0.64152 \) (bottom right). Plots are at times:  
	$t=0, 0.1, 0.2, 0.25, 0.285, 0.291$ (top left);
	$t=0, 0.1, 0.2, 0.25, 0.275, 0.287$ (top right);
	$t=0, 0.1, 0.2, 0.25, 0.29, 0.289 $ (bottom left);
	$t=0, 0.1, 0.2, 0.25, 0.275, 0.28$ (bottom right). } 
	\label{figure:Exa2_4}
\end{figure}

\begin{figure}[!htp]
	\centering
	\includegraphics[width=0.45\textwidth]{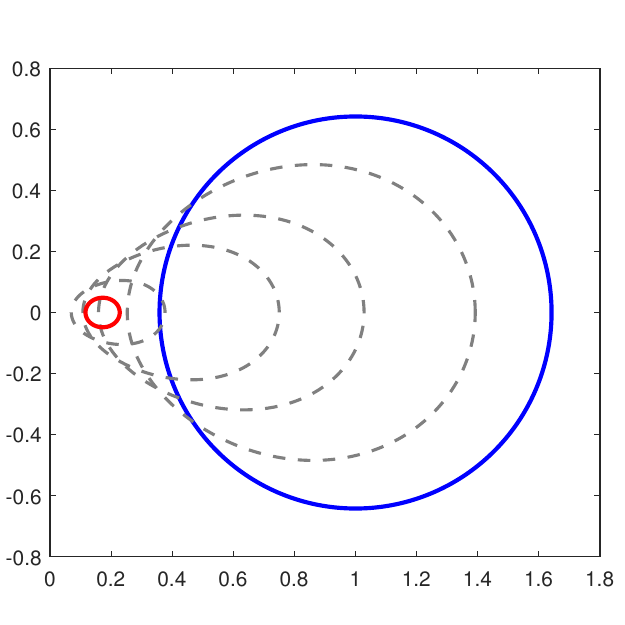}
	\includegraphics[width=0.45\textwidth]{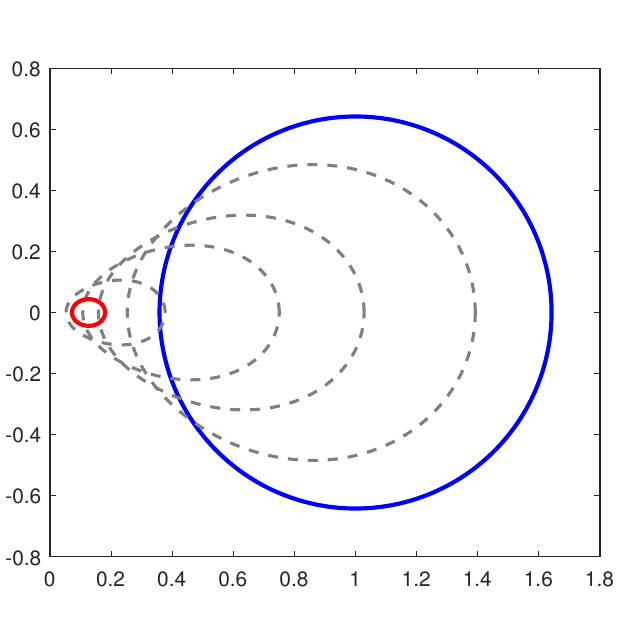}\\
	\includegraphics[width=0.45\textwidth]{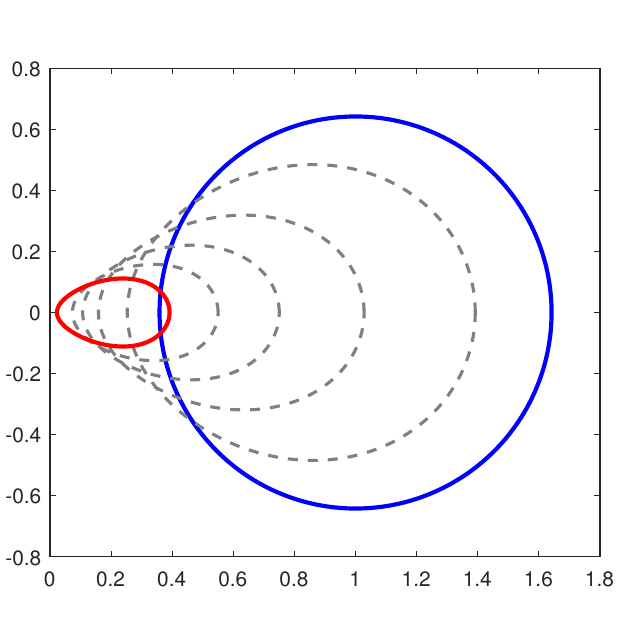}
	\includegraphics[width=0.45\textwidth]{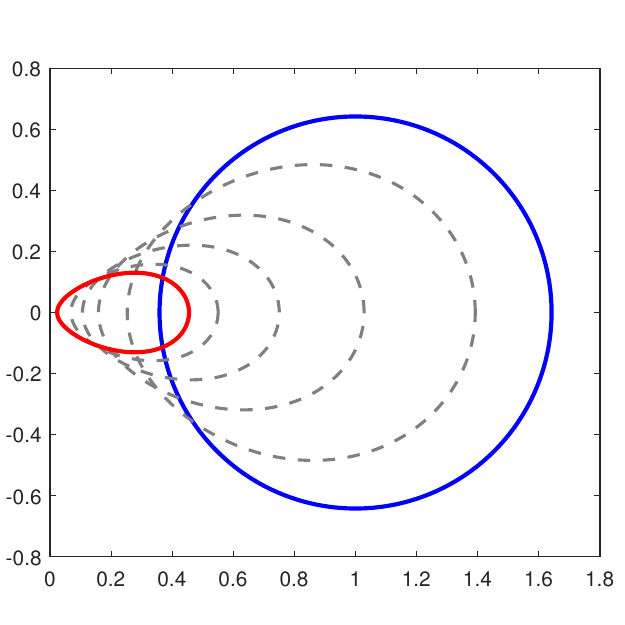}
	\caption{Evolution of a torus using the BDF2 method with \( r=0.64151 \) (top left), \( r=0.6415125 \) (top right), \( r=0.641515 \) (bottom left), and \( r=0.64152 \) (bottom right). Plots are at times:  
		$t=0, 0.1, 0.2, 0.25, 0.29, 0.3$ (top left);
		$t=0, 0.1, 0.2, 0.25, 0.275, 0.289$ (top right);
		$t=0, 0.1, 0.2, 0.25, 0.29, 0.298 $ (bottom left);
		$t=0, 0.1, 0.2, 0.25, 0.275, 0.284$ (bottom right).}
	\label{figure:Exa2_5}
\end{figure}

\end{itemize}
\end{exa}

\begin{exa}
	In this example, we examine the mean curvature flow of a genus-1 surface, generated from the initial data parameterizing a closed spiral.
We initially employ the CN method for computation. As illustrated in Fig. \ref{figure:Exa3_1}, the spiral gradually untangles, leading the surface to contract into a torus before eventually shrinking into a circle. To further explore this behavior, we increase the number of spiral layers and apply the BDF2 method, observing the same phenomenon (see Fig. \ref{figure:Exa3_2}). For this experiment we use the discretization parameters $J=512$ and $\Delta t = 10^{-4}$. 

\begin{figure}[!htp]
	\centering
	\includegraphics[width=0.85\textwidth]{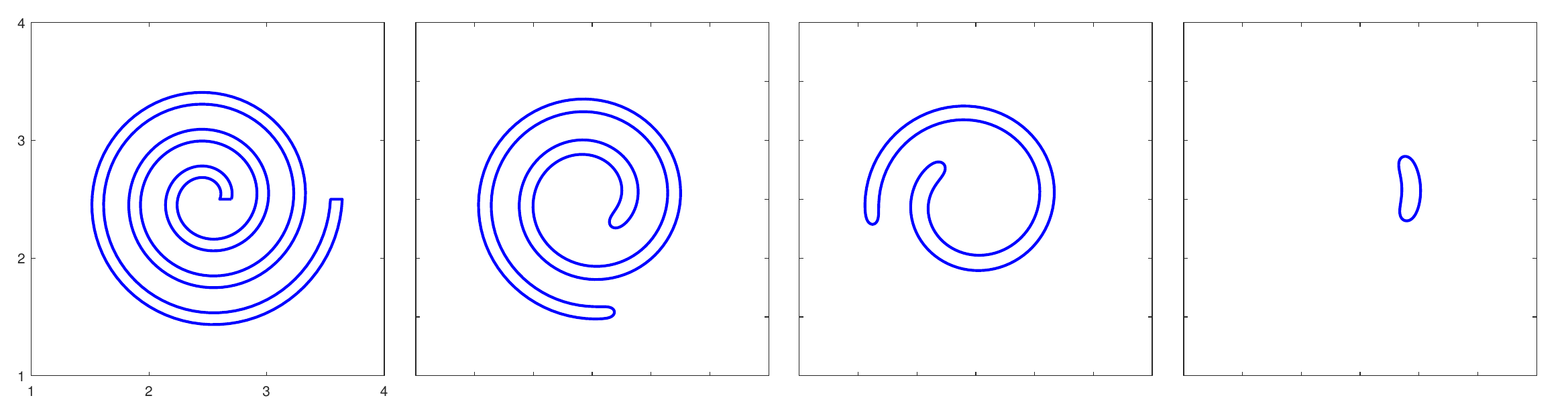}\\
	\includegraphics[width=0.85\textwidth]{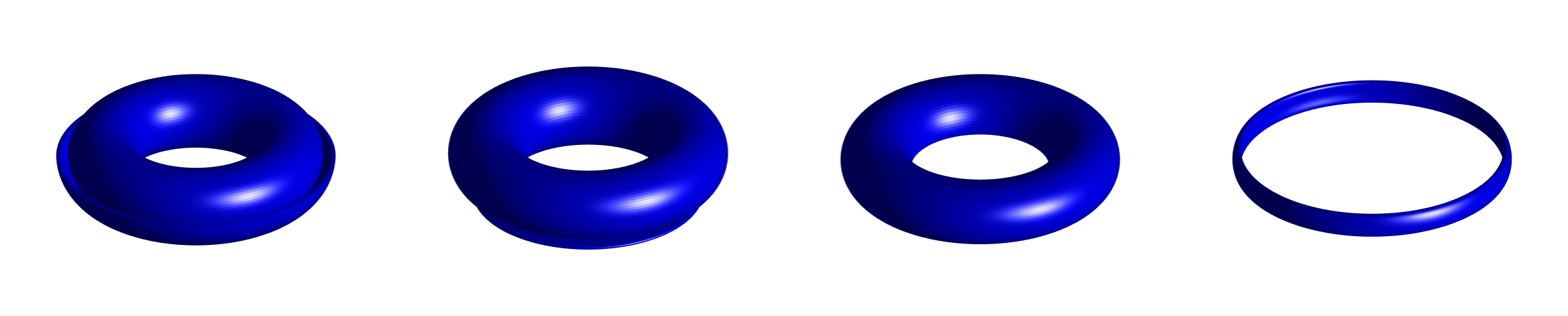}
	\caption{Evolution for a genus-1 surface generated by a spiral, with the use of the CN method. Plots are at times t = 0, 0.05, 0.1, 0.18. Below we visualize the
		axisymmetric surfaces generated by the curves.}
	\label{figure:Exa3_1}

\end{figure}

\begin{figure}[!htp]
	\centering
	\includegraphics[width=0.85\textwidth]{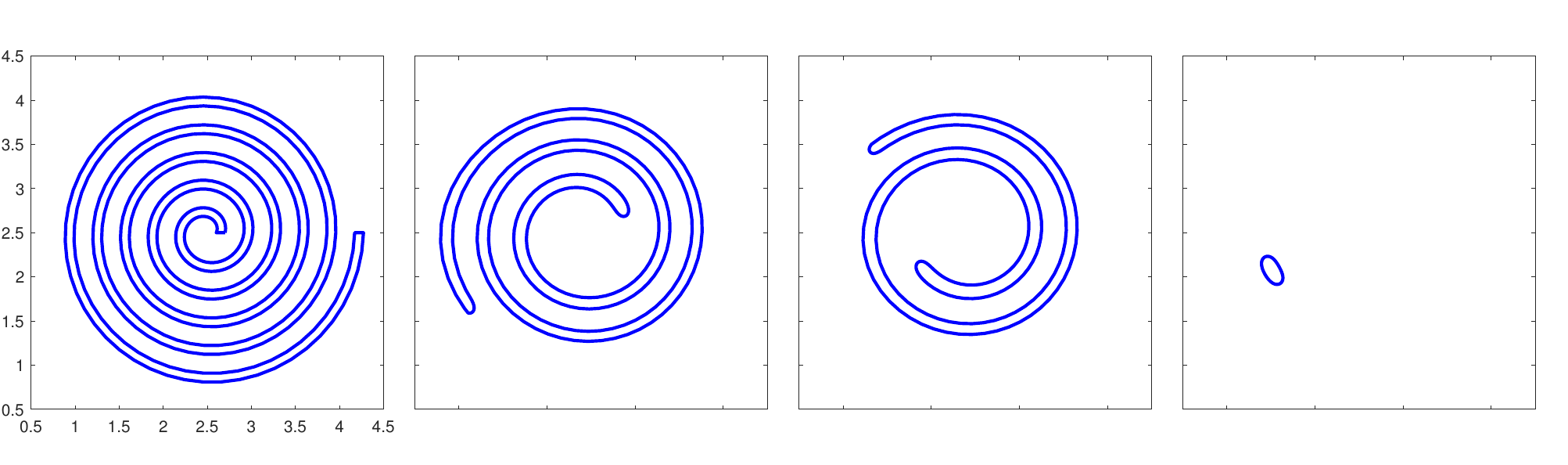}\\
	\includegraphics[width=0.85\textwidth]{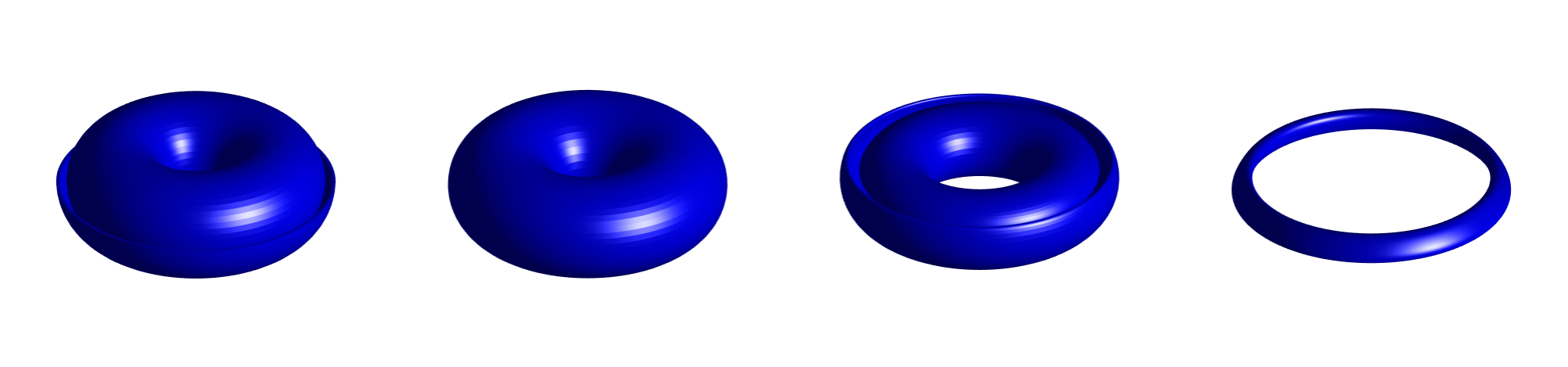}
	\caption{Evolution for a genus-1 surface generated by a spiral, with the use of the BDF2 method. Plots are at times t = 0, 0.2, 0.3, 0.54. Below we visualize the
		axisymmetric surfaces generated by the curves.}
	\label{figure:Exa3_2}
\end{figure}
\end{exa}

\begin{exa}
We conclude our example by comparing the method presented in this paper with second-order approaches based on the BGN-type method. Specifically, we compare our methods with the CN-BGN method and the BDF2-BGN method, both of which are based on the variational formulation presented in \cite[(2.19)]{Barrett2019variational}. The initial mesh will be chosen as an ellipse: \( x = 5 + \cos (2\pi\rho) \), \( y = \sin (2\pi\rho) \), and \( J = 128 \), \( \Delta t = 10^{-2} \). The comparison results are plotted in Figs. \ref{figure:Exa4_1}-\ref{figure:Exa4_2}. It is evident that the CN method and the BDF2 method proposed in this paper exhibit certain mesh advantages and ensure long-term evolution stability.  
To further validate our findings, we select a more complex initial mesh (the Rose curve): \( x = 10 + (2 + \cos(12\pi\rho))\cos(2\pi\rho) \), \( y = (2 + \cos(12\pi\rho))\sin(2\pi\rho) \). Its evolution also demonstrates the significant mesh advantages of our methods, as shown in Figs. \ref{figure:Exa4_3}-\ref{figure:Exa4_4}.

\begin{figure}[!htp]
	\centering
	\includegraphics[width=0.85\textwidth]{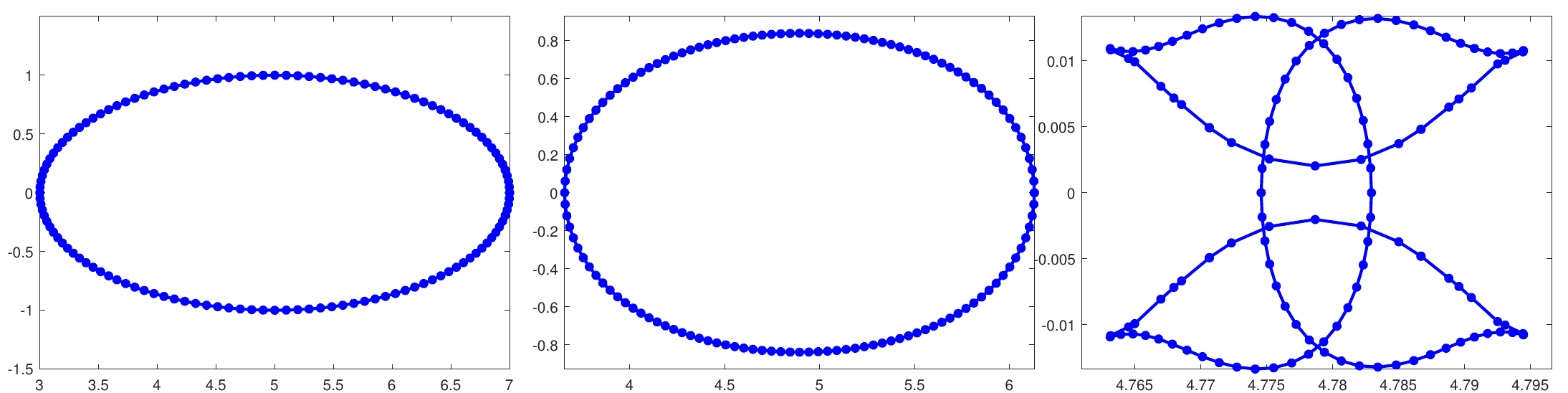}\\
	\includegraphics[width=0.85\textwidth]{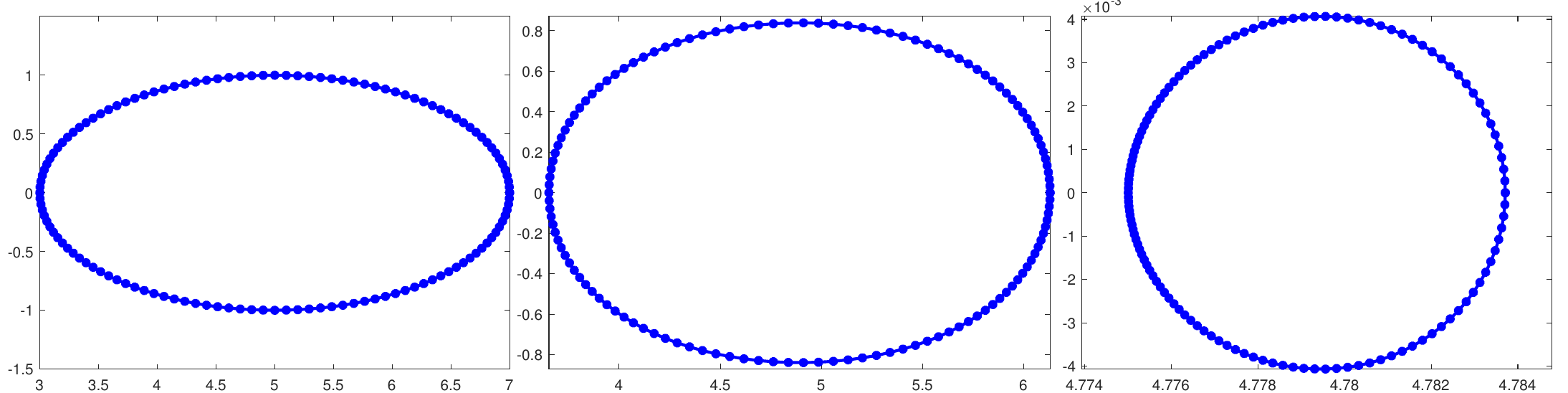}
	\caption{Evolution for a genus-1 surface generated by an ellipse, with the use of the CN-BGN method and the CN method. Plots are at times t = 0, 0.8, 1.03.}
	\label{figure:Exa4_1}
\end{figure}

\begin{figure}[!htp]
	\centering
	\includegraphics[width=0.85\textwidth]{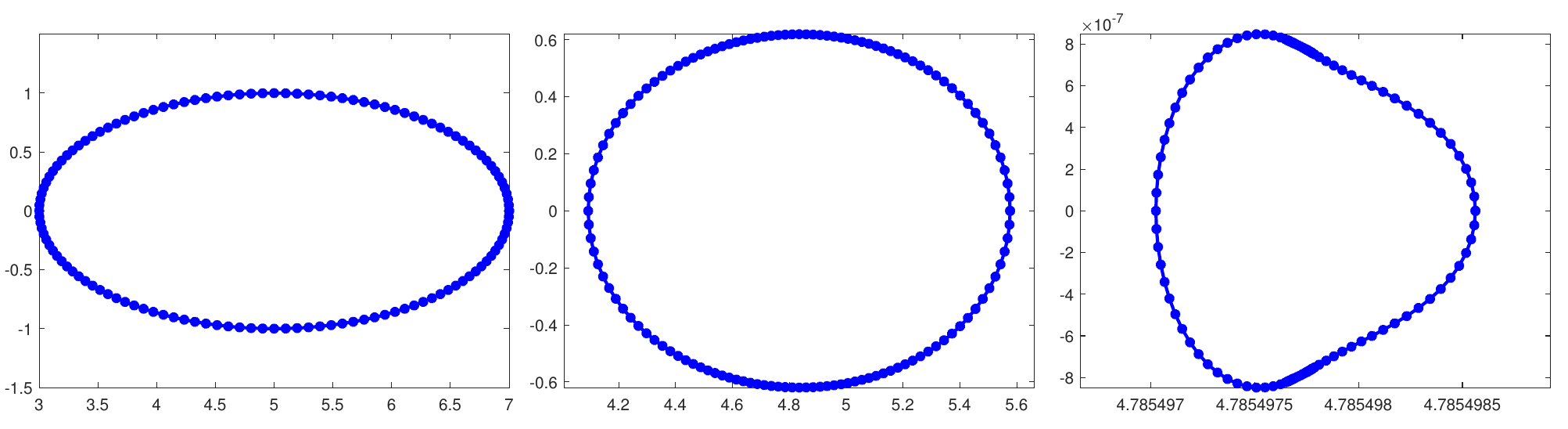}\\
	\includegraphics[width=0.85\textwidth]{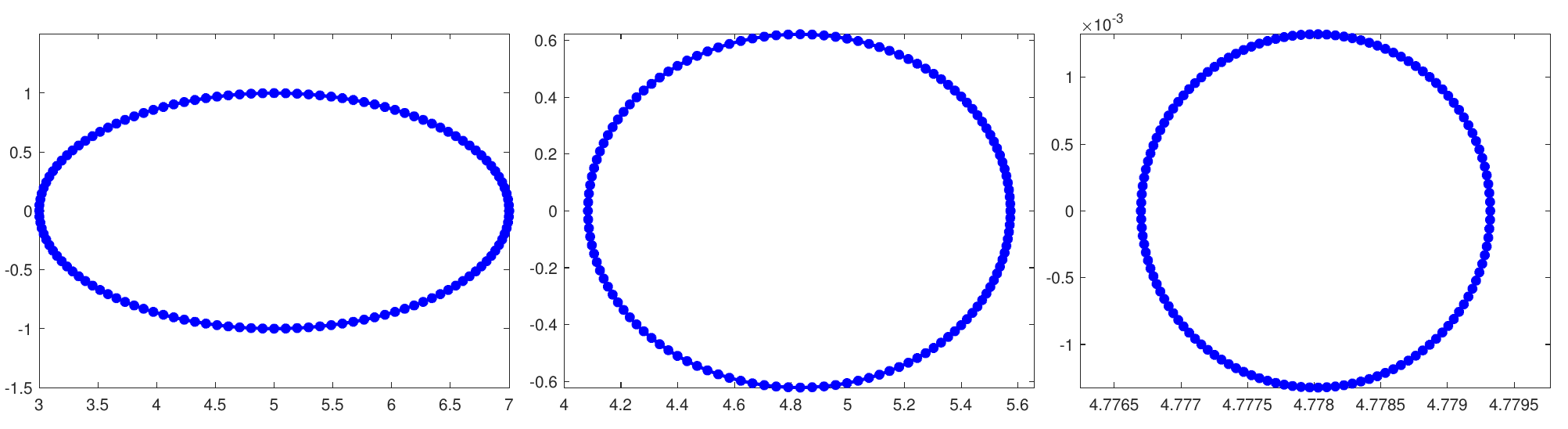}
	\caption{Evolution for a genus-1 surface generated by an ellipse, with the use of the BDF2-BGN method and the BDF2 method. Plots are at times t = 0, 0.8, 1.05.}
	\label{figure:Exa4_2}
\end{figure}

\begin{figure}[!htp]
	\centering
	\includegraphics[width=0.85\textwidth]{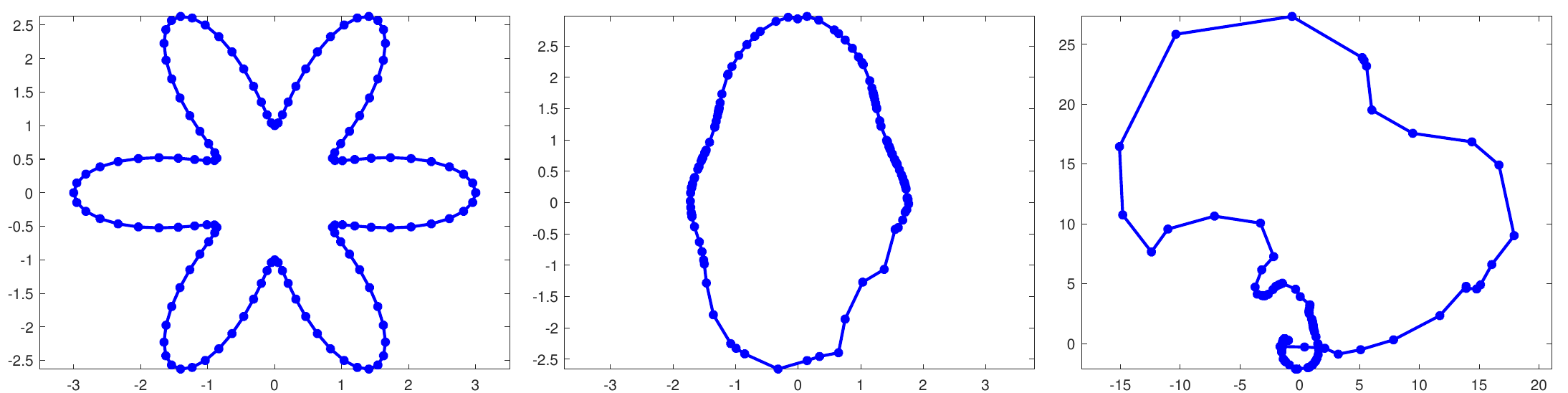}\\
	\includegraphics[width=0.85\textwidth]{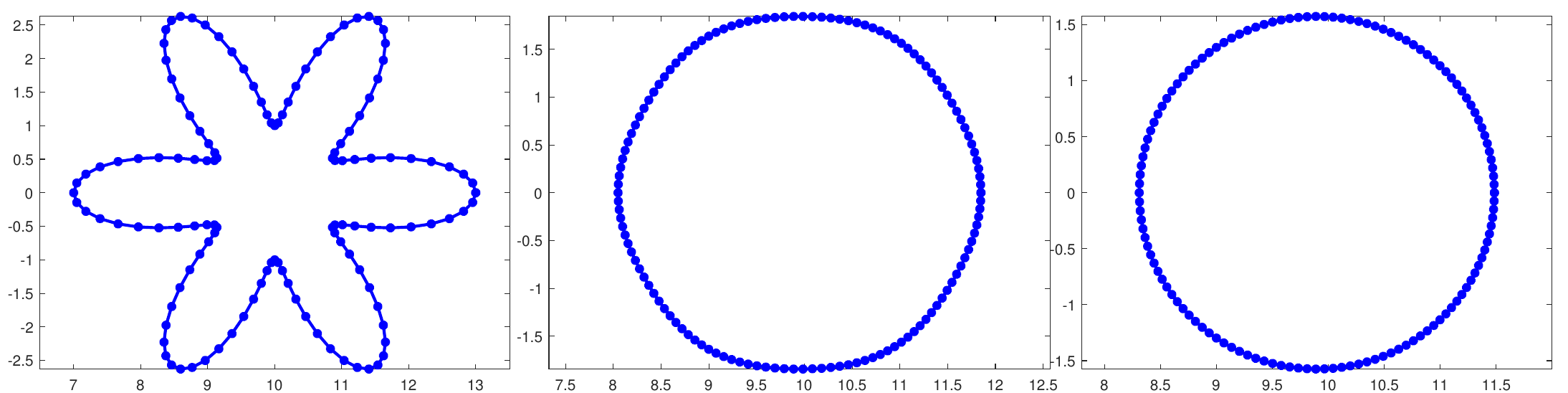}
	\caption{Evolution for a genus-1 surface generated by the Rose curve, with the use of the CN-BGN method and the CN method. Plots are at times t = 0, 0.5, 1.}
	\label{figure:Exa4_3}
\end{figure}

\begin{figure}[!htp]
	\centering
	\includegraphics[width=0.85\textwidth]{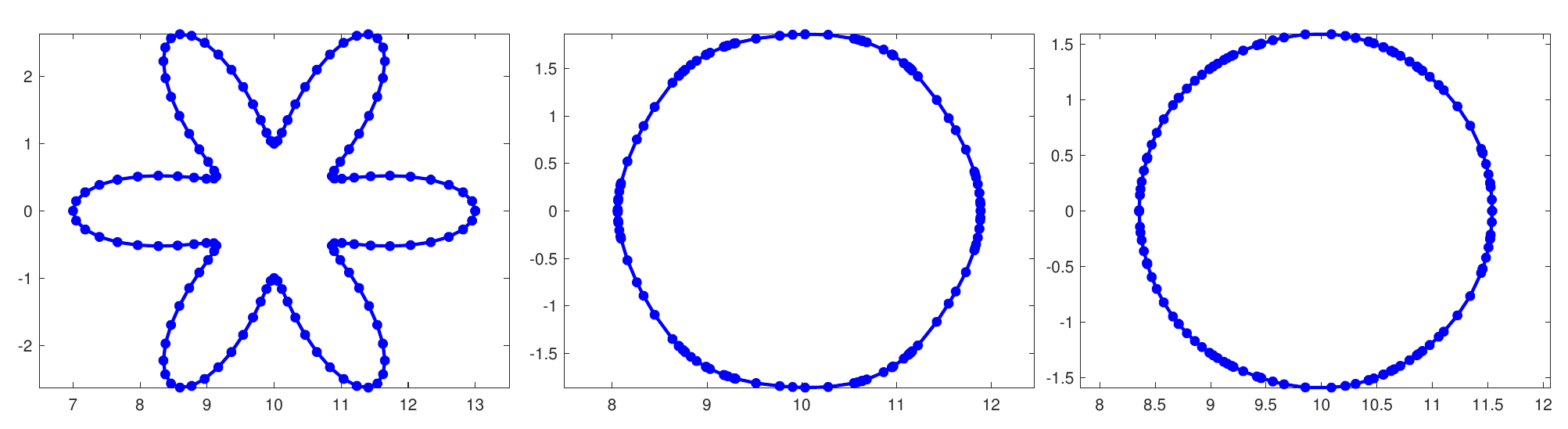}\\
	\includegraphics[width=0.85\textwidth]{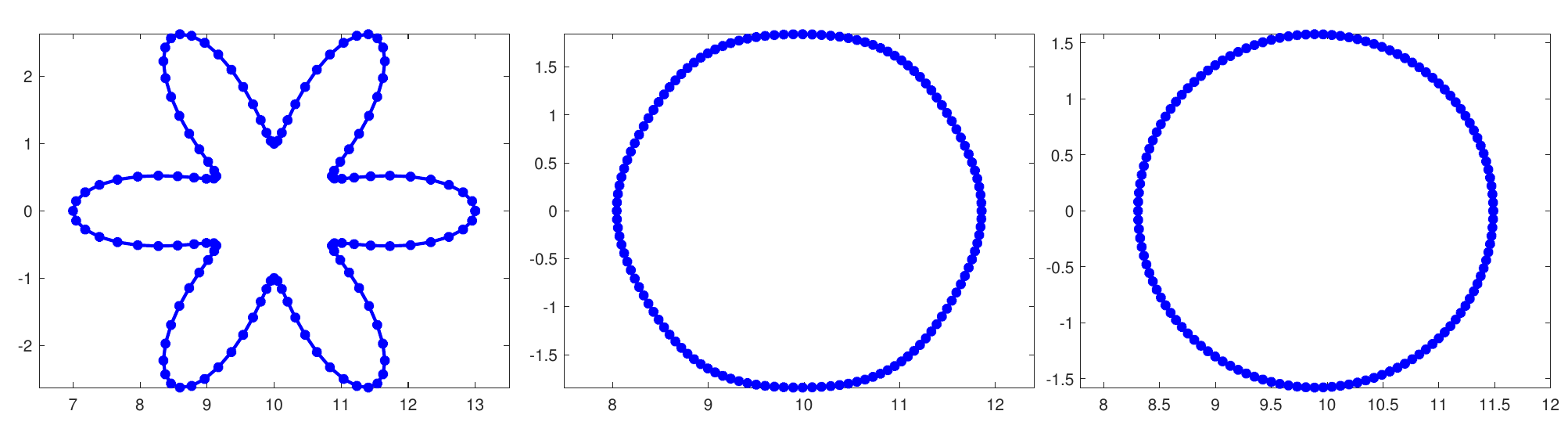}
	\caption{Evolution for a genus-1 surface generated by the Rose curve, with the use of the BDF2-BGN method and the BDF2 method. Plots are at times t = 0, 0.5, 1.}
	\label{figure:Exa4_4}
\end{figure}

\end{exa}

\section{Conclusions}\label{sec6}
In this work, we conduct an error analysis of the parametric finite element approximations for genus-1 axisymmetric mean curvature flow using two classical second-order temporal methods: the Crank-Nicolson method and the BDF2 method.  
Our results establish optimal error bounds in both the \(L^2\)-norm and \(H^1\)-norm, as well as a superconvergence result in the \(H^1\)-norm for fully discrete approximations.  
To validate our theoretical findings, we conduct convergence experiments for both the CN and BDF2 methods. Additionally, we present numerical simulations on various genus-1 surfaces, demonstrating the practical applicability of our approach. Comparisons further reveal that the second-order time-stepping schemes employed in this study offer significant advantages in mesh quality.
Our study highlights the advantages of using higher-order temporal schemes in simulating the mean curvature flow with axisymmetric structure, and provides a foundation for future research on efficient and accurate numerical methods for complex geometric evolution equations. 
In future work, we plan to extend our approach to the curvature flows with more general boundary conditions, enhance computational efficiency and robustness, and develop high-order structure-preserving temporal algorithms.

\bibliographystyle{model1b-num-names}
\bibliography{thebibx}

\begin{thebibliography}{38}
\expandafter\ifx\csname natexlab\endcsname\relax\def\natexlab#1{#1}\fi
\providecommand{\bibinfo}[2]{#2}
\ifx\xfnm\relax \def\xfnm[#1]{\unskip,\space#1}\fi
\bibitem[{Ahara(1991)}]{ahara1991mean}
\bibinfo{author}{K.~Ahara}, \bibinfo{title}{On the mean curvature flow of
  "thin" doughnuts}, \bibinfo{journal}{Lecture Notes Numer. Appl. Anal.}
  (\bibinfo{year}{1991}) \bibinfo{pages}{91--14}.
\bibitem[{Bai and Li(2023)}]{bai2023erratum}
\bibinfo{author}{G.~Bai}, \bibinfo{author}{B.~Li}, \bibinfo{title}{Erratum:
  Convergence of {D}ziuk’s semidiscrete finite element method for mean
  curvature flow of closed surfaces with high-order finite elements},
  \bibinfo{journal}{SIAM J. Numer. Anal.} \bibinfo{volume}{61}
  (\bibinfo{year}{2023}) \bibinfo{pages}{1609--1612}.
\bibitem[{Bai and Li(2024)}]{bai2024convergence}
\bibinfo{author}{G.~Bai}, \bibinfo{author}{B.~Li}, \bibinfo{title}{Convergence
  of a stabilized parametric finite element method of the
  {Barrett--Garcke--N{\"u}rnberg} type for curve shortening flow},
  \bibinfo{journal}{Math. Comput.}  (\bibinfo{year}{2024}).
\bibitem[{Bao et~al.(2022)Bao, Garcke, N{\"u}rnberg and Zhao}]{BGNZ21volume}
\bibinfo{author}{W.~Bao}, \bibinfo{author}{H.~Garcke},
  \bibinfo{author}{R.~N{\"u}rnberg}, \bibinfo{author}{Q.~Zhao},
  \bibinfo{title}{Volume-preserving parametric finite element methods for
  axisymmetric geometric evolution equations}, \bibinfo{journal}{J. Comput.
  Phys.} \bibinfo{volume}{460} (\bibinfo{year}{2022}) \bibinfo{pages}{111180}.
\bibitem[{Barrett et~al.(2021{\natexlab{a}})Barrett, Deckelnick and
  N{\"u}rnberg}]{barrett2021finite}
\bibinfo{author}{J.W. Barrett}, \bibinfo{author}{K.~Deckelnick},
  \bibinfo{author}{R.~N{\"u}rnberg}, \bibinfo{title}{A finite element error
  analysis for axisymmetric mean curvature flow}, \bibinfo{journal}{IMA J.
  Numer. Anal.} \bibinfo{volume}{41} (\bibinfo{year}{2021}{\natexlab{a}})
  \bibinfo{pages}{1641--1667}.
\bibitem[{Barrett et~al.(2008)Barrett, Garcke and N{\"u}rnberg}]{Barrett08JCP}
\bibinfo{author}{J.W. Barrett}, \bibinfo{author}{H.~Garcke},
  \bibinfo{author}{R.~N{\"u}rnberg}, \bibinfo{title}{On the parametric finite
  element approximation of evolving hypersurfaces in $\mathbb{R}^3$},
  \bibinfo{journal}{J. Comput. Phys.} \bibinfo{volume}{227}
  (\bibinfo{year}{2008}) \bibinfo{pages}{4281--4307}.
\bibitem[{Barrett et~al.(2019{\natexlab{a}})Barrett, Garcke and
  N{\"u}rnberg}]{Barrett19}
\bibinfo{author}{J.W. Barrett}, \bibinfo{author}{H.~Garcke},
  \bibinfo{author}{R.~N{\"u}rnberg}, \bibinfo{title}{Finite element methods for
  fourth order axisymmetric geometric evolution equations},
  \bibinfo{journal}{J. Comput. Phys.} \bibinfo{volume}{376}
  (\bibinfo{year}{2019}{\natexlab{a}}) \bibinfo{pages}{733--766}.
\bibitem[{Barrett et~al.(2019{\natexlab{b}})Barrett, Garcke and
  N{\"u}rnberg}]{Barrett2019variational}
\bibinfo{author}{J.W. Barrett}, \bibinfo{author}{H.~Garcke},
  \bibinfo{author}{R.~N{\"u}rnberg}, \bibinfo{title}{Variational discretization
  of axisymmetric curvature flows}, \bibinfo{journal}{Numer. Math.}
  \bibinfo{volume}{141} (\bibinfo{year}{2019}{\natexlab{b}})
  \bibinfo{pages}{791--837}.
\bibitem[{Barrett et~al.(2021{\natexlab{b}})Barrett, Garcke and
  N{\"u}rnberg}]{Barrett2021stable}
\bibinfo{author}{J.W. Barrett}, \bibinfo{author}{H.~Garcke},
  \bibinfo{author}{R.~N{\"u}rnberg}, \bibinfo{title}{Stable approximations for
  axisymmetric {Willmore} flow for closed and open surfaces},
  \bibinfo{journal}{ESAIM: Math. Mod. Numer. Anal.} \bibinfo{volume}{55}
  (\bibinfo{year}{2021}{\natexlab{b}}) \bibinfo{pages}{833--885}.
\bibitem[{Bartels(2013)}]{bartels2013simple}
\bibinfo{author}{S.~Bartels}, \bibinfo{title}{A simple scheme for the
  approximation of the elastic flow of inextensible curves},
  \bibinfo{journal}{IMA J. Numer. Anal.} \bibinfo{volume}{33}
  (\bibinfo{year}{2013}) \bibinfo{pages}{1115--1125}.
\bibitem[{Bernoff et~al.(1998)Bernoff, Bertozzi and Witelski}]{Bernoff1998}
\bibinfo{author}{A.J. Bernoff}, \bibinfo{author}{A.L. Bertozzi},
  \bibinfo{author}{T.P. Witelski}, \bibinfo{title}{Axisymmetric surface
  diffusion: dynamics and stability of self-similar pinchoff},
  \bibinfo{journal}{J. Stat. Phys.} \bibinfo{volume}{93} (\bibinfo{year}{1998})
  \bibinfo{pages}{725--776}.
\bibitem[{Deckelnick and Dziuk(1995)}]{deckelnick1995approximation}
\bibinfo{author}{K.~Deckelnick}, \bibinfo{author}{G.~Dziuk}, \bibinfo{title}{On
  the approximation of the curve shortening flow}, \bibinfo{journal}{Pitman
  Research Notes in Mathematics Series}  (\bibinfo{year}{1995})
  \bibinfo{pages}{100--100}.
\bibitem[{Deckelnick and Dziuk(2009)}]{deckelnick2009error}
\bibinfo{author}{K.~Deckelnick}, \bibinfo{author}{G.~Dziuk},
  \bibinfo{title}{Error analysis for the elastic flow of parametrized curves},
  \bibinfo{journal}{Math. Comput.} \bibinfo{volume}{78} (\bibinfo{year}{2009})
  \bibinfo{pages}{645--671}.
\bibitem[{Deckelnick et~al.(2003)Deckelnick, Dziuk and Elliott}]{Deckelnick03}
\bibinfo{author}{K.~Deckelnick}, \bibinfo{author}{G.~Dziuk},
  \bibinfo{author}{C.M. Elliott}, \bibinfo{title}{Error analysis of a
  semidiscrete numerical scheme for diffusion in axially symmetric surfaces},
  \bibinfo{journal}{SIAM J. Numer. Anal.} \bibinfo{volume}{41}
  (\bibinfo{year}{2003}) \bibinfo{pages}{2161--2179}.
\bibitem[{Deckelnick and N{\"u}rnberg(2021)}]{deckelnick2021error}
\bibinfo{author}{K.~Deckelnick}, \bibinfo{author}{R.~N{\"u}rnberg},
  \bibinfo{title}{Error analysis for a finite difference scheme for
  axisymmetric mean curvature flow of genus-0 surfaces}, \bibinfo{journal}{SIAM
  J. Numer. Anal.} \bibinfo{volume}{59} (\bibinfo{year}{2021})
  \bibinfo{pages}{2698--2721}.
\bibitem[{Deckelnick and N{\"u}rnberg(2025)}]{deckelnick2025second}
\bibinfo{author}{K.~Deckelnick}, \bibinfo{author}{R.~N{\"u}rnberg},
  \bibinfo{title}{Second order in time finite element schemes for curve
  shortening flow and curve diffusion}, \bibinfo{journal}{arXiv preprint
  arXiv:2502.19277}  (\bibinfo{year}{2025}).
\bibitem[{Dziuk(1990)}]{Dziuk90}
\bibinfo{author}{G.~Dziuk}, \bibinfo{title}{An algorithm for evolutionary
  surfaces}, \bibinfo{journal}{Numer. Math.} \bibinfo{volume}{58}
  (\bibinfo{year}{1990}) \bibinfo{pages}{603--611}.
\bibitem[{Dziuk(1994)}]{dziuk1994convergence}
\bibinfo{author}{G.~Dziuk}, \bibinfo{title}{Convergence of a semi-discrete
  scheme for the curve shortening flow}, \bibinfo{journal}{Math. Mod. Meth.
  Appl. S.} \bibinfo{volume}{4} (\bibinfo{year}{1994})
  \bibinfo{pages}{589--606}.
\bibitem[{Dziuk and Elliott(2007)}]{dziuk2007finite}
\bibinfo{author}{G.~Dziuk}, \bibinfo{author}{C.M. Elliott},
  \bibinfo{title}{Finite elements on evolving surfaces}, \bibinfo{journal}{IMA
  J. Numer. Anal.} \bibinfo{volume}{27} (\bibinfo{year}{2007})
  \bibinfo{pages}{262--292}.
\bibitem[{Elliott et~al.(2022)Elliott, Garcke and
  Kov{\'a}cs}]{elliott2022numerical}
\bibinfo{author}{C.M. Elliott}, \bibinfo{author}{H.~Garcke},
  \bibinfo{author}{B.~Kov{\'a}cs}, \bibinfo{title}{Numerical analysis for the
  interaction of mean curvature flow and diffusion on closed surfaces},
  \bibinfo{journal}{Numer. Math.} \bibinfo{volume}{151} (\bibinfo{year}{2022})
  \bibinfo{pages}{873--925}.
\bibitem[{Guo and Li(2025)}]{guo2025structure}
\bibinfo{author}{Y.~Guo}, \bibinfo{author}{M.~Li},
  \bibinfo{title}{Structure-preserving parametric finite element methods for
  anisotropic surface diffusion flow with minimal deformation formulation},
  \bibinfo{journal}{arXiv preprint arXiv:2501.12638}  (\bibinfo{year}{2025}).
\bibitem[{Hu and Li(2022)}]{Hu22evolving}
\bibinfo{author}{J.~Hu}, \bibinfo{author}{B.~Li}, \bibinfo{title}{Evolving
  finite element methods with an artificial tangential velocity for mean
  curvature flow and {Willmore} flow}, \bibinfo{journal}{Numer. Math.}
  \bibinfo{volume}{152} (\bibinfo{year}{2022}) \bibinfo{pages}{127--181}.
\bibitem[{Ishimura(1993)}]{ishimura1993limit}
\bibinfo{author}{N.~Ishimura}, \bibinfo{title}{Limit shape of the cross-section
  of shrinking doughnuts}, \bibinfo{journal}{J. Math. Soc. Jpn.}
  \bibinfo{volume}{45} (\bibinfo{year}{1993}) \bibinfo{pages}{569--582}.
\bibitem[{Jiang et~al.(2024{\natexlab{a}})Jiang, Su and
  Zhang}]{jiang2024second}
\bibinfo{author}{W.~Jiang}, \bibinfo{author}{C.~Su},
  \bibinfo{author}{G.~Zhang}, \bibinfo{title}{A second-order in time,
  {BGN}-based parametric finite element method for geometric flows of curves},
  \bibinfo{journal}{J. Comput. Phys.} \bibinfo{volume}{514}
  (\bibinfo{year}{2024}{\natexlab{a}}) \bibinfo{pages}{113220}.
\bibitem[{Jiang et~al.(2024{\natexlab{b}})Jiang, Su and Zhang}]{Jiang24}
\bibinfo{author}{W.~Jiang}, \bibinfo{author}{C.~Su},
  \bibinfo{author}{G.~Zhang}, \bibinfo{title}{Stable backward differentiation
  formula time discretization of {BGN}-based parametric finite element methods
  for geometric flows}, \bibinfo{journal}{SIAM J. Sci. Comput.}
  \bibinfo{volume}{46} (\bibinfo{year}{2024}{\natexlab{b}})
  \bibinfo{pages}{A2874--A2898}.
\bibitem[{Kov{\'a}cs et~al.(2019)Kov{\'a}cs, Li and Lubich}]{Kovacs2019}
\bibinfo{author}{B.~Kov{\'a}cs}, \bibinfo{author}{B.~Li},
  \bibinfo{author}{C.~Lubich}, \bibinfo{title}{A convergent evolving finite
  element algorithm for mean curvature flow of closed surfaces},
  \bibinfo{journal}{Numer. Math.} \bibinfo{volume}{143} (\bibinfo{year}{2019})
  \bibinfo{pages}{797--853}.
\bibitem[{Kov{\'a}cs et~al.(2021)Kov{\'a}cs, Li and
  Lubich}]{kovacs2021convergent}
\bibinfo{author}{B.~Kov{\'a}cs}, \bibinfo{author}{B.~Li},
  \bibinfo{author}{C.~Lubich}, \bibinfo{title}{A convergent evolving finite
  element algorithm for {W}illmore flow of closed surfaces},
  \bibinfo{journal}{Numer. Math.} \bibinfo{volume}{149} (\bibinfo{year}{2021})
  \bibinfo{pages}{595--643}.
\bibitem[{Kov{\'a}cs et~al.(2017)Kov{\'a}cs, Li, Lubich and
  Power~Guerra}]{kovacs2017convergence}
\bibinfo{author}{B.~Kov{\'a}cs}, \bibinfo{author}{B.~Li},
  \bibinfo{author}{C.~Lubich}, \bibinfo{author}{C.A. Power~Guerra},
  \bibinfo{title}{Convergence of finite elements on an evolving surface driven
  by diffusion on the surface}, \bibinfo{journal}{Numer. Math.}
  \bibinfo{volume}{137} (\bibinfo{year}{2017}) \bibinfo{pages}{643--689}.
\bibitem[{Li(2020)}]{li2020convergence}
\bibinfo{author}{B.~Li}, \bibinfo{title}{Convergence of {D}ziuk's linearly
  implicit parametric finite element method for curve shortening flow},
  \bibinfo{journal}{SIAM J. Numer. Anal.} \bibinfo{volume}{58}
  (\bibinfo{year}{2020}) \bibinfo{pages}{2315--2333}.
\bibitem[{Li(2021)}]{li2021convergence}
\bibinfo{author}{B.~Li}, \bibinfo{title}{Convergence of {D}ziuk's semidiscrete
  finite element method for mean curvature flow of closed surfaces with
  high-order finite elements}, \bibinfo{journal}{SIAM J. Numer. Anal.}
  \bibinfo{volume}{59} (\bibinfo{year}{2021}) \bibinfo{pages}{1592--1617}.
\bibitem[{Li(2025)}]{li2025error}
\bibinfo{author}{M.~Li}, \bibinfo{title}{Error analysis of finite element
  approximation for mean curvature flows in axisymmetric geometry},
  \bibinfo{journal}{J. Sci. Comput.} \bibinfo{volume}{102}
  (\bibinfo{year}{2025}) \bibinfo{pages}{88}.
\bibitem[{Li et~al.(2024)Li, Guo and Bi}]{li2024efficient}
\bibinfo{author}{M.~Li}, \bibinfo{author}{Y.~Guo}, \bibinfo{author}{J.~Bi},
  \bibinfo{title}{Efficient energy-stable parametric finite element methods for
  surface diffusion flow and applications in solid-state dewetting},
  \bibinfo{journal}{arXiv preprint arXiv:2407.09418}  (\bibinfo{year}{2024}).
\bibitem[{M.~Elliott and Fritz(2017)}]{m2017approximations}
\bibinfo{author}{C.~M.~Elliott}, \bibinfo{author}{H.~Fritz}, \bibinfo{title}{On
  approximations of the curve shortening flow and of the mean curvature flow
  based on the {D}eturck trick}, \bibinfo{journal}{IMA J. Numer. Anal.}
  \bibinfo{volume}{37} (\bibinfo{year}{2017}) \bibinfo{pages}{543--603}.
\bibitem[{Mantegazza(2011)}]{mantegazza2011lecture}
\bibinfo{author}{C.~Mantegazza}, \bibinfo{title}{Lecture notes on mean
  curvature flow}, volume \bibinfo{volume}{290}, \bibinfo{publisher}{Springer
  Science \& Business Media}, \bibinfo{year}{2011}.
\bibitem[{Paolini et~al.(1992)Paolini, Verdi et~al.}]{paolini1992asymptotic}
\bibinfo{author}{M.~Paolini}, \bibinfo{author}{C.~Verdi}, et~al.,
  \bibinfo{title}{Asymptotic and numerical analyses of the mean curvature flow
  with a space-dependent relaxation parameter}, \bibinfo{journal}{Asymptotic
  Anal.} \bibinfo{volume}{5} (\bibinfo{year}{1992}) \bibinfo{pages}{553--574}.
\bibitem[{Soner and Souganidis(1993)}]{soner1993singularities}
\bibinfo{author}{H.M. Soner}, \bibinfo{author}{P.E. Souganidis},
  \bibinfo{title}{Singularities and uniqueness of cylindrically symmetric
  surfaces moving by mean curvature}, \bibinfo{journal}{Commun. Partial Differ.
  Equations} \bibinfo{volume}{18} (\bibinfo{year}{1993})
  \bibinfo{pages}{859--894}.
\bibitem[{Ye and Cui(2021)}]{ye2021convergence}
\bibinfo{author}{C.~Ye}, \bibinfo{author}{J.~Cui}, \bibinfo{title}{Convergence
  of {D}ziuk's fully discrete linearly implicit scheme for curve shortening
  flow}, \bibinfo{journal}{SIAM J. Numer. Anal.} \bibinfo{volume}{59}
  (\bibinfo{year}{2021}) \bibinfo{pages}{2823--2842}.
\bibitem[{Zhao(2019)}]{Zhao19}
\bibinfo{author}{Q.~Zhao}, \bibinfo{title}{A sharp-interface model and its
  numerical approximation for solid-state dewetting with axisymmetric
  geometry}, \bibinfo{journal}{J. Comput. Appl. Math.} \bibinfo{volume}{361}
  (\bibinfo{year}{2019}) \bibinfo{pages}{144--156}.

\end{thebibliography}
\end{document}